\newtheorem{assumption}{Assumption}
\newtheorem{theorem}{Theorem}
\newtheorem{lemma}{Lemma}
\newtheorem{proposition}{Proposition}
\newtheorem{definition}{Definition}
\newtheorem{corollary}{Corollary}
\theoremstyle{plain}
\newtheorem{remark}{Remark}
\newcommand{\fy}[1]{{\color{black}#1}}
\newcommand{\fyy}[1]{{\color{black}#1}}
\newcommand{\far}[1]{{\color{black}#1}}
\newcommand{\ses}[1]{{\color{black}#1}}
\newcommand{\ssrtwo}[1]{{\color{black}#1}}
\crefname{section}{section}{sections}
\crefname{subsection}{subsection}{subsections}
\Crefname{section}{Section}{Sections}
\Crefname{subsection}{Subsection}{Subsections}
\Crefname{figure}{Figure}{Figures}
\title{\LARGE \bf
Improved guarantees for optimal Nash equilibrium seeking and bilevel variational inequalities
}
\author{Sepideh Samadi\thanks{PhD student in the Department of Industrial and Systems Engineering, Rutgers University, Piscataway, NJ 08854, USA  ({\tt\small{sepideh.samadi@rutgers.edu}}).}
\and Farzad Yousefian\thanks{Assistant Professor in the Department of Industrial and Systems Engineering, Rutgers University, Piscataway, NJ 08854, USA
  ({\tt\small{farzad.yousefian@rutgers.edu}}).
 This work was funded in part by the NSF under CAREER grant ECCS-$1944500$, in part by the ONR under grant N$00014$-$22$-$1$-$2757$, and in part by the DOE under grant DE-SC$0023303$.}
}
\begin{document}
\sloppy
\maketitle
\thispagestyle{empty}
\pagestyle{plain}
\maketitle
\begin{abstract}We consider a class of hierarchical variational inequality (VI) problems that subsumes VI-constrained optimization and several other problem classes including the optimal solution selection problem \far{and} the optimal Nash equilibrium (NE) seeking problem. Our main contributions are threefold. (i) We consider bilevel VIs with \far{monotone} and Lipschitz continuous mappings and devise a single-timescale iteratively regularized extragradient method, \far{named IR-EG$_{{\texttt{m,m}}}$}. We improve the existing iteration complexity results for addressing both bilevel VI and VI-constrained convex optimization problems. (ii) Under the strong monotonicity of the outer level mapping, we develop \far{a method} named \far{IR-EG$_{{\texttt{s,m}}}$} and derive faster guarantees than those in (i). \far{We also study the iteration complexity of this method under a constant regularization parameter.} These results appear to be new for both bilevel VIs and VI-constrained optimization. (iii) To our knowledge, complexity guarantees for computing the optimal NE in nonconvex settings do not exist. Motivated by this lacuna, we consider VI-constrained nonconvex optimization problems and devise an inexactly-projected gradient method, named IPR-EG, where the projection onto the unknown set of equilibria is performed using \far{IR-EG$_{{\texttt{s,m}}}$} with \far{a} prescribed termination criterion and \far{an adaptive regularization parameter}. We obtain new complexity guarantees in terms of a residual map and an infeasibility metric for computing a stationary point. We validate the theoretical findings using preliminary numerical experiments for computing the best and the worst Nash equilibria.

\end{abstract}



\section{Introduction}\label{sec:intro}
Consider the class of optimization problems with variational inequality constraints (also known as VI-constrained optimization~\cite{facchinei2014vi,doi:10.1137/20M1357378}) of the form
\begin{align}\label{eqn:optVI} 
&\hbox{minimize}\quad f(x)\\
&\hbox{subject to}\quad x \in \mbox{SOL}(X,F),\nonumber
\end{align}
where $X \subseteq \mathbb{R}^n$ is a closed convex set, $f:X \to \mathbb{R}$ is a continuously differentiable (possibly nonconvex) function, and $F:X\to \mathbb{R}^n$ is a continuous and  \far{monotone} mapping. Here, $\mbox{SOL}(X,F)$ denotes the solution set of the variational inequality problem $\mbox{VI}(X,F)$, where $
\mbox{SOL}(X,F) \triangleq \left\{x \in X \mid F(x)\fyy{^\top}(y-x)\geq 0, \ \hbox{for all } y \in X\right\}$. One of the goals in this work is to devise a fully iterative method with complexity guarantees for addressing problem~\eqref{eqn:optVI} when $f$ is nonconvex. In addition, when $f$ is \far{convex} or strongly convex, we aim at considerably improving the existing complexity guarantees for solving this class of problems. Another goal in this work lies in improving the existing performance guarantees in solving a generalization of problem~\eqref{eqn:optVI}, \far{cast as a class of} bilevel variational inequality \far{problems},  given as follows.
\begin{align}\label{eqn:bilevelVI} 
&\hbox{Find}\quad x \in \mbox{SOL}(X^*_F,H) \qquad \hbox{where } X^*_F \triangleq \mbox{SOL}(X,F) 
\end{align}
and $H:X\to \mathbb{R}^n$ is a continuous and \far{monotone} mapping. In fact, note that when $f$ is convex and differentiable, \eqref{eqn:optVI} is an instance of~\eqref{eqn:bilevelVI} where $H(x):=\nabla f(x)$.

\subsection{Motivating problem classes}\label{subsection:motivation} Our motivation arises from some important problem classes in optimization and game theory presented as follows.

\subsubsection{Optimal solution selection problem} In optimization, the optimal solution may not be unique. When multiple optimal solutions exist, one may seek among the optimal solutions, one that minimizes a secondary metric. 
Consider 
\begin{align}\label{prob:opt_select}
&\hbox{minimize} \quad \phi(y) \\
& \hbox{subject to} \quad y \in Y^*,\notag
\end{align}
where $\phi:\mathbb{R}^p \to \mathbb{R}$ is a continuous function (possibly nonconvex) and $Y^*$ denotes the optimal solution set of the canonical constrained convex program of the form
\begin{align}\label{prob:cst_cvx}
&\hbox{minimize} \quad g(y) \\
& \hbox{subject to} \quad y \in Y, \qquad h_j(y) \leq 0, \qquad j=1,\ldots,q.\notag
\end{align}
Assume that $g:\mathbb{R}^p \to \mathbb{R}$, $h_j:\mathbb{R}^p \to \mathbb{R}$, $j=1,\ldots,q$, are continuously differentiable convex functions. Suppose the Slater condition holds (see~\cite[Assum.~6.4.2]{bertsekas2003convex}) and the set $Y \subseteq \mathbb{R}^p$ is closed and convex. Then, in view of the strong duality theorem~\cite[Prop.~6.4.3]{bertsekas2003convex}, the duality gap is zero. From the Lagrangian saddle point theorem~\cite[Prop.~6.2.4]{bertsekas2003convex}, any optimal solution-multiplier pair, denoted by $(y^*,\lambda^*)$, is a saddle-point of the Lagrangian function $\mathcal{L}:\mathbb{R}^{p+q} \to \mathbb{R}$ defined as $\mathcal{L}(y,\lambda) \triangleq g(y) + \lambda\fyy{^\top}h(y)$ where $\lambda\triangleq [\lambda_1;\ldots;\lambda_q]$ and $h(y)\triangleq [h_1(y);\ldots;h_q(y)]$. Then, problem~\eqref{prob:opt_select} can be written as a VI-constrained optimization of the form~\eqref{eqn:optVI}, where we define
\begin{align}\label{eqn:def_example1}
&x\triangleq [y;\lambda], \qquad X\triangleq Y\times \mathbb{R}^q_+, \qquad f(x)\triangleq \phi(y),\\
& F(x) \triangleq \left[\nabla_y \mathcal{L}(y,\lambda);-\nabla_\lambda \mathcal{L}(y,\lambda)\right]= \left[\nabla g(y) + \nabla h(y)\lambda;-h(y)\right], \nonumber
\end{align}
and $\nabla h(y) \triangleq [\nabla h_1(y),\ldots,\nabla h_q(y)]$. Notably, if functions $h_j$ are linear and $g$ is smooth, then $F$ is Lipschitz continuous on $X$. Further, given that $g$ and $h_j$ are convex in $y$, $F$ is \far{monotone} on $X$. \far{This is shown in Lemma~\ref{lem:example1_monotone_F}.}
\subsubsection{Optimal Nash equilibrium seeking problem} Nash games may admit multiple equilibria~\cite{papadimitriou2001algorithms}. A storied and yet, challenging question in noncooperative game theory pertains to seeking the best and the worst equilibria with respect to a given social welfare function. Addressing this question led to the emergence of efficiency estimation metrics, including Price of Anarchy (PoA) and Price of Stability (PoS), introduced in~\cite{koutsoupias1999worst,papadimitriou2001algorithms} and~\cite{anshelevich2008price}, respectively. Let us first formally recall the standard Nash equilibrium problem (NEP) and then, show how problem~\eqref{eqn:optVI} captures the optimal \fyy{equilibrium seeking} problem. Consider a canonical noncooperative game among $d$ players where player $i$, $i=1,\ldots,d$, is associated with a cost function $g_i(x^i,x^{-i})$, $x^i$ denotes the strategy of player $i$, and $x^{-i}\triangleq (x^{1},\ldots, x^{i-1}, x^{i+1}, \ldots,x^d)$ denotes the strategy of other players. Let $X_i \subseteq \mathbb{R}^{n_i}$ denote the strategy set of player $i$. In an NEP, player $i$ seeks to solve
\begin{align}\label{prob:NEP}
&\hbox{minimize}_{\ x^i} \quad g_i(x^{i},x^{-i}) \\
& \hbox{subject to} \quad x^i \in X_i.\notag
\end{align}
A Nash equilibrium is a tuple of specific strategies, $\bar{x}\triangleq (\bar{x}^1,\ldots,\bar{x}^d)$, where no player can reduce her cost by unilaterally
deviating from her strategy. This is mathematically characterized as follows. For all $i=1,\ldots,d$, 
$g_i(\bar{x}^{i},\bar{x}^{-i}) \leq g_i(x^{i},\bar{x}^{-i})$, for all $x^{i} \in X_i$. 
\far{Suppose} for each player $i$, $X_i$ is closed and convex, and \far{$g_i(\bullet,x^{-i})$} is continuously differentiable and \far{convex for all $x^{-i}$}. Then, in view of~\cite[Prop.~1.4.2]{FacchineiPang2003}, $\mbox{SOL}(X,F)$ is equal to the set of Nash equilibria, where $X \subseteq \mathbb{R}^n$ and $F:X \to \mathbb{R}^n$ are defined as 
\begin{align}\label{eqn:def_NE_VI}
&X\triangleq \textstyle\prod_{i=1}^dX_i, \quad  F(x) \triangleq \left[\nabla_{x^{1}} g_1(x);\ldots;\nabla_{x^{d}} g_d(x)\right], \quad  n\triangleq \sum_{i=1}^d n_i .  
\end{align}
 \fyy{Notably, if $F$ is monotone (e.g., when $F$ is continuously differentiable and its Jacobian is positive semidefinite), then $\text{SOL}(X, F)$ may contain multiple equilibria.} Let $\psi:\mathbb{R}^n \to \mathbb{R}$ denote a global metric for the group of the players. Some popular examples for $\psi$ include the following choices. (i) utilitarian approach: $\psi(x)\triangleq \frac{1}{d}\sum_{i=1}^d g_i(x)$. (ii) egalitarian approach: $\psi(x)\triangleq \max_{i=1,\ldots, d} g_i(x)$. (iii) risk-averse approach: $\psi(x)\triangleq \rho(g_1(x),\ldots,g_d(x))$ where $\rho:\mathbb{R}^d\to \mathbb{R}$ denotes a suitably defined risk measure for the group of the players. In all these cases, the problem of computing the best (or worst) NE with respect to the metric $\psi(x)$ is precisely captured by problem~\eqref{eqn:optVI} where $f(x):= \psi(x)$ (or $f(x):= -\psi(x)$), respectively.

\subsubsection{Generalized Nash equilibrium problem (GNEP)}\label{sec:GNEPs} \far{Consider a} generalized Nash game~\cite{facchinei2010generalized} where the strategy sets of the players may involve shared constraints.  \far{Player $i$,  for $i=1,\ldots,d$, seeks to solve a problem of the form}
\begin{align}\label{prob:GNEP}
&\hbox{minimize}_{\ x^i} \quad g_i(x^{i},x^{-i}) \\
& \hbox{subject to} \quad x^i \in \mathcal{X}_i	(x^{-i}),\notag 
\end{align}
where $\mathcal{X}_i(x^{-i}) \subseteq \mathbb{R}^{n_i}$ denotes the strategy set of player $i$ that depends on the strategies of the rival players. Let us define the point-to-set mapping $\mathcal{X}(x)\triangleq \prod_{i=1}^d \mathcal{X}_i(x^{-i})$. Then, $\bar{x}\triangleq (\bar{x}^1,\ldots,\bar{x}^d)$ is a generalized NE if and only if $\bar{x} \in \mathcal{X}(\bar{x})$ and for all $i$,
\begin{align*}
g_i(\bar{x}^{i},\bar{x}^{-i}) \leq g_i(x^{i},\bar{x}^{-i}), \qquad \hbox{for all } x^{i} \in \mathcal{X}_i	(\bar{x}^{-i}).
\end{align*}
\far{We review two instances of GNEPs and examine their formulation as a bilevel VI. }

 
\noindent \far{(i)} GNEP over a nonlinear system of equations. Consider the case when the shared constraints \far{are characterized as the solution set of} a system of nonlinear equations, given as $\mathcal{S}\triangleq \left\{x \in \mathbb{R}^n \mid F(x)=0\right\}$, where \far{$\mathcal{S} \neq \emptyset$} and $F$ is a continuous and \far{monotone} mapping.  \far{Notably, it follows that $\mbox{SOL}(\mathbb{R}^n,F) = \mathcal{S}$.  Also, in view of \cite[Thm. 2.3.5]{FacchineiPang2003},  $\mathcal{S} $ is convex.  This implies that $ \mathcal{X}_i	(x^{-i}) \triangleq \{x^i \in \mathbb{R}^{n_i} \mid (x^{i},x^{-i}) \in \mathcal{S}\}$ is convex for any $x^{-i}$,  which then implies that the GNEP \eqref{prob:GNEP} is {\it jointly convex}~\cite[Def.~2]{facchinei2010generalized}.  Let us define $H(x) \triangleq \left[\nabla_{x^{1}} g_1(x);\ldots;\nabla_{x^{d}} g_d(x)\right]$}. Then,  \far{the following holds. If $x^* \in \mathcal{S}$ solves \eqref{eqn:bilevelVI}, then $x^*$ is an NE to the GNEP \eqref{prob:GNEP}. It is important to recall that the converse is not necessarily true~\cite{facchinei2010generalized}.} 

\noindent \far{(ii)} GNEP with complementarity constraints.  \far{Let $\mathcal{S}\triangleq \left\{x \in \mathbb{R}^n \mid  0 \leq x \perp F(x) \geq 0 \right\}$. Similarly,} noting that $\mbox{SOL}(\mathbb{R}^n_{+},F) = \mathcal{S}$,  \far{if $x^* $ solves \eqref{eqn:bilevelVI}, then it is a solution to \eqref{prob:GNEP}.}


\subsection{Related work and gaps} Next, we briefly review the literature on VIs and then, provide an overview of the prior research in addressing~\eqref{eqn:optVI} and~\eqref{eqn:bilevelVI}. 

\noindent {\bf (i) Background on VIs.} The theory, applications, and methods for the VIs have been extensively studied in the past decades~\cite{FacchineiPang2003,rockafellar2009variational}. 
In addressing deterministic VIs with \far{monotone} and Lipschitz continuous maps, the extragradient-type methods improve the iteration complexity of the gradient methods from $\mathcal{O}(\epsilon^{-2})$ to $\mathcal{O}(\epsilon^{-1})$~\cite{korpelevich1976extragradient,nemirovski2004prox}. Employing Monte Carlo sampling, stochastic variants of the gradient and extragradient methods have been developed more recently in~\cite{jiang2008stochastic,juditsky2011solving,yousefian2018stochastic,yousefian2014optimal,kannan2019optimal,kotsalis2022simple}
and their variance-reduced extensions~\cite{iusem2017extragradient,alacaoglu2022stochastic}, among others. 

\noindent {\bf Gap (i).} Despite these advances, the abovementioned references do not provide iterative methods for addressing the two formulations considered in this work.

\noindent {\bf (ii) Methods for the optimal solution selection problem.} Problem~\eqref{prob:opt_select} has been recently studied as a class of bilevel optimization problems~\cite{yamada2005hybrid,solodov2007explicit,solodov2007bundle,
beck2014first,sabach2017first,yousefian2021bilevel,jiang2023conditional,
chen2023bilevel,shen2023online,latafat2023adabim,merchav2023convex,samadi2024achieving} and appears to have its roots in the study of ill-posed optimization problems and the notion of exact regularization~\cite{ferris1991finite,friedlander2008exact,tikhonov1963solution,amini2019iterative}.

\noindent {\bf Gap (ii).}  It appears that there are no methods endowed with complexity guarantees for addressing~\eqref{prob:opt_select} with explicit lower-level functional constraints and possibly, with a nonconvex objective function $\phi(\bullet)$. 
%
%
%
%

\noindent {\bf (iii) Methods for VI-constrained optimization.} Naturally, solving problem~\eqref{eqn:optVI} is challenging, due to the following two reasons. (i) The feasible solution set in \eqref{eqn:optVI} is itself the solution set of a VI problem and so, it is often unavailable. Indeed, as mentioned in~\cite{feinstein2023characterizing}, there does not seem to exist any algorithms that can compute all Nash equilibria. (ii) The standard Lagrangian duality theory can be employed when the feasible set of the optimization problem is characterized by a finite number of functional constraints. However, by definition, $\mbox{SOL}(X,F)$ is characterized by infinitely many, and possibly nonconvex, inequality constraints. Indeed, there are only a handful of papers that have developed efficient iterative methods for computing the optimal equilibrium~\cite{yamada2005hybrid,facchinei2014vi,yousefian2017smoothing,doi:10.1137/20M1357378,kaushik2023incremental,jalilzadeh2024stochastic}. Among these, only the methods in~\cite{doi:10.1137/20M1357378,kaushik2023incremental,jalilzadeh2024stochastic} are equipped with suboptimality and infeasibility error bounds. 

\noindent {\bf Gap (iii).} The abovementioned three references do not address the case when the objective $f$ is nonconvex. Also, when $f$ is convex, the best known iteration complexity for suboptimality and infeasibility metrics appears to be $\mathcal{O}(\epsilon^{-4})$. In this work, our aim is to improve such guarantees for the convex case and also, achieve new guarantees for the strongly convex and nonconvex cases.

%

\noindent {\bf (iv) Methods for bilevel VIs and GNEPs.} 
Methods for addressing bilevel VIs and their closely-related variants have been recently developed in references including~\cite{facchinei2014vi,thong2020strong,van2021regularization,lampariello2022solution}. The GNEP has been extensively employed in several application areas, including economic sciences and telecommunication systems~\cite{facchinei2010generalized,chan1982generalized}. While the research on GNEPs has been more active in recent years~\cite{benenati2023optimal,benenati2023semi}, there are a limited number of methods equipped with provable rates for solving GNEPs, including~\cite{alizadeh2024randomized}, where Lagrangian duality theory is employed for relaxing the shared constraints. 

\noindent {\bf Gap (iv).} Among the references on bilevel VIs, only the work in~\cite{lampariello2022solution} appears to provide complexity guarantees, that is $\mathcal{O}(\epsilon^{-8})$ when $H$ is \far{monotone}. We aim at improving these guarantees significantly and also, providing new guarantees when $H$ is strongly monotone. With regard to GNEPs, our work provides a new avenue for solving \far{a variational formulation of these} problems  when the shared constraints do not admit the conventional form, examples of which are provided in subsection~\ref{sec:GNEPs}.

\subsection{Contributions} Motivated by these gaps, our contributions are as follows. 

 {\footnotesize 
\begin{table}
  \centering
  \tiny
 \caption{Summary of main contributions and assumptions in addressing problem~\eqref{eqn:bilevelVI}}
\begin{tabular}{|c|c|c|c|c|c|c|c|c|}
    \hline
    \multirow{3}{*}{\far{Method}} & \multicolumn{4}{|c|}{\multirow{1}{*}{\far{Main assumptions}}} & \multicolumn{4}{|c|}{\multirow{1}{*}{\far{Main convergence rate statements}}} \\
    \cline{2-9}
     &  \multirow{2}{*}{\far{$F$}}& \multirow{2}{*}{\far{$H$}}& \multicolumn{2}{|c|}{\far{$\mbox{SOL}(X,F)$}} & \multicolumn{2}{|c|}{\multirow{1}{*}{\far{Outer-level error metric}}} & \multicolumn{2}{|c|}{ \multirow{1}{*}{\far{Inner-level error metric}}}  \\ 
    \cline{4-9}
\multirow{3}{*}{\ses{IR-EG$_{{\texttt{m,m}}}$}}  &\multirow{3}{*}{\far{m.}}  & \multirow{3}{*}{\far{m.}} & \ses{w.sh.} &  \far{$\alpha$ known?} & \far{L.B.} &\far{U.B.} & \far{L.B.}&\far{U.B.}\\
    \cline{1-9}
   & && \far{$\times$} & \far{$\times$} & \far{asymptotic}  & \far{$ \tfrac{1}{\sqrt{K}} $} &\far{0} & \far{$ \tfrac{1}{\sqrt{K}} $} \\
     \cline{4-9}
 \far{(both diminishing } &&&\ses{$\footnotesize\mathcal{M} \geq 1$} & \far{$\times$} &
   \far{$-\sqrt[2\mathcal{M}]{{\tfrac{1}{ K}} }$}  &\far{ $ \frac{1}{\sqrt{K}} $} & \far{0} & \far{$ \tfrac{1}{\sqrt{K}} $}\\
     \cline{4-9}
      \far{and constant reg.)} &&&\ses{$\footnotesize\mathcal{M}=1$}  & \far{$\checkmark$} & \far{$-\frac{1}{K}$} & \far{$\frac{1}{K}$}  &\far{0}& \far{$\frac{1}{K}$} \\
          \hline
     \cline{1-9}
         \multirow{2}{*}{\ses{IR-EG$_{{\texttt{s,m}}}$} } & \multirow{2}{*}{\far{m.}}& \multirow{2}{*}{\far{s.m.}} &\far{$\times$} & \far{$\times$} &\far{asymptotic}& \ses{$ \frac{1}{K} $}  &   \far{0} & \far{$\tfrac{1}{K}$}  \\
    \cline{4-9}
   \far{(diminishing reg.)}& && \ses{$\mathcal{M}\geq 1$} & \far{$\times$} &  \far{$-\sqrt[\mathcal{M}]{{\tfrac{1}{ K}} }$} &\far{$ \tfrac{1}{K} $} &   \far{0} & \far{$\frac{1}{K}$}  \\
    \hline
    \cline{1-9}
    \multirow{3}{*}{\ses{IR-EG$_{{\texttt{s,m}}}$}} & \multirow{3}{*}{\far{m.}}& \multirow{3}{*}{\far{s.m.}} &\far{$\times$} & \far{$\times$} &\far{-} & \far{$ \frac{1}{K^{p}} $}  &  \far{ 0} & \far{$\tfrac{1}{ K^{p+1}} +\tfrac{1}{K}$}  \\
    \cline{4-9}
 \far{ (constant reg.) }& && \ses{$\mathcal{M}\geq 1$} & \far{$\times$}&  \ses{$-\sqrt[\mathcal{M}]{{\tfrac{1}{ K^{p+1}}} +{\frac{1}{K}}}$}& \far{$ \tfrac{1}{K^{p}} $} &   0 & \far{$\tfrac{1}{ K^{p+1}} +\frac{1}{K}$ } \\
     \cline{4-9}
    & && \ses{$\mathcal{M}=1$} & \far{$\checkmark$} & \far{-$\rho_1^K$} & \far{ $\rho_1^K$} &  \far{0}  &  \far{$\rho_2^K$}\\
 \hline
     \cline{1-9}
    \multirow{2}{*}{\far{IPR-EG}} & \multirow{2}{*}{\far{m.}}& \far{ $\nabla f$},  &\ses{$\mathcal{M} \geq 1$} & \far{$\times$} &\far{0}& \far{$\frac{1}{\sqrt{K}}$}   &   \far{0} &  \far{$\tfrac{1}{K\sqrt{K}}$ } \\
    \cline{4-9}
   & &\far{ncvx $f$} & \ses{$\mathcal{M}=1$} & \far{$\checkmark$} &  \far{0} & \far{$\tfrac{1}{\sqrt{K}}$}  &   \far{0} & \far{ $\tfrac{1}{K^2}$}\\
    \hline
  \end{tabular} 
  \vspace{-0.05in}
  \label{table:contributions}
    \caption*{\tiny \far{Notation: m.  and s.m. denote monotone and strongly monotone, resp.; \ses{w.sh.}  denotes $\alpha$-weakly sharp \ses{of order $\mathcal{M}$}; L.B. and U.B. denote lower and upper bound, resp.; $p \geq 1$ is arbitrary, $\rho_1, \rho_2 \in (0,1)$. Logarithmic numerical factors are ignored.}}
\end{table}
}
 
\noindent {\bf (i) Improved guarantees for monotone bilevel VIs.} When mappings $F$ and $H$ are both \far{monotone} and Lipschitz continuous, we develop a single-timescale iteratively regularized extragradient method (\far{IR-EG$_{{\texttt{m,m}}}$}) for addressing problem~\eqref{eqn:bilevelVI} and derive new iteration complexity of $\mathcal{O}\left(\epsilon^{-2}\right)$ \far{where $\epsilon$ is an upper bound on suitably defined gap functions for the inner and outer-level VI problems. We also show that the lower bound on the outer-level VI's gap function converges to zero asymptotically. Under a weak sharpness assumption on the inner VI problem with order $\mathcal{M}\geq 1$, we derive a rate for the lower bound on the outer-level gap function.  We further show that  when $\mathcal{M}=1$ and the regularization parameter is below a prescribed threshold, the complexities improve to $\mathcal{O}\left(\epsilon^{-1}\right)$.} For bilevel VIs, this result improves the existing complexity $\mathcal{O}(\epsilon^{-8})$ for this class of problems in prior work~\cite{lampariello2022solution}. Also, when $H(x):=\nabla f(x)$, addressing VI-constrained optimization, this improves the existing complexity $\mathcal{O}(\epsilon^{-4})$ in~\cite{doi:10.1137/20M1357378,kaushik2023incremental,jalilzadeh2024stochastic} by leveraging smoothness of $f$ and Lipschitz continuity of $F$.   

\noindent {\bf (ii) New guarantees for strongly monotone bilevel VIs.} When $H$ is strongly monotone, we show that a variant of \far{IR-EG$_{{\texttt{m,m}}}$}, called \far{IR-EG$_{{\texttt{s,m}}}$}, under a prescribed weighted averaging scheme, can be endowed with a \far{sublinear} convergence speed in terms of an \far{upper} bound on the dual gap function of \far{both} the outer \far{and inner VI problems}. We also \far{develop a variant of this method under a constant regularization parameter and derive} complexities of \far{$\mathcal{O}\left(\epsilon^{-\frac{1}{p}}\right)$} and \far{$\mathcal{O}\left(\epsilon^{\frac{-1}{(p+1)}}+\epsilon^{-1}\right)$} for the outer and inner VI problem, respectively, for any arbitrary $p\geq 1$. \far{We further show that under the weak sharpness assumption,  when the regularization parameter is below a prescribed threshold,  the generated iterate converges to the unique solution with a linear speed.} These guarantees appear to be new for both \eqref{eqn:optVI} and \eqref{eqn:bilevelVI}.

\noindent {\bf (iii) New guarantees for VI-constrained nonconvex optimization.} We develop an iterative method, called IPR-EG, for addressing problem~\eqref{eqn:optVI} when $f$ is smooth and nonconvex. We \far{analyze the convergence of the method under the weak sharpness of the VI problem and show the following results.  (a) When a threshold for the regularization parameter is unknown a priori, we develop an adaptive update rule for the regularization parameter and derive} an overall iteration complexity of nearly $\mathcal{O}\left(\epsilon^{-\far{3\mathcal{M}-2}}\right)$ in computing a stationary point, \far{where $\mathcal{M}$ denotes the order of the weak sharpness property} (see Theorem~\ref{theorem:NC}); \far{(b) When the threshold is known \far{and $\mathcal{M}=1$},  using a constant regularization parameter, we show that the overall iteration complexity improves to nearly $\mathcal{O}\left(\epsilon^{-2}\right)$.} Our method appears to be the first fully iterative scheme equipped with iteration complexity for solving VI-constrained nonconvex optimization problems, and in particular, for computing the worst equilibrium in monotone Nash games. \fyy{Table~\ref{table:contributions} provides a summary of the main contributions.}

\subsection*{Outline of the paper} The remainder of the paper is organized as follows. In section~\ref{sec:prelim}, we present \far{some preliminaries. In section~\ref{sec:mon}, we address bilevel VIs with a monotone outer-level mapping. In section~\ref{sec:smon}, we address bilevel VIs with a strongly monotone outer-level mapping. We also provide the implication of the results for VI-constrained strongly convex optimization problems.  In section~\ref{sec:ncvx}, we address VI-constrained nonconvex optimization.} Some preliminary numerical results are presented in section~\ref{sec:num}. Lastly, we provide \far{some} concluding remarks in section~\ref{sec:conclude}.   

\subsection*{Notation} Throughout, a vector $x \in \mathbb{R}^n$ is assumed to be a column vector. 	The transpose of $x$ is denoted by $x\fyy{^\top}$. Given two vectors $x\in \mathbb{R}^n$ and $y\in \mathbb{R}^m$, we use $[x;y] \in \mathbb{R}^{(n+m)\times 1}$ to denote their concatenated column vector. When $n=m$, we use $[x,y] \in \mathbb{R}^{n\times 2}$ to denote their side-by-side concatenation. We let $\|\cdot\|$ denote the Euclidean norm. A mapping $F:X \to \mathbb{R}^n$ is said to be monotone on a convex set $X \subseteq \mathbb{R}^n$ if $(F(x)-F(y))\fyy{^\top}(x-y)\geq 0$ for any $x,y \in X$. The mapping $F$ is said to be $\mu$-strongly monotone on a convex set $X \subseteq \mathbb{R}^n$ if $\mu>0$ and $(F(x)-F(y))\fyy{^\top}(x-y)\geq \mu\|x-y\|^2$ for any $x,y \in X$. Also, $F$ is said to be Lipschitz continuous with parameter $L>0$ on the set $X$ if $\|F(x)-F(y)\|\leq L\|x-y\|$ for any $x,y \in X$. A continuously differentiable function $f:X \to \mathbb{R}$ is called $\mu$-strongly convex on a convex set $X$ if $f(x) \geq f(y)+\nabla f(y)\fyy{^\top}(x-y)+\frac{\mu}{2}\|x-y\|^2$. The Euclidean projection of vector $x$ onto a closed convex set $X$ is denoted by $ \Pi_{X}[x] $, where $\Pi_{X}[x]\triangleq  \mbox{arg}\min_{ y \in  X}\| x- y\|$. We let $\mbox{dist}(x,X)\triangleq \|x-\Pi_{X}[x]\|$ denote the distance of $x$ from the set $X$. \far{Throughout, we let $X^*$ denote the solution set of problem~\eqref{eqn:bilevelVI}.}

\section{Preliminaries}\label{sec:prelim}
In the following, we provide some definitions and preliminary results that will be utilized in the analysis in this section. 
\begin{lemma}[Projection theorem {\cite[Prop.~2.2.1]{bertsekas2003convex}}]\label{lem:Projection theorm}\em
Let $X$ be a closed convex set. Given any arbitrary $x \in \mathbb{R}^n$, a vector $x^* \in X$ is equal to $\Pi_X[x]$ if and only if for all $y\in X$ we have $(y-x^*)\fyy{^\top}(x-x^*) \leq 0$. 
\end{lemma}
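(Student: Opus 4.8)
The plan is to establish the two implications of the stated equivalence, after first recalling why $\Pi_X[x]$ is well defined. Since $X$ is nonempty, closed, and convex and the function $y\mapsto \tfrac{1}{2}\|x-y\|^2$ is continuous, coercive, and strictly convex, the Weierstrass theorem guarantees a unique minimizer, which is precisely $\Pi_X[x]$; thus the variational characterization below pins down exactly this point.

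For the sufficiency direction, I would assume that $x^*\in X$ satisfies $(y-x^*)^\top(x-x^*)\le 0$ for all $y\in X$ and show that $x^*$ minimizes the distance. Expanding, for an arbitrary $y\in X$,
\[
\|x-y\|^2=\|x-x^*\|^2+2(x-x^*)^\top(x^*-y)+\|x^*-y\|^2 .
\]
The cross term equals $-2(x-x^*)^\top(y-x^*)\ge 0$ by the assumed inequality, so $\|x-y\|^2\ge \|x-x^*\|^2$ for every $y\in X$, which shows $x^*=\Pi_X[x]$.

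For the necessity direction, I would fix $x^*=\Pi_X[x]$ and an arbitrary $y\in X$, and exploit convexity of $X$: the segment $x^*+t(y-x^*)$ lies in $X$ for all $t\in[0,1]$. Since $x^*$ minimizes the squared distance over $X$, the scalar function $\theta(t)\triangleq\|x-x^*-t(y-x^*)\|^2$ attains its minimum over $[0,1]$ at $t=0$, forcing $\theta'(0)\ge 0$. Computing $\theta'(0)=-2(x-x^*)^\top(y-x^*)$ then yields $(y-x^*)^\top(x-x^*)\le 0$, as required.

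Since both constructions reduce to direct algebraic manipulations of the squared norm, no serious obstacle arises; the only subtlety worth flagging is the well-posedness of $\Pi_X[x]$ (existence and uniqueness), which underlies the very notation in the statement and rests on closedness and convexity of $X$ together with strict convexity of the squared Euclidean norm.
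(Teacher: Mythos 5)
Your proof is correct: the well-posedness remark, the expansion of $\|x-y\|^2$ for sufficiency, and the one-sided derivative argument $\theta'(0)\ge 0$ along the segment $x^*+t(y-x^*)$ for necessity are all sound and complete. The paper itself gives no proof of this lemma --- it is imported verbatim from \cite[Prop.~2.2.1]{bertsekas2003convex} --- and your argument is exactly the standard one found there, so there is nothing to reconcile.
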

To quantify the quality of the generated iterates by the proposed algorithms, we use the dual gap function~\cite{juditsky2011solving,yousefian2017smoothing,doi:10.1137/20M1357378}, defined as follows. 
\begin{definition}[Dual gap function]\em
Let $X\subseteq \mathbb{R}^n$ be a nonempty, closed, and convex set and $F:X \to \mathbb{R}^n$ be a continuous mapping. Then, the dual gap function associated with $\mbox{VI}(X,F)$ is an extended-valued function $\mbox{Gap}:\mathbb{R}^n \to \mathbb{R}\cup\{+\infty\}$ given as $ \mbox{Gap}(x,X,F)\triangleq \sup_{ y\in  {X}} F(y)\fyy{^\top}(x-y)$.
\end{definition}
\begin{remark} \label{rem:gap}
Note that when $F$ is continuous and monotone, if $x \in X$, then $\mbox{Gap}(x,X,F)=0$ if and only if $x \in \mbox{SOL}(X,F)$~\cite{juditsky2011solving}. Similar to~\cite[page 167]{FacchineiPang2003}, here we define the  dual gap function for vectors both inside and outside of the set $X$. Note that if $x \in X$, then $ \mbox{Gap}(x,X,F) \geq 0$. However, this may not be the case for $x \notin X$. In the case when the generated iterate by an algorithm is possibly infeasible to the set of the VI (e.g., this emerges in solving bilevel VIs), it is important to build both a lower bound and an upper bound on the dual gap function.
\end{remark}
In some of the rate results, we utilize the weak sharpness property, defined as follows. 
\begin{definition}[Weak sharpness{~\cite{marcotte1998weak,kannan2019optimal}}]\label{def:weaksharp}\em
Let $X\subseteq \mathbb{R}^n$ be a nonempty, closed, and convex set and $F:X \to \mathbb{R}^n$ be a continuous mapping. \far{$\mbox{VI}(X,F)$ is said to be $\alpha$-weakly sharp with order $\mathcal{M}$ if there exist scalars} $\alpha>0$ \ses{and $\mathcal{M} \geq 1$} such that for all $x \in X$ and all $x^* \in  \mbox{SOL}(X,F)$, we have 
\ses{$F(x^*)\fyy{^\top}(x- x^*) \geq \alpha\,  \mbox{dist}^\mathcal{M}(x, \mbox{SOL}(X,F)).$}
\end{definition}
\far{\begin{remark} \label{rem:ws}
The notion of weak sharp minima was first introduced in~\cite{burke1993weak} to account for the possibility of non-unique solutions in optimization and complementarity problems. Some sufficient conditions and characterizations for weak sharp minima were studied in~\cite{studniarski1999weak} subsequently. An important subclass of VIs that admits this property with $\mathcal{M}=1$ is the monotone linear complementarity problem (LCP), \fyy{under the assumption that the LCP is nondegenerate}~\cite[Thm. 3.7]{burke1993weak}. LCP is a powerful mathematical program that has been applied to a wide range of problems in operations research and game theory (cf.~\cite{cottle2009linear} for an in-depth study). 
\end{remark}}
Throughout, in deriving some of the rate results, we may utilize the following terms. 
\begin{definition}\label{def:terms_bounded}\em
Let $D_X^2\triangleq \sup_{x,y \in X}\ \frac{1}{2}\|x-y\|^2$ and define $B_F, B_H, B_f>0$ such that $B_F\triangleq \sup_{x \in X^*_F} \|F(x)\|$, $B_H\triangleq \sup_{x \in X^*_F} \|H(x)\|$, and $B_f \triangleq \sup_{x \in X^*_F} \|\nabla f(x)\|$. We also define \far{$B_F^*, B_H^*, B_f^*>0$ similarly, but with respect to the set $X^*$. Further,} we define $C_F, C_H, C_f>0$ \fy{in a similar vein}, with respect to the set $X$.
\end{definition}
\section{Monotone bilevel VIs and VI-constrained convex optimization} \label{sec:mon}
The main assumptions in this section are presented in the following. 
\begin{assumption}\label{assum:bilevelVI_m} \em
Consider problem~\eqref{eqn:bilevelVI}. Let the following statements hold. 

\noindent (a) Set $X$ is nonempty, closed, and convex.

\noindent (b) Mapping $F:X \to \mathbb{R}^n$ is $L_F$-Lipschitz continuous and \far{monotone} on $X$.

\noindent  (c) Mapping $H:X \to \mathbb{R}^n$ is  $L_H$-Lipschitz continuous and \far{monotone} on $X$.

\noindent (d) The solution set of problem~\eqref{eqn:bilevelVI} is nonempty.
\end{assumption}
\begin{remark}\label{rem:assum_monotone}
We note that Assumption~\ref{assum:bilevelVI_m}~(d) can be met under several settings. We provide an example in the following. Let $X$ be a closed convex set and $F$ be monotone. Then, in view of~\cite[Theorem~2.3.5]{FacchineiPang2003}, $\mbox{SOL}(X,F)$ is convex. If, additionally, $X$ is bounded, then from~\cite[Corollary~2.2.5]{FacchineiPang2003}, $\mbox{SOL}(X,F)$ is nonempty and compact. Now, let us consider $\mbox{VI}(\mbox{SOL}(X,F),H)$ and let $H$ be monotone. Invoking the preceding two results once again, we can conclude that $\mbox{SOL}(\mbox{SOL}(X,F),H)$ is nonempty, compact, and convex. 
\end{remark}
\subsection{Algorithm outline}
We devise Algorithm~\ref{alg:IR-EG} to address two problems. (i) The bilevel VI problem \eqref{eqn:bilevelVI} where both $F$ and $H$ are assumed to be \far{monotone} and Lipschitz continuous. (ii) The VI-constrained optimization problem~\eqref{eqn:optVI} when $f$ is $L$-smooth and convex, by setting $H(x):=\nabla f(x)$. Algorithm~\ref{alg:IR-EG} is essentially an iteratively regularized extragradient method, \far{named IR-EG$_{{\texttt{m,m}}}$}. A key idea lies in employing the regularization technique to incorporate both mappings $H$ and $F$. At iteration $k$, the vectors $y_k$ and $x_k$ are updated by using the regularized map $F(\bullet) +\eta_kH(\bullet)$. The regularization parameter $\eta_k$ is updated iteratively. Intuitively, the update rule of $\eta_k$ regulates the trade-off between incorporating the information of mapping $F$ and $H$ \far{in updating the iterates}. 

\begin{algorithm}[H]
\caption{\far{IR-EG$_{{\texttt{m,m}}}$} for bilevel VI~\eqref{eqn:bilevelVI} with \far{monotone} $H$ and $F$}
\label{alg:IR-EG}
\begin{algorithmic}[1]
\STATE \textbf{input:} Initial vectors $x_0, \far{\bar{y}_0} \in X$, a stepsize $\gamma > 0$, an initial regularization parameter $\eta_0 > 0$, and $0\leq b<1$.
\FOR{$k = 0, 1, \ldots, K-1$}
\STATE $y_{k+1}  := \Pi_X\left[x_k - {\gamma}\left(F(x_k)+\eta_k H(x_k)\right)\right]   \label{line:update_y}$
 
\STATE $x_{k+1} := \Pi_X\left[x_k - {\gamma}\left(F(y_{k+1}) +\eta_k H(y_{k+1})\right)\right]  \label{line:update_x}$ 
 
\STATE $\bar{y}_{k+1} := \left(k \bar{y}_k +  y_{k+1}\right)/(k+1)$
\STATE $\eta_{k+1} :=\frac{\eta_0}{(k+1)^b}$ 
\ENDFOR
\STATE {\bf return} $\bar{y}_K$
\end{algorithmic}
\end{algorithm}
\subsection{Convergence analysis for \far{IR-EG$_{{\texttt{m,m}}}$}}
In the bilevel VI \eqref{eqn:bilevelVI}, \far{note that} $\mbox{Gap}\left(x,\mbox{SOL}(X,F),H\right)  \geq 0$ for  $x \in \mbox{SOL}(X,F)$. This follows directly from Remark~\ref{rem:gap}. However, $\mbox{Gap}\left(x,\mbox{SOL}(X,F),H\right) $ might be negative for  $x \notin \mbox{SOL}(X,F)$. In the following, we establish a lower bound on the dual gap function of the outer VI problem for any vector $x\in X$ that might not belong to $\mbox{SOL}(X,F)$. \far{The proof is presented in section~\ref{sec:proof_lemma_ws}.}
\begin{lemma}\label{lem:bilevelVI_gap_ws}\em
Consider problem~\eqref{eqn:bilevelVI}. Let Assumption~\ref{assum:bilevelVI_m} hold. Then, the following results hold.

\far{
\noindent (i) $\mbox{Gap}\left(x,\mbox{SOL}(X,F),H\right)   \geq - B_H\, \mbox{dist}\left(x,\mbox{SOL}(X,F)\right) $ for any $x \in X$. \far{In particular,  for any $x \in \mbox{SOL}(X,F)$ we have $\mbox{Gap}\left(x,\mbox{SOL}(X,F),H\right)  \geq 0$.} 

 \noindent (ii)  Consider the special case $H(x) := \nabla f(x)$ where $f$ is a continuously differentiable and convex function. Then, for any $x \in X$ we have 
$$f(x) -f(x^*) \geq - \|\nabla f(x^*)\|\,\mbox{dist}\left(x,\mbox{SOL}(X,F)\right), $$ where $x^*$ denotes an optimal solution to problem~\eqref{eqn:optVI}. Further, if $f$ is $\mu$-strongly convex, then for any $x \in X$ we have 
\far{\begin{align*}  \frac{\mu}{2}\|x-x^*\|^2 &\leq f(x) -f(x^*)+ \|\nabla f(x^*)\|\,\mbox{dist}(x,\mbox{SOL}(X,F)),
\end{align*} }
where $x^*$ denotes the unique optimal solution to problem~\eqref{eqn:optVI}. 

\noindent (iii) Suppose $\mbox{SOL}(X,F)$ is $\alpha$-weakly sharp of order $\mathcal{M}\geq 1$.  Then, 

\noindent $\ \mbox{dist}\left(x,\mbox{SOL}(X,F)\right)\leq   \far{\sqrt[\mathcal{M}]{\alpha^{-1}\mbox{Gap}\left(x,X,F\right)}}$ for any $x \in X$.}



\end{lemma}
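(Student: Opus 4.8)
The plan is to prove the three parts directly, anchoring each bound at the Euclidean projection of $x$ onto the inner solution set. Throughout, write $S \triangleq \mbox{SOL}(X,F)$ and let $\bar{x} \triangleq \Pi_S[x]$. This projection is well-defined since, by Remark~\ref{rem:assum_monotone}, $S$ is nonempty, closed, and convex (using monotonicity of $F$ together with closedness and convexity of $X$). Then $\bar{x} \in S$ and $\|x-\bar{x}\| = \mbox{dist}(x,S)$, and I note $S = X^*_F$, so the suprema defining $B_H$ and $B_f$ in Definition~\ref{def:terms_bounded} are taken over $S$.

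For part (i), I would lower-bound the supremum in the gap by its value at the single feasible point $y=\bar{x}\in S$, giving $\mbox{Gap}(x,S,H)=\sup_{y\in S}H(y)^\top(x-y)\geq H(\bar{x})^\top(x-\bar{x})$. Applying the Cauchy--Schwarz inequality together with $\|H(\bar{x})\|\leq B_H$ (valid since $\bar{x}\in S=X^*_F$) then yields $H(\bar{x})^\top(x-\bar{x})\geq -\|H(\bar{x})\|\,\|x-\bar{x}\|\geq -B_H\,\mbox{dist}(x,S)$, which is the claimed bound. The special case $x\in S$ follows at once because $\mbox{dist}(x,S)=0$ (alternatively, directly from Remark~\ref{rem:gap}).

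For part (ii), I would begin from convexity of $f$, namely $f(x)-f(x^*)\geq\nabla f(x^*)^\top(x-x^*)$, and split $x-x^*=(x-\bar{x})+(\bar{x}-x^*)$. The key observation is that, since $x^*$ solves~\eqref{eqn:optVI} with $f$ convex, it solves the inner-constrained VI, i.e. $\nabla f(x^*)^\top(y-x^*)\geq 0$ for all $y\in S$; taking $y=\bar{x}$ discards the second term. Cauchy--Schwarz on the remaining term gives $\nabla f(x^*)^\top(x-\bar{x})\geq -\|\nabla f(x^*)\|\,\mbox{dist}(x,S)$, which establishes the convex bound. For the strongly convex case I would instead invoke $f(x)-f(x^*)\geq\nabla f(x^*)^\top(x-x^*)+\tfrac{\mu}{2}\|x-x^*\|^2$, apply the same estimate to the linear term, and rearrange to isolate $\tfrac{\mu}{2}\|x-x^*\|^2$.

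For part (iii), the argument is immediate: since $S\subseteq X$, restricting the supremum defining $\mbox{Gap}(x,X,F)$ to $y=\bar{x}\in S$ gives $\mbox{Gap}(x,X,F)\geq F(\bar{x})^\top(x-\bar{x})$, and the weak sharpness property of Definition~\ref{def:weaksharp} (applied with $x^*=\bar{x}$) lower-bounds the right-hand side by $\alpha\,\mbox{dist}^{\mathcal{M}}(x,S)$; dividing by $\alpha$ and taking the $\mathcal{M}$-th root yields the claim. All three steps are individually routine, resting only on Cauchy--Schwarz and the convexity of $S$; the one genuine subtlety is in part (ii), where the stated constant is $\|\nabla f(x^*)\|$ at the specific optimal point rather than at $\bar{x}$. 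This is precisely why I expand about $x^*$ and must exploit the inner-level optimality condition $\nabla f(x^*)^\top(\bar{x}-x^*)\geq 0$ to cancel the cross term, and I expect this to be the main (albeit modest) obstacle.
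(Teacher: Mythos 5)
Your proposal is correct and follows essentially the same route as the paper: anchor at the projection $\Pi_{\mbox{\scriptsize SOL}(X,F)}[x]$, lower-bound the gap supremum at that point, apply Cauchy--Schwarz with $B_H$ (resp.\ $\|\nabla f(x^*)\|$), and in part (ii) exploit the optimality condition $\nabla f(x^*)^\top(\hat{x}-x^*)\geq 0$ to discard the cross term. The only cosmetic difference is that in part (iii) the paper evaluates the supremum at an arbitrary $x^*_F\in \mbox{SOL}(X,F)$ rather than specifically at the projection, which changes nothing.
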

\far{Next, we provide some recursive inequalities} that will be used in the rate analysis. \far{The proof is presented in section~\ref{sec:proof_lemma_ineq}.}
\begin{lemma}\label{lem:monotone_lemma_ineq}\em
Consider problem~\eqref{eqn:bilevelVI}. Let $\{\bar{y}_k\}$ be generated by Algorithm~\ref{alg:IR-EG}. Let Assumption~\ref{assum:bilevelVI_m} hold. \far{The following statements hold for any  $x\in X$.}

\noindent \far{(i) For all $k\geq 0$, we have
\begin{align}\label{eqn:EG_lemma_mm_last}
 \|x_{k+1} - x\|^2 & \leq\|x_k - x\|^2   - \|y_{k+1} - x_k\|^2+ 2{\gamma}^2(L_F^2+\eta_k^2L_H^2)\|y_{k+1} - x_k\|^2 \\
 & + 2{\gamma}\left(F(y_{k+1})+\eta_kH(y_{k+1})\right)\fyy{^\top}\left(x-y_{k+1}\right).\notag
\end{align}} 
\noindent \far{(ii) Suppose ${\gamma}^2(L_F^2+\eta_k^2L_H^2) \leq 0.5$ for all $k\geq 0$. For all $k\geq 0$,} we have
\begin{align}\label{ineq:EG_lemma_merelymonotone}
2{\gamma}\left(F(x)+\eta_kH(x)\right)\fyy{^\top}\left(y_{k+1}-x\right)   & \leq  \|x_k - x\|^2   -\|x_{k+1} - x\|^2 .
\end{align}
\noindent \far{(iii) Suppose ${\gamma}^2(L_F^2+\eta_k^2L_H^2) \leq 0.5$ for all $k\geq 0$.} Further, if $H(x) := \nabla f(x)$ where $f$ is a convex and $L$-smooth function, then \far{for all $k\geq 0$, we have}
\begin{align}\label{ineq:EG_lemma_merelymonotone2}
2{\gamma}F(x)\fyy{^\top}\left(y_{k+1}-x\right) +2{\gamma}\eta_k(f(y_{k+1}) -f(x))  & \leq  \|x_k - x\|^2   -\|x_{k+1} - x\|^2 .
\end{align} 
\end{lemma}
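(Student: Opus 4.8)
The plan is to handle all three parts through a single one-step extragradient estimate. Writing $G_k(\cdot)\triangleq F(\cdot)+\eta_k H(\cdot)$ for the regularized map used in the two projection steps of \cref{alg:IR-EG}, the backbone of the argument is the inequality
\[
\begin{aligned}
\|x_{k+1} - x\|^2 \le{}& \|x_k - x\|^2 - \|y_{k+1} - x_k\|^2 - \|y_{k+1} - x_{k+1}\|^2 \\
&{}+ 2{\gamma}(y_{k+1} - x_{k+1})^\top\!\left(G_k(y_{k+1}) - G_k(x_k)\right) + 2{\gamma}\,G_k(y_{k+1})^\top(x - y_{k+1}),
\end{aligned}
\]
valid for every $x\in X$. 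Parts (i)--(iii) will all be read off from this single estimate.

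To derive it I would invoke the projection theorem (\cref{lem:Projection theorm}) twice. First, applying it to $x_{k+1}=\Pi_X[x_k-{\gamma}G_k(y_{k+1})]$ with test point $x\in X$ gives $(x_{k+1}-x_k)^\top(x_{k+1}-x)\le {\gamma}\,G_k(y_{k+1})^\top(x-x_{k+1})$. Plugging this into the exact identity $\|x_{k+1}-x\|^2=\|x_k-x\|^2+2(x_{k+1}-x_k)^\top(x_{k+1}-x)-\|x_{k+1}-x_k\|^2$ and splitting $x-x_{k+1}=(x-y_{k+1})+(y_{k+1}-x_{k+1})$ produces the desired term $2{\gamma}G_k(y_{k+1})^\top(x-y_{k+1})$ plus a residual $2{\gamma}(y_{k+1}-x_{k+1})^\top G_k(y_{k+1})$. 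Into the latter I would insert $G_k(x_k)$ and apply the projection theorem a second time to $y_{k+1}=\Pi_X[x_k-{\gamma}G_k(x_k)]$ with test point $x_{k+1}\in X$, which bounds $2{\gamma}(y_{k+1}-x_{k+1})^\top G_k(x_k)$ by $2(x_{k+1}-y_{k+1})^\top(y_{k+1}-x_k)$. Expanding $-\|x_{k+1}-x_k\|^2=-\|x_{k+1}-y_{k+1}\|^2-2(x_{k+1}-y_{k+1})^\top(y_{k+1}-x_k)-\|y_{k+1}-x_k\|^2$, the cross term $2(x_{k+1}-y_{k+1})^\top(y_{k+1}-x_k)$ cancels exactly, leaving the clean $-\|y_{k+1}-x_k\|^2-\|y_{k+1}-x_{k+1}\|^2$ structure above.

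For part (i), I would bound the remaining inner product by Young's inequality $2u^\top v\le\|u\|^2+\|v\|^2$ with $u=y_{k+1}-x_{k+1}$ and $v={\gamma}(G_k(y_{k+1})-G_k(x_k))$; the resulting $\|y_{k+1}-x_{k+1}\|^2$ exactly absorbs the $-\|y_{k+1}-x_{k+1}\|^2$ already present. The additive-in-squares constant $L_F^2+\eta_k^2L_H^2$ (rather than $(L_F+\eta_kL_H)^2$) comes from applying $\|a+b\|^2\le 2\|a\|^2+2\|b\|^2$ to $G_k(y_{k+1})-G_k(x_k)=(F(y_{k+1})-F(x_k))+\eta_k(H(y_{k+1})-H(x_k))$ and then using the Lipschitz bounds of \cref{assum:bilevelVI_m}, giving $\|G_k(y_{k+1})-G_k(x_k)\|^2\le 2(L_F^2+\eta_k^2L_H^2)\|y_{k+1}-x_k\|^2$; this reproduces \eqref{eqn:EG_lemma_mm_last}. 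For part (ii), the stepsize condition ${\gamma}^2(L_F^2+\eta_k^2L_H^2)\le 0.5$ makes $-\|y_{k+1}-x_k\|^2+2{\gamma}^2(L_F^2+\eta_k^2L_H^2)\|y_{k+1}-x_k\|^2\le 0$, collapsing part (i) to $2{\gamma}G_k(y_{k+1})^\top(y_{k+1}-x)\le\|x_k-x\|^2-\|x_{k+1}-x\|^2$; since $F$ and $H$ are monotone and $\eta_k>0$, $G_k$ is monotone, so $G_k(y_{k+1})^\top(y_{k+1}-x)\ge G_k(x)^\top(y_{k+1}-x)=(F(x)+\eta_kH(x))^\top(y_{k+1}-x)$, yielding \eqref{ineq:EG_lemma_merelymonotone}. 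For part (iii), I would start from the same collapsed inequality but split $G_k(y_{k+1})^\top(y_{k+1}-x)$ into its $F$- and $\nabla f$-parts, lower-bounding the first by $F(x)^\top(y_{k+1}-x)$ via monotonicity of $F$ and the second by $f(y_{k+1})-f(x)$ via the gradient inequality for the convex $L$-smooth $f$, which gives \eqref{ineq:EG_lemma_merelymonotone2}.

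The main obstacle is the bookkeeping in the second paragraph: correctly orienting both projection inequalities, choosing the test points ($x$ for the $x_{k+1}$-projection, $x_{k+1}$ for the $y_{k+1}$-projection) so that the intermediate cross term telescopes, and tracking signs so the leftover squared terms appear with the right signs. Once the one-step estimate is secured, parts (i)--(iii) are routine; the only subtlety worth emphasizing is the use of $\|a+b\|^2\le 2\|a\|^2+2\|b\|^2$, which is what makes the stepsize condition ${\gamma}^2(L_F^2+\eta_k^2L_H^2)\le 0.5$ — and hence the cancellation in parts (ii) and (iii) — robust to the varying regularization parameter $\eta_k$.
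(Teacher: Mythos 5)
Your proposal is correct and follows essentially the same route as the paper: two applications of the projection theorem (to the $x_{k+1}$-update with test point $x$ and to the $y_{k+1}$-update with test point $x_{k+1}$), the split $x-x_{k+1}=(x-y_{k+1})+(y_{k+1}-x_{k+1})$, Young's inequality to absorb $-\|y_{k+1}-x_{k+1}\|^2$, and $\|a+b\|^2\le 2\|a\|^2+2\|b\|^2$ with the Lipschitz bounds to produce the $2\gamma^2(L_F^2+\eta_k^2L_H^2)$ factor, followed by monotonicity of $F,H$ for (ii) and the convex gradient inequality for (iii). The only difference is cosmetic ordering of the algebraic bookkeeping; the key inequalities and test-point choices match the paper's proof exactly.
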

\far{Next, we derive error bounds and show the asymptotic convergence of the method.}
 \begin{tcolorbox}[colback=blue!5!white,colframe=blue!55!black]
\far{\begin{theorem}[Error bounds and asymptotic convergence of IR-EG$_{{\texttt{m,m}}}$]\label{Thm:asymptotic-m.m}\em
Consider problem~\eqref{eqn:bilevelVI}. Let $\{\bar{y}_k\}$ be generated by Algorithm~\ref{alg:IR-EG}, let Assumption~\ref{assum:bilevelVI_m} hold, and let $X$ be bounded. Assume that $\{\eta_k\}$ is \ssrtwo{diminishing} and ${\gamma}^2(L_F^2+\eta_0^2L_H^2) \leq 0.5$. Then, the following results hold. 

\noindent (i) For all $K\geq 1$, we have 
$$ - B_H\, \mbox{dist}\left(\bar{y}_K,\mbox{SOL}(X,F)\right) \leq \mbox{Gap}\left(\bar{y}_K,\mbox{SOL}(X,F),H\right)      \leq  (\gamma\eta_{K-1} K)^{-1}  D_X^2.$$  
 
 \noindent (ii) For all $K\geq 1$, we have 
$$ 0 \leq \mbox{Gap}\left(\bar{y}_K,X,F\right)       \leq  (\gamma K)^{-1}D_X^2 +\sqrt{2} C_HD_X K^{-1}\textstyle\sum_{k=0}^{K-1}\eta_k.$$
 
 \noindent (iii) Suppose $\lim_{k\to \infty}k\eta_{k-1} = \infty$ and $\lim_{k\to \infty}\tfrac{\sum_{j=0}^{k-1} \eta_j}{k} = 0$, e.g., when $\eta_k:=\frac{1}{(k+1)^b}$ for $k\geq 0$, where $0<b<1$. Then, every accumulation point of $\{ \bar y_k\}$ belongs to the solution set  of the problem~\eqref{eqn:bilevelVI}, that is $ \mbox{SOL}(\mbox{SOL}(X,F),H)$.
\end{theorem}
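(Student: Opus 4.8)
The plan is to obtain parts (i) and (ii) from the single-step recursion in Lemma~\ref{lem:monotone_lemma_ineq}(ii), namely $2\gamma(F(x)+\eta_k H(x))^\top(y_{k+1}-x) \le \|x_k-x\|^2-\|x_{k+1}-x\|^2$. Note first that since $\{\eta_k\}$ is diminishing we have $\eta_k\le\eta_0$, so the hypothesis $\gamma^2(L_F^2+\eta_0^2L_H^2)\le 0.5$ guarantees $\gamma^2(L_F^2+\eta_k^2L_H^2)\le 0.5$ for every $k$, validating the use of Lemma~\ref{lem:monotone_lemma_ineq}(ii)--(iii). For part (ii) I would fix $x\in X$, move the $H$-term to the right, and bound it crudely via $-2\gamma\eta_k H(x)^\top(y_{k+1}-x)\le 2\gamma\eta_k\|H(x)\|\,\|y_{k+1}-x\|\le 2\sqrt{2}\gamma\eta_k C_H D_X$, using $\|H(x)\|\le C_H$ on $X$ and the diameter bound $\|y_{k+1}-x\|\le\sqrt{2}D_X$. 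Summing over $k=0,\dots,K-1$ telescopes $\|x_k-x\|^2-\|x_{k+1}-x\|^2$ to at most $2D_X^2$; dividing by $2\gamma K$, using the linearity identity $F(x)^\top(\bar y_K-x)=\tfrac1K\sum_{k=0}^{K-1}F(x)^\top(y_{k+1}-x)$ (valid since $\bar y_K=\tfrac1K\sum_{k=0}^{K-1}y_{k+1}$), and taking the supremum over $x\in X$ gives the stated bound. The lower bound $\mbox{Gap}(\bar y_K,X,F)\ge0$ is immediate from Remark~\ref{rem:gap}, since $\bar y_K\in X$ as a convex combination of projections onto $X$.

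For part (i), the new ingredient is that any $x\in\mbox{SOL}(X,F)$ forces $F(x)^\top(y_{k+1}-x)\ge0$ (because $y_{k+1}\in X$), so the recursion sharpens to $2\gamma\eta_k H(x)^\top(y_{k+1}-x)\le\|x_k-x\|^2-\|x_{k+1}-x\|^2$. Dividing by $\eta_k$ introduces the weights $w_k:=1/\eta_k$, and here lies the main obstacle: because $\{w_k\}$ is non-constant, the right-hand side no longer telescopes. I would resolve this by summation by parts, writing $\sum_{k=0}^{K-1}w_k(\|x_k-x\|^2-\|x_{k+1}-x\|^2)=w_0\|x_0-x\|^2+\sum_{k=1}^{K-1}(w_k-w_{k-1})\|x_k-x\|^2-w_{K-1}\|x_K-x\|^2$, bounding $\|x_k-x\|^2\le 2D_X^2$ and discarding the last (nonpositive) term, and using $w_k-w_{k-1}\ge0$ (this is precisely where monotonicity of $\{\eta_k\}$ is used, which holds for the prescribed $\eta_k=\eta_0/(k+1)^b$). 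The inner sum then telescopes, collapsing the whole expression to $2D_X^2\,w_{K-1}=2D_X^2/\eta_{K-1}$. Dividing by $2\gamma$, averaging, and taking the supremum over $x\in\mbox{SOL}(X,F)$ yields $\mbox{Gap}(\bar y_K,\mbox{SOL}(X,F),H)\le D_X^2/(\gamma\eta_{K-1}K)$; the matching lower bound is Lemma~\ref{lem:bilevelVI_gap_ws}(i) applied at $\bar y_K\in X$.

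For part (iii), the hypotheses $k\eta_{k-1}\to\infty$ and $\tfrac1k\sum_{j=0}^{k-1}\eta_j\to0$ drive both upper bounds to zero, so $\mbox{Gap}(\bar y_K,X,F)\to0$ and $\mbox{Gap}(\bar y_K,\mbox{SOL}(X,F),H)\to0$. Let $\bar y$ be any accumulation point along a subsequence $\bar y_{k_j}\to\bar y$; since $X$ is bounded such points exist, and since $X$ is closed and each $\bar y_k\in X$ we have $\bar y\in X$. The key analytic fact I would invoke is that, with $X$ compact and $F,H$ continuous, both $\mbox{Gap}(\cdot,X,F)$ and $\mbox{Gap}(\cdot,\mbox{SOL}(X,F),H)$ are continuous, the latter because $\mbox{SOL}(X,F)$ is itself nonempty, compact, and convex by the reasoning in Remark~\ref{rem:assum_monotone} (each gap is a maximum of a jointly continuous function over a compact set). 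Passing to the limit along $k_j$ in part (ii) gives $\mbox{Gap}(\bar y,X,F)=0$, so $\bar y\in\mbox{SOL}(X,F)$ by Remark~\ref{rem:gap}. Hence $\mbox{dist}(\bar y_{k_j},\mbox{SOL}(X,F))\to0$, so the lower bound $-B_H\,\mbox{dist}(\bar y_{k_j},\mbox{SOL}(X,F))$ from part (i) also tends to zero; squeezing it against the vanishing upper bound yields $\mbox{Gap}(\bar y_{k_j},\mbox{SOL}(X,F),H)\to0$, and continuity gives $\mbox{Gap}(\bar y,\mbox{SOL}(X,F),H)=0$. Since $\bar y\in\mbox{SOL}(X,F)$ and $H$ is monotone, Remark~\ref{rem:gap} applied to the inner VI yields $\bar y\in\mbox{SOL}(\mbox{SOL}(X,F),H)=X^*$, completing the argument. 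Beyond the summation-by-parts step, the point requiring care is this squeeze in (iii): the iterates need not be feasible for the inner VI, so the outer gap can be negative, and one must control it from below via the vanishing distance rather than relying on nonnegativity.
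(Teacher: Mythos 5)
Your proposal is correct and follows essentially the same route as the paper: part (ii) via Cauchy--Schwarz on the regularization term plus telescoping, part (i) via the nonnegativity of $F(x)^\top(y_{k+1}-x)$ for $x\in\mbox{SOL}(X,F)$ followed by the weighted telescoping (your summation-by-parts identity is exactly the paper's add-and-subtract of $0.5\gamma^{-1}\eta_{k-1}^{-1}\|x_k-x^*_F\|^2$ written in aggregate form, relying on the same monotonicity $\eta_k^{-1}\geq\eta_{k-1}^{-1}$ and the bound $\|x_k-x\|^2\leq 2D_X^2$), and part (iii) via compactness, continuity of the gap functions, and the same squeeze through the vanishing distance to $\mbox{SOL}(X,F)$. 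No gaps.
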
}
  \end{tcolorbox} 
\begin{proof}
\noindent (i) The lower bound holds using Lemma~ \ref{lem:bilevelVI_gap_ws}. Below, we show that the upper bound holds. Consider the inequality~\eqref{ineq:EG_lemma_merelymonotone}. Let $x^*_F \in X^*_F\triangleq \mbox{SOL}(X,F)$ denote an arbitrary solution to $\mbox{VI}(X,F)$. Thus, we have $F(x^*_F)\fyy{^\top}\left(y_{k+1}-x^*_F\right)\geq 0$, where we recall that $y_{k+1} \in X$. Invoking this relation, from~\eqref{ineq:EG_lemma_merelymonotone}, we have for all $k\geq 0$
\begin{align}\label{ineq:thm1_separated_ineq1}
 H(x^*_F)\fyy{^\top}\left(y_{k+1}-x^*_F\right)   & \leq 0.5(\gamma\eta_k)^{-1}\|x_k - x^*_F\|^2   -0.5(\gamma\eta_k)^{-1}\|x_{k+1} - x^*_F\|^2 ,
\end{align}
where we divided both sides by \far{$2\gamma\eta_k$. By adding} and subtracting $0.5\gamma^{-1}\eta_{k-1}^{-1}\|x_{k}-x^*_F\|^2$, we obtain
\begin{align*} 
 H(x^*_F)\fyy{^\top}\left(y_{k+1}-x^*_F\right)   & \leq 0.5\gamma^{-1}\eta_{k-1}^{-1}\|x_k - x^*_F\|^2 -0.5\gamma^{-1}\eta_k^{-1}\|x_{k+1} - x^*_F\|^2 \\
 &+0.5\gamma^{-1}\left(\eta_k^{-1}  - \eta_{k-1}^{-1}\right)\|x_k - x^*_F\|^2  .
\end{align*}
Note that $\far{\eta_k^{-1}  - \eta_{k-1}^{-1}\geq 0}$, because $\eta_k$ is \ssrtwo{diminishing}. We obtain
\begin{align*} 
 H(x^*_F)\fyy{^\top}\left(y_{k+1}-x^*_F\right)   & \leq 0.5\gamma^{-1}\eta_{k-1}^{-1}\|x_k - x^*_F\|^2 -0.5\gamma^{-1}\eta_k^{-1}\|x_{k+1} - x^*_F\|^2 \\
 &+ \gamma^{-1}\left(\eta_k^{-1}  - \eta_{k-1}^{-1}\right)D_X^2  .
\end{align*}
Summing both sides for $k=1,\ldots,K-1$, we obtain
 \begin{align*} 
 H(x^*_F)\fyy{^\top}\textstyle\sum_{k=1}^{K-1}\left(y_{k+1}- x^*_F\right)   & \leq 0.5\gamma^{-1}\eta_{0}^{-1}\|x_1 - x^*_F\|^2 -0.5\gamma^{-1}\eta_{K-1}^{-1}\|x_{K} - x^*_F\|^2 \\
 &+ \gamma^{-1}\left(\eta_{K-1}^{-1}  - \eta_{0}^{-1}\right)D_X^2.
\end{align*}
Substituting $k:=0$ in \eqref{ineq:thm1_separated_ineq1} and summing the resulting inequality with the preceding relation yields
  \begin{align*} 
 H(x^*_F)\fyy{^\top}\left(\textstyle\sum_{k=0}^{K-1}y_{k+1}-Kx^*_F\right)   & \leq 0.5\gamma^{-1}\eta_{0}^{-1}\|x_0 - x^*_F\|^2 -0.5\gamma^{-1}\eta_{K-1}^{-1}\|x_{K} - x^*_F\|^2 \\
 &+ \gamma^{-1}\left(\eta_{K-1}^{-1}  - \eta_{0}^{-1}\right)D_X^2.
\end{align*}
 Noting that $0.5\gamma^{-1}\eta_{0}^{-1}\|x_0 - x^*_F\|^2 \leq \gamma^{-1}\eta_{0}^{-1}D_X^2$ and dropping the nonpositive term $-0.5\gamma^{-1}\eta_{K-1}^{-1}\|x_{K} - x^*_F\|^2 $, we obtain 
\begin{align}\label{eqn:before_taking_sup}
 H(x^*_F)\fyy{^\top}\left(\bar{y}_{K}-x^*_F\right)   & \leq  (\gamma\eta_{K-1} K)^{-1}  D_X^2.
\end{align}
Taking the supremum on the both sides with respect to the set ${X}^*_F$ and invoking the definition of the dual gap function, we obtain $
 \mbox{Gap}\left(\bar{y}_K,X^*_F,H\right)      \leq  (\gamma\eta_{K-1} K)^{-1}  D_X^2$.

\noindent (ii) Next, we derive the bound on the infeasibility in the inner-level VI. Consider the inequality~\eqref{ineq:EG_lemma_merelymonotone}. From the Cauchy-Schwarz inequality, we obtain
\begin{align*}
F(x) \fyy{^\top}\left(y_{k+1}-x\right)   & \leq 0.5\gamma^{-1}\|x_k - x\|^2   -0.5\gamma^{-1}\|x_{k+1} - x\|^2 +\sqrt{2} \eta_kC_HD_X.
\end{align*}
Summing both sides for $k=0,\ldots,K-1$ and dividing both sides by $K$, we obtain 
\begin{align*} 
F(x) \fyy{^\top}\left(\bar{y}_K-x\right)   & \leq 0.5(\gamma K)^{-1}\left(\|x_0 - x\|^2   -\|x_{K} - x\|^2\right) +\sqrt{2} C_HD_X K^{-1}\textstyle\sum_{k=0}^{K-1}\eta_k .
\end{align*}
Dropping the nonpositive term and invoking the definition of $D_X$ again, we obtain 
\begin{align*} 
F(x) \fyy{^\top}\left(\bar{y}_K-x\right)   & \leq  (\gamma K)^{-1}D_X^2 +\sqrt{2} C_HD_X K^{-1}\textstyle\sum_{k=0}^{K-1}\eta_k .
\end{align*}
Taking the supremum on the both sides with respect to the set $X$ and invoking the definition of the dual gap function, we obtain the result in (ii). 

\noindent \far{(iii) Note that $\bar y_k \in X$ for all $k\geq 0$ and $X$ is compact. Invoking the Bolzano-Weierstrass theorem, $\{\bar y_k\}$ has  at least one accumulation point. Let us denote an arbitrary convergent subsequence of $\{\bar y_k\}$ by  $\{\bar y_{k_i}\}$ and let $\hat y$ denote the accumulation point. Recall that $\mbox{SOL}(X,F)$ is nonempty, compact, and convex (see Remark~\ref{rem:assum_monotone}) and $\mbox{Gap}\left(\bullet,\mbox{SOL}(X,F),H\right) $ is a continuous function~\cite[Chapter 2]{FacchineiPang2003}. By taking the limit along $\{\bar y_{k_i}\}$ on the relation in (i), we obtain $- B_H\, \mbox{dist}\left(\hat y,\mbox{SOL}(X,F)\right)  \leq \mbox{Gap}\left(\hat y,\mbox{SOL}(X,F),H\right)      \leq   0.$ 
Also, by taking the limit on the relation in (ii) we obtain $ \mbox{Gap}\left(\hat{y},X,F\right)=0 $. Invoking~\cite[Prop.~2.3.15]{FacchineiPang2003}, we have $\mbox{dist}\left(\hat{y},\mbox{SOL}(X,F)\right)=0$. Thus, we obtain $\mbox{Gap}\left(\hat y,\mbox{SOL}(X,F),H\right)  =0$ 
and again, in view of~\cite[Prop.~2.3.15]{FacchineiPang2003}, we conclude that $\hat y \in \mbox{SOL}\left(\mbox{SOL}\left(X,F\right),H\right).$}
\end{proof}
Next, we derive the convergence rate statements for the \ses{IR-EG$_{{\texttt{m,m}}}$} method. \far{The proof is presented in section~\ref{sec_app_thm:bilevelVI}.}
\begin{tcolorbox}[colback=blue!5!white,colframe=blue!55!black]
\far{\begin{theorem}[\ses{IR-EG$_{{\texttt{m,m}}}$}'s rate statements for bilevel VIs]\label{thm:bilevelVI}\em 
Consider problem~\eqref{eqn:bilevelVI}. Let Assumption~\ref{assum:bilevelVI_m} hold and $X$ be bounded. Assume that ${\gamma}^2(L_F^2+\eta_0^2L_H^2) \leq 0.5$.  

\medskip

{\bf [Case 1. Diminishing regularization]}
Suppose  $\eta_{k}=\frac{\eta_0}{(k+1)^{b}}$ where \ssrtwo{$b \in (0,1)$} is arbitrary. Then, the following results hold for all $K\geq 2^{1/(1-b)}$. 

\noindent {\bf (1-i)} $  \ssrtwo{- B_H\, \mbox{dist}\left(\bar{y}_K,\mbox{SOL}(X,F)\right) \leq } \mbox{Gap}\left(\bar{y}_K,\mbox{SOL}(X,F),H\right)      \leq  \left(\frac{ D_X^2}{\gamma\eta_0}\right)\frac{1}{{K}^{1-b}}$. 
 
\noindent {\bf (1-ii)} $ 0 \leq \mbox{Gap}\left(\bar{y}_K,X,F\right)       \leq   \left(\frac{D_X^2}{\gamma }\right)\frac{1}{K} +\left(\frac{\sqrt{2}\eta_0 C_HD_X}{1-b}\right)\frac{1}{K^b}$. 
 
 \noindent {\bf (1-iii)}  Further, if $\mbox{SOL}(X,F)$ is $\alpha$-weakly sharp \ses{of order $\mathcal{M}$}, then  
 
  $\mbox{Gap}\left(\bar{y}_K,\mbox{SOL}(X,F),H\right) \geq  -{B_H}\sqrt[\mathcal{M}]{ \alpha^{-1} \left( \left(\frac{D_X^2}{\gamma }\right) \frac{1}{K}+\left(\frac{\sqrt{2} C_HD_X \eta_0}{ 1-b}\right)\frac{1}{K^b}\right)}$.

\medskip

  {\bf [Case 2. Constant regularization with a priori known threshold \fyy{if} $\mathcal{M} =1$]} \far{\ses{Suppose $\mathcal{M}=1$}. Let $\eta$  be a constant regularization parameter that $\eta \leq \frac{\alpha}{2\|H(x^*)\|}$ where $x^*$ is an arbitrary solution to problem~\eqref{eqn:bilevelVI}.  Then, for all $K\geq 1$ the following results hold. 

 \noindent {\bf (2-i)} $\mbox{dist}(\bar{y}_K,\mbox{SOL}(X,F)) \leq \left(\frac{\|x_0-x^*\|^2}{\gamma \alpha}\right)\frac{1}{K}.$
  
 \noindent {\bf (2-ii)} $|\mbox{Gap}\left(\bar{y}_K,\mbox{SOL}(X,F),H\right)| \leq \max\left\{ \tfrac{D_X^2}{\gamma\eta}, \tfrac{ B_H\, \|x_0-x^*\|^2}{{ \gamma \alpha}  }\right\}\frac{1}{K}.$}

\end{theorem}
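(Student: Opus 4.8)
\textbf{Proof proposal for Theorem~\ref{thm:bilevelVI}, Case 2.}

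My plan is to prove the two constant-regularization bounds by exploiting the weak sharpness of order $\mathcal{M}=1$ to convert the inner-level gap control into a genuine distance contraction, and then to carefully separate the inner and outer error terms. First I would start from the key per-iteration inequality~\eqref{ineq:EG_lemma_merelymonotone} with constant $\eta_k\equiv\eta$, evaluated at $x=x^*$, where $x^*$ is the chosen solution of~\eqref{eqn:bilevelVI}. This gives
\begin{align*}
2\gamma\left(F(x^*)+\eta H(x^*)\right)^\top\left(y_{k+1}-x^*\right) \leq \|x_k-x^*\|^2 - \|x_{k+1}-x^*\|^2.
\end{align*}
Since $x^* \in \mbox{SOL}(X,F)$ and the VI is $\alpha$-weakly sharp of order $\mathcal{M}=1$, the first term is lower bounded via $F(x^*)^\top(y_{k+1}-x^*) \geq \alpha\,\mbox{dist}(y_{k+1},\mbox{SOL}(X,F))$, while the second term is controlled below by Cauchy–Schwarz, $\eta H(x^*)^\top(y_{k+1}-x^*) \geq -\eta\|H(x^*)\|\,\mbox{dist}(y_{k+1},\mbox{SOL}(X,F))$ (using that $y_{k+1}\in X$ so its distance to the solution set dominates its displacement against $x^*$). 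The threshold condition $\eta \leq \frac{\alpha}{2\|H(x^*)\|}$ is designed precisely so that $\alpha - \eta\|H(x^*)\| \geq \alpha/2$ survives, yielding $\gamma\alpha\,\mbox{dist}(y_{k+1},\mbox{SOL}(X,F)) \leq \|x_k-x^*\|^2 - \|x_{k+1}-x^*\|^2$ (modulo constant factors).

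For part (2-i), I would sum this telescoping inequality over $k=0,\ldots,K-1$, drop the nonpositive final term $-\|x_K-x^*\|^2$, and divide by $K$. By convexity of the distance function and Jensen's inequality applied to the average iterate $\bar{y}_K$, the left side lower-bounds $\gamma\alpha\,\mbox{dist}(\bar{y}_K,\mbox{SOL}(X,F))$, while the right side collapses to $\|x_0-x^*\|^2/K$, giving the claimed $\mathcal{O}(1/K)$ distance bound. For part (2-ii), the upper bound on the gap follows the same telescoping argument as Theorem~\ref{Thm:asymptotic-m.m}(i) but with constant $\eta$, producing a term of order $\frac{D_X^2}{\gamma\eta K}$; the lower bound then combines Lemma~\ref{lem:bilevelVI_gap_ws}(i), which gives $\mbox{Gap}(\bar{y}_K,\mbox{SOL}(X,F),H) \geq -B_H\,\mbox{dist}(\bar{y}_K,\mbox{SOL}(X,F))$, with the distance estimate from (2-i), yielding a lower bound of order $\frac{B_H\|x_0-x^*\|^2}{\gamma\alpha K}$. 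Taking the maximum of these two $\mathcal{O}(1/K)$ constants delivers the absolute-value bound.

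The main obstacle I anticipate is the sign management in the lower bound of the regularization term: weak sharpness controls $F(x^*)^\top(y_{k+1}-x^*)$ from below in terms of distance, but the $H$-term can be negative, and one must ensure the Cauchy–Schwarz bound is expressed against $\mbox{dist}(y_{k+1},\mbox{SOL}(X,F))$ rather than $\|y_{k+1}-x^*\|$. Since $x^*$ need not be the projection of $y_{k+1}$, these two quantities differ, so I would need to argue that $\|H(x^*)\|\,\|y_{k+1}-x^*\|$ can be replaced (after re-choosing $x^*$ as the nearest solution, or by absorbing the gap into the surviving positive $\alpha/2$ margin) without breaking the telescoping structure. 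Getting the threshold $\eta\leq\frac{\alpha}{2\|H(x^*)\|}$ to interact cleanly with this distance-versus-displacement discrepancy is the delicate step; everything else is routine telescoping and Jensen's inequality.
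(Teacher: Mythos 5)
Your overall architecture for Case 2 matches the paper's: substitute $x:=x^*$ into \eqref{ineq:EG_lemma_merelymonotone} with constant $\eta$, telescope, invoke weak sharpness of order $\mathcal{M}=1$ on the $F$-term, use the threshold $\eta\le\frac{\alpha}{2\|H(x^*)\|}$ to retain a positive $\alpha/2$ margin, and obtain (2-ii) by pairing the $b=0$ version of the (1-i) upper bound with Lemma~\ref{lem:bilevelVI_gap_ws}\,(i) and the distance estimate from (2-i). (The paper applies the weak sharpness and the $H$-term bound after averaging, directly at $\bar y_K$, rather than per iteration followed by Jensen; that ordering difference is immaterial.)

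There is, however, a genuine gap at exactly the step you flag as delicate, and neither of your two candidate fixes closes it. The inequality $H(x^*)^\top(y_{k+1}-x^*)\ge -\|H(x^*)\|\,\mbox{dist}(y_{k+1},X^*_F)$ does \emph{not} follow from Cauchy--Schwarz: since $\mbox{dist}(y_{k+1},X^*_F)\le\|y_{k+1}-x^*\|$, Cauchy--Schwarz only yields the weaker (more negative) bound $-\|H(x^*)\|\,\|y_{k+1}-x^*\|$, so your parenthetical justification runs in the wrong direction. Re-choosing $x^*$ as the nearest solution to $y_{k+1}$ changes the anchor point at every iteration and destroys the telescoping of $\|x_k-x^*\|^2$, and the discrepancy $\|y_{k+1}-x^*\|-\mbox{dist}(y_{k+1},X^*_F)$ is not itself controlled by the distance, so it cannot be absorbed into the $\alpha/2$ margin. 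The missing idea is to use the fact that $x^*$ solves the \emph{outer} VI, $\mbox{VI}(X^*_F,H)$: decompose $y_{k+1}-x^*=\left(y_{k+1}-\Pi_{X^*_F}[y_{k+1}]\right)+\left(\Pi_{X^*_F}[y_{k+1}]-x^*\right)$, note that $H(x^*)^\top\left(\Pi_{X^*_F}[y_{k+1}]-x^*\right)\ge 0$ because $\Pi_{X^*_F}[y_{k+1}]\in X^*_F$ and $x^*\in\mbox{SOL}(X^*_F,H)$, and apply Cauchy--Schwarz only to the first piece, whose norm is exactly $\mbox{dist}(y_{k+1},X^*_F)$. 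With that decomposition in place, the rest of your telescoping and Jensen argument goes through as you describe.
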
}
 \end{tcolorbox}

\far{The proof of the next corollary is presented in section~\ref{sec:proof_lemma_cor:nested}.}
\begin{corollary}[\ses{ IR-EG$_{{\texttt{m,m}}}$}'s rate statements \far{for VI-constrained optimization}]\label{cor:nested}\em 
Consider Theorem~\ref{thm:bilevelVI}. In the special case when $H$ is the gradient map of a convex $L$-smooth function $f$, problem~\eqref{eqn:bilevelVI} captures problem~\eqref{eqn:optVI} and the results hold for $L_H:=L$, $C_H:=C_f$, \far{and $B_H:=B_f$}. In particular, the bounds in (1-i), \far{(1-iii), and (2-ii)} hold for the metric $f(\bar{y}_K) - \inf_{x \in {\tiny \mbox{SOL}(X,F)}} f(x)$.  
\end{corollary}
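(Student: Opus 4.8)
The plan is to specialize the proofs of Theorems~\ref{Thm:asymptotic-m.m} and~\ref{thm:bilevelVI} to the case $H(x):=\nabla f(x)$, observing first that the three parameter substitutions are immediate. Since $f$ is $L$-smooth, $\nabla f$ is $L$-Lipschitz, so $L_H:=L$; and since $H=\nabla f$, the suprema defining $C_H$ and $B_H$ in Definition~\ref{def:terms_bounded} coincide with those defining $C_f$ and $B_f$, whence $C_H:=C_f$ and $B_H:=B_f$. With these identifications, every hypothesis of the cited theorems (monotonicity and $L_H$-Lipschitz continuity of $H$, and the stepsize condition ${\gamma}^2(L_F^2+\eta_0^2 L_H^2)\leq 0.5$) holds verbatim, so the gap-function bounds in (1-i), (1-iii), and (2-ii) already apply.

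The substantive step is to show that the same upper bounds govern the suboptimality metric $f(\bar y_K)-\inf_{x\in\mbox{SOL}(X,F)}f(x)$. Here I cannot simply transfer the gap bound, because convexity gives $\nabla f(x^*_F)^\top(\bar y_K-x^*_F)\leq f(\bar y_K)-f(x^*_F)$; that is, the gap function lower-bounds the suboptimality, which is the wrong direction. Instead I re-run the telescoping argument of Theorem~\ref{Thm:asymptotic-m.m}(i) starting from the function-value recursion~\eqref{ineq:EG_lemma_merelymonotone2} in place of~\eqref{ineq:EG_lemma_merelymonotone}. Fixing $x:=x^*$, where $x^*\in\mbox{SOL}(X,F)$ attains $\inf_{\mbox{SOL}(X,F)}f$ (equivalently, $x^*$ is a solution of the bilevel VI~\eqref{eqn:bilevelVI}, which exists and minimizes $f$ over the convex compact set $\mbox{SOL}(X,F)$), and using $F(x^*)^\top(y_{k+1}-x^*)\geq 0$, the recursion reduces to $2{\gamma}\eta_k(f(y_{k+1})-f(x^*))\leq\|x_k-x^*\|^2-\|x_{k+1}-x^*\|^2$. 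Dividing by $2{\gamma}\eta_k$ and performing the identical add-and-subtract $0.5\gamma^{-1}\eta_{k-1}^{-1}\|x_k-x^*\|^2$ manipulation as in the proof of Theorem~\ref{Thm:asymptotic-m.m}(i) yields, after summation, $\sum_{k=0}^{K-1}(f(y_{k+1})-f(x^*))\leq(\gamma\eta_{K-1})^{-1}D_X^2$. Invoking Jensen's inequality through convexity of $f$, namely $K f(\bar y_K)\leq\sum_{k=0}^{K-1}f(y_{k+1})$, converts the left side into $K(f(\bar y_K)-f(x^*))$, and dividing by $K$ reproduces exactly the upper bound of (1-i) with $\mbox{Gap}$ replaced by $f(\bar y_K)-\inf_{\mbox{SOL}(X,F)}f$. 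The constant-regularization bound in (2-ii) follows the same route with the constant-$\eta$ specialization of the telescoping.

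For the lower bounds matching (1-iii) and (2-ii), I invoke Lemma~\ref{lem:bilevelVI_gap_ws}(ii), which gives $f(\bar y_K)-f(x^*)\geq-\|\nabla f(x^*)\|\,\mbox{dist}(\bar y_K,\mbox{SOL}(X,F))$; bounding $\|\nabla f(x^*)\|\leq B_f=B_H$ and substituting the infeasibility estimate from (1-ii) (respectively the distance estimate (2-i)) together with the weak-sharpness bound of Lemma~\ref{lem:bilevelVI_gap_ws}(iii) reproduces the stated lower bounds. The main obstacle is precisely the directional mismatch noted above: one must resist reusing the gap-function bound and instead re-derive the averaged estimate from the function-value recursion~\eqref{ineq:EG_lemma_merelymonotone2}, with Jensen's inequality supplying the final conversion to the averaged iterate $\bar y_K$.
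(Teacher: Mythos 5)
Your proposal is correct and follows exactly the route the paper intends: its proof of Corollary~\ref{cor:nested} consists of the single remark that one should invoke~\eqref{ineq:EG_lemma_merelymonotone2} and repeat the steps of Theorem~\ref{thm:bilevelVI}, which is precisely the telescoping-plus-Jensen argument you carry out. Your observation that the gap bound cannot be transferred directly because convexity runs in the wrong direction is the right justification for why the function-value recursion~\eqref{ineq:EG_lemma_merelymonotone2} is needed, and your treatment of the lower bounds via Lemma~\ref{lem:bilevelVI_gap_ws}(ii)--(iii) matches the paper's.
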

\begin{remark}
\far{By Theorem~\ref{thm:bilevelVI}, under the weak sharpness assumption and availability of the threshold $\tfrac{\alpha}{B^*_H}$, the iteration complexity of $\mathcal{O}(\tilde{\epsilon}^{-1})$ is achieved where $\tilde{\epsilon}$ is a scalar such that $|\mbox{Gap}\left(\bar{y}_K,\mbox{SOL}(X,F),H\right)| \leq \tilde{\epsilon}$. This is indeed the best known iteration complexity for addressing \far{monotone} Lipschitz continuous single-level VIs.  In \ses{the} absence of  the threshold, the} results in Theorem~\ref{thm:bilevelVI} imply that when $b:=0.5$, an iteration complexity of $\mathcal{O}(\epsilon^{-2})$ can be achieved for both the outer \far{(in terms of the upper bound)} and inner-level VIs. This improves the existing complexity for bilevel VIs. Also, for VI-constrained optimization, this improves the existing complexity $\mathcal{O}(\epsilon^{-4})$ in~\cite{doi:10.1137/20M1357378,kaushik2023incremental,jalilzadeh2024stochastic} by leveraging smoothness of $f$ and Lipschitz continuity of $F$. \far{We also note that when $\mathcal{M}>1$, the  complexity of $\mathcal{O}(\epsilon^{-2\ses{\mathcal{M}}})$ for the lower bound on the outer-level VI's gap function is novel and did not exist before.}
\end{remark}

\section{Strongly monotone bilevel VIs and VI-constrained strongly convex optimization}\label{sec:smon}
In this section, \far{our main assumptions are presented in the following.} 
\begin{assumption}\label{assum:bilevelVI_sm} \em
Consider problem~\eqref{eqn:bilevelVI}. Let the following statements hold. 

\noindent (a) Set $X$ is nonempty, closed, and convex.

\noindent (b) Mapping $F:X \to \mathbb{R}^n$ is $L_F$-Lipschitz and \far{monotone} on $X$.

\noindent  (c) Mapping $H:X \to \mathbb{R}^n$ is  $L_H$-Lipschitz and $\mu_H$-strongly monotone on $X$.

\noindent (d) The set $\mbox{SOL}(X,F)$ is nonempty.
\end{assumption}
\begin{remark}
Let us briefly comment on the existence and uniqueness of solutions to \eqref{eqn:bilevelVI} under the above assumptions. In view of Remark~\ref{rem:assum_monotone}, Assumption~\ref{assum:bilevelVI_sm} implies that $\mbox{SOL}(X,F)$ is nonempty, closed, and convex. Then, from the strong monotonicity of the mapping $H$, it follows that a solution to problem~\eqref{eqn:bilevelVI} exists and is unique. 
\end{remark}

\subsection{Algorithm outline}
\far{We develop \far{IR-EG$_{{\texttt{s,m}}}$},} presented by Algorithm~\ref{alg:IR-EG-s}, to address two classes of problems as follows. (i) The bilevel VI problem \eqref{eqn:bilevelVI} when $H$ is $\mu_H$-strongly monotone and Lipschitz continuous, and $F$ is \far{monotone} and Lipschitz continuous. (ii) The VI-constrained optimization problem~\eqref{eqn:optVI} when $f$ is $L$-smooth and strongly convex, by setting $H(x):=\nabla f(x)$. Note that \far{IR-EG$_{{\texttt{s,m}}}$} is a variant of \ses{IR-EG$_{{\texttt{m,m}}}$} \far{where we employ} a weighted averaging sequence where the weights, $\theta_k$, are updated geometrically characterized by the stepsize $\gamma$, regularization parameter \far{$\eta_k$}, and $\mu_H$. \far{In this section, we first derive explicit convergence guarantees for this method under diminishing update rules. We will then present the convergence guarantees when a constant regularization parameter is used.} 

\begin{algorithm}[H]
\caption{\far{IR-EG$_{{\texttt{s,m}}}$} for~\eqref{eqn:bilevelVI} with strongly monotone $H$ and \far{monotone} $F$}
\label{alg:IR-EG-s}
\begin{algorithmic}[1]
\STATE \textbf{input:} Initial vectors $x_0, \far{\bar {y}_0 }\in X$, a stepsize $\gamma > 0$, \far{regularization parameter sequence $\{\eta_k\}$}, such that {${\gamma}^2L_F^2+{\gamma}\far{\eta_k}\mu_H+{\gamma}^2\far{\eta_k^2}L_H^2\leq 0.5$},  $\theta_0 = \frac{1}{1- \gamma \far{\eta_0} \mu_H }$, and $ \Gamma_0 =0$.
\FOR{$k = 0, 1, \ldots, K-1$}

\STATE $y_{k+1}  := \Pi_X\left[x_k - {\gamma}\left(F(x_k)+\far{\eta_k} H(x_k)\right)\right]   \label{line:update_y_S H}$

\STATE $x_{k+1} := \Pi_X\left[x_k - {\gamma}\left(F(y_{k+1}) +\far{\eta_k} H(y_{k+1})\right)\right]  \label{line:update_x_S H}$

\STATE $\bar{y}_{k+1} := \frac{\Gamma_k \bar{y}_{k} + \far{\eta_k}\theta_{k} y_{k+1}}{\Gamma_{k+1}}$
 
\STATE $ \Gamma_{k+1} := \Gamma_k + \far{\eta_k}\theta_k $ and $\theta_{k+1} :=\frac{\theta_k}{1 - \gamma \far{\eta_{k+1}} \mu_H} $
\ENDFOR
\STATE {\bf return} $\bar{y}_K$
\end{algorithmic}
\end{algorithm} 

\subsection{Convergence analysis for \far{IR-EG$_{{\texttt{s,m}}}$}}
The \far{following} result provides a recursive relation that will be utilized in the rate analysis. \far{The proof of this result is presented in section~\ref{sec:proof_lemma_lem:s_monotone_lemma_ineq}.}
\begin{lemma}\label{lem:s_monotone_lemma_ineq}\em
Consider problem~\eqref{eqn:bilevelVI}. Let the sequence $\{\bar{y}_k\}$ be generated by Algorithm~\ref{alg:IR-EG-s}. Let Assumption~\ref{assum:bilevelVI_sm} hold and \far{$x\in X$ be an arbitrary vector}. 

\noindent (i) Let {${\gamma}^2L_F^2+{\gamma}\far{\eta_k}\mu_H+{\gamma}^2\far{\eta_k^2}L_H^2\leq 0.5$} \far{for all $k\geq 0$. Then,} for all $k\geq 0$, we have
\begin{align}\label{ineq:EG_lemma_smonotone}
2{\gamma}\left(F(x)+\far{\eta_k} H(x)\right)\fyy{^\top}\left(y_{k+1}-x\right)   & \leq (1-{\gamma}\far{\eta_k}\mu_H)\|x_k - x\|^2   -\|x_{k+1} - x\|^2.
\end{align}

\noindent (ii) Let $H(x) := \nabla f(x)$ where $f$ is a $\mu$-strongly convex and $L$-smooth function. Suppose ${\gamma}^2L_F^2+0.5{\gamma}\far{\eta_k}\mu+{\gamma}^2\far{\eta_k^2}L^2\leq 0.5$ \far{for all $k\geq 0$. Then,} for all $k\geq 0$, we have
\begin{align}\label{ineq:EG_lemma_smonotone2}
2{\gamma} F(x)\fyy{^\top}\left(y_{k+1}-x\right) +2{\gamma}\far{\eta_k}(f(y_{k+1})-f(x))  & \leq (1-\tfrac{\gamma\far{\eta_k}\mu}{2})\|x_k - x\|^2   -\|x_{k+1} - x\|^2.
\end{align}
\end{lemma}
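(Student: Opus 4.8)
The target is a recursive inequality for the extragradient iterates generated by Algorithm~\ref{alg:IR-EG-s}, in two parts: part (i) for the general strongly monotone $H$ under the stepsize condition ${\gamma}^2L_F^2+{\gamma}\eta_k\mu_H+{\gamma}^2\eta_k^2L_H^2\leq 0.5$, and part (ii) for the optimization special case $H=\nabla f$ with $f$ being $\mu$-strongly convex and $L$-smooth under a slightly relaxed condition involving $0.5{\gamma}\eta_k\mu$ in place of ${\gamma}\eta_k\mu_H$. The plan is to follow the standard extragradient analysis but carefully extract the extra negative term coming from strong monotonicity, which is precisely what upgrades the coefficient of $\|x_k-x\|^2$ from $1$ (as in Lemma~\ref{lem:monotone_lemma_ineq}(ii)) to $(1-{\gamma}\eta_k\mu_H)$.

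\textbf{Main steps for part (i).} First I would start from the descent-lemma-style inequality in Lemma~\ref{lem:monotone_lemma_ineq}(i), which is proved using the projection theorem (Lemma~\ref{lem:Projection theorm}) on the two projection steps defining $y_{k+1}$ and $x_{k+1}$; this gives
\begin{align*}
\|x_{k+1}-x\|^2 &\leq \|x_k-x\|^2 - \|y_{k+1}-x_k\|^2 + 2{\gamma}^2(L_F^2+\eta_k^2L_H^2)\|y_{k+1}-x_k\|^2\\
&\quad + 2{\gamma}\left(F(y_{k+1})+\eta_k H(y_{k+1})\right)^\top(x-y_{k+1}).
\end{align*}
Next I would rewrite the last inner product by adding and subtracting the map evaluated at $x$, i.e. decompose $\left(F(y_{k+1})+\eta_k H(y_{k+1})\right)^\top(x-y_{k+1})$ as $\left(F(x)+\eta_k H(x)\right)^\top(x-y_{k+1})$ plus the difference term $\left(F(y_{k+1})-F(x)+\eta_k(H(y_{k+1})-H(x))\right)^\top(x-y_{k+1})$. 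By monotonicity of $F$ this difference is bounded above, but the crucial point is that strong monotonicity of $H$ contributes a term $-{\gamma}\eta_k\mu_H\|y_{k+1}-x\|^2$; I then relate $\|y_{k+1}-x\|^2$ back to $\|x_k-x\|^2$ (or absorb it against the $-\|y_{k+1}-x_k\|^2$ term via the parallelogram identity) so that the strong-monotonicity gain appears as the $(1-{\gamma}\eta_k\mu_H)$ coefficient. The stepsize hypothesis ${\gamma}^2L_F^2+{\gamma}\eta_k\mu_H+{\gamma}^2\eta_k^2L_H^2\leq 0.5$ is exactly what makes the coefficient of $\|y_{k+1}-x_k\|^2$ (after collecting the $2{\gamma}^2(L_F^2+\eta_k^2L_H^2)$ term, the $-1$, and any cross terms generated by completing the square against the strong-monotonicity contribution) nonpositive, so that term can be dropped, yielding exactly \eqref{ineq:EG_lemma_smonotone}.

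\textbf{Part (ii) and the main obstacle.} For part (ii) the strategy is identical except that I replace the strong monotonicity inequality for $H$ with the corresponding inequalities for $\nabla f$: convexity plus $L$-smoothness gives a descent-type bound and $\mu$-strong convexity supplies $\nabla f(y_{k+1})^\top(y_{k+1}-x)\geq f(y_{k+1})-f(x)+\tfrac{\mu}{2}\|y_{k+1}-x\|^2$, which is why the function-value gap $2{\gamma}\eta_k(f(y_{k+1})-f(x))$ appears and why the coefficient becomes $(1-\tfrac{{\gamma}\eta_k\mu}{2})$ with a factor $\tfrac12$ rather than a full $\mu$ — this in turn explains the relaxed stepsize condition $0.5{\gamma}\eta_k\mu$ in the hypothesis. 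The step I expect to be most delicate is the bookkeeping of the $\|y_{k+1}-x\|^2$ term: it must be converted into $\|x_k-x\|^2$ (to produce the recursion's leading coefficient) without spawning residual terms that the stepsize condition cannot control. I anticipate handling this by a careful completion of squares that simultaneously uses the $-\|y_{k+1}-x_k\|^2$ slack and the Lipschitz bounds $\|F(y_{k+1})-F(x)\|\leq L_F\|y_{k+1}-x\|$ and $\|H(y_{k+1})-H(x)\|\leq L_H\|y_{k+1}-x\|$; getting all constants to line up with the precise threshold $0.5$ is the routine-but-error-prone core of the argument.
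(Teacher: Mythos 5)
Your proposal is correct and follows essentially the same route as the paper: start from the bound in Lemma~\ref{lem:monotone_lemma_ineq}(i), shift the map evaluation from $y_{k+1}$ to $x$ using monotonicity of $F$ together with strong monotonicity of $H$ (resp.\ strong convexity of $f$), convert the resulting $-\|y_{k+1}-x\|^2$ gain into $-\tfrac12\|x_k-x\|^2+\|x_k-y_{k+1}\|^2$, and drop the $\|x_k-y_{k+1}\|^2$ term using the stated stepsize condition. The only superfluous element is your anticipated extra use of the Lipschitz bounds between $y_{k+1}$ and $x$ — the Lipschitz constants already enter entirely through Lemma~\ref{lem:monotone_lemma_ineq}(i), so no further completion of squares is needed.
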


In the following result, we show that the generated iterate by \far{IR-EG$_{{\texttt{s,m}}}$} is indeed a weighted average sequence. \far{The proof is presented in section~\ref{sec:proof_lemma_lem:ave_REG}.} 
\begin{lemma}[Weighted averaging in \far{IR-EG$_{{\texttt{s,m}}}$}]\label{lem:ave_REG}\em 
Let the sequence $\{\bar{y}_k\}$ be generated by Algorithm~\ref{alg:IR-EG-s} where $\theta_k \triangleq \far{\frac{1}{\prod_{t=0}^{k}(1- \gamma \eta_t \mu_H )}}$ for $k\geq 0$. Let us define the weights $\lambda_{k,K} \triangleq \frac{\far{\eta_k}\theta_k}{\sum_{j=0}^{K-1}\far{\eta_j}\theta_j}$ for $k \in \{0,\ldots, K-1\}$ and $K\geq 1$. Then, for any $K\geq 1$, we have $\bar{y}_{K} = \sum_{k=0}^{K-1} \lambda_{k,K} y_{k+1}$. Also, when $X$ is a convex set, we have $\bar y_K \in X$.
\end{lemma}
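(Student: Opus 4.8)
The plan is to prove Lemma~\ref{lem:ave_REG} by establishing the telescoping/unrolling identity for the averaging recursion and then verifying convexity of the resulting combination. Recall that Algorithm~\ref{alg:IR-EG-s} defines $\bar{y}_{k+1} = \frac{\Gamma_k \bar{y}_k + \eta_k \theta_k y_{k+1}}{\Gamma_{k+1}}$ with $\Gamma_{k+1} = \Gamma_k + \eta_k \theta_k$ and $\Gamma_0 = 0$. First I would observe that multiplying the averaging update by $\Gamma_{k+1}$ gives $\Gamma_{k+1}\bar{y}_{k+1} = \Gamma_k \bar{y}_k + \eta_k \theta_k y_{k+1}$, so the quantity $S_k \triangleq \Gamma_k \bar{y}_k$ satisfies the clean recursion $S_{k+1} = S_k + \eta_k \theta_k y_{k+1}$.

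From here I would unroll $S_K$ by induction (or by summing the recursion) starting from $S_0 = \Gamma_0 \bar{y}_0 = 0$, which yields $S_K = \sum_{k=0}^{K-1} \eta_k \theta_k \, y_{k+1}$. Simultaneously, unrolling the recursion for $\Gamma$ with $\Gamma_0 = 0$ gives $\Gamma_K = \sum_{k=0}^{K-1} \eta_k \theta_k$. Dividing these two identities produces
\begin{align*}
\bar{y}_K = \frac{S_K}{\Gamma_K} = \frac{\sum_{k=0}^{K-1} \eta_k \theta_k \, y_{k+1}}{\sum_{j=0}^{K-1} \eta_j \theta_j} = \sum_{k=0}^{K-1} \lambda_{k,K}\, y_{k+1},
\end{align*}
which is exactly the claimed weighted-average form once we match the definition of $\lambda_{k,K}$.

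Next I would confirm that the weights $\lambda_{k,K}$ form a valid convex combination. Here the key is the positivity of each $\theta_k$. Since $\theta_0 = \frac{1}{1-\gamma\eta_0\mu_H}$ and the stepsize condition $\gamma^2 L_F^2 + \gamma\eta_k\mu_H + \gamma^2\eta_k^2 L_H^2 \leq 0.5$ forces $\gamma\eta_k\mu_H \leq 0.5 < 1$, every factor $(1-\gamma\eta_t\mu_H)$ is strictly positive, hence $\theta_k = \prod_{t=0}^{k}(1-\gamma\eta_t\mu_H)^{-1} > 0$. Combined with $\eta_k > 0$, each $\lambda_{k,K} \geq 0$, and by construction $\sum_{k=0}^{K-1}\lambda_{k,K} = 1$. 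Therefore, when $X$ is convex and each $y_{k+1} \in X$ (which holds since each $y_{k+1}$ is a Euclidean projection onto $X$), the convex combination $\bar{y}_K$ lies in $X$.

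This lemma is essentially bookkeeping, so I do not anticipate a genuine obstacle; the only point requiring mild care is verifying that the closed-form $\theta_k = \prod_{t=0}^{k}(1-\gamma\eta_t\mu_H)^{-1}$ stated in the lemma is consistent with the algorithm's recursive update $\theta_{k+1} = \theta_k/(1-\gamma\eta_{k+1}\mu_H)$ and initialization $\theta_0 = 1/(1-\gamma\eta_0\mu_H)$, which follows by a one-line induction. The rest reduces to telescoping the two coupled recursions for $S_k$ and $\Gamma_k$ and reading off the quotient.
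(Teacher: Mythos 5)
Your proof is correct and follows essentially the same route as the paper's: the paper runs the induction directly on $\bar{y}_K = \sum_{k=0}^{K-1}\lambda_{k,K}y_{k+1}$ using $\Gamma_K=\sum_{k=0}^{K-1}\eta_k\theta_k$, while you telescope the unnormalized quantity $\Gamma_k\bar{y}_k$, which is the same computation repackaged. Your explicit check that $\gamma\eta_k\mu_H<1$ makes each $\theta_k>0$ (so the weights are genuinely nonnegative) is a small detail the paper leaves implicit, but nothing substantive differs.
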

\far{Next, we derive error bounds and show asymptotic convergence for IR-EG$_{{\texttt{s,m}}}$.
 \begin{tcolorbox}[colback=blue!5!white,colframe=blue!55!black]
\far{\begin{theorem}[Error bounds and asymptotic convergence of IR-EG$_{{\texttt{s,m}}}$]\label{prop:unify_IR-EG_sm}\em
Consider problem~\eqref{eqn:bilevelVI}. Let the sequence $\{\bar{y}_k\}$ be generated by Algorithm~\ref{alg:IR-EG-s}, Assumption~\ref{assum:bilevelVI_sm} hold, and the set $X$ be bounded. Suppose $\{\eta_k\}$ is nonincreasing and ${\gamma}^2L_F^2+{\gamma}\eta_k\mu_H+{\gamma}^2\eta_k^2L_H^2\leq 0.5$ for $k\geq 0$.  

\noindent (i) For all $K\geq 1$, we have $$- B_H\, \mbox{dist}\left(\bar{y}_K,\mbox{SOL}(X,F)\right) \leq\mbox{Gap}\left(\bar{y}_K,\mbox{SOL}(X,F),H\right)      \leq   \gamma^{-1} D_X^2/ \textstyle\sum_{j=0}^{K-1} \eta_j\theta_j  .$$ 
 
 \noindent (ii) For all $K\geq 1$, we have $$ 0 \leq \mbox{Gap}\left(\bar{y}_K,X,F\right)       \leq \left({\gamma }^{-1}\eta_0 D_X^2 +  C_H \sqrt{2} D_X\textstyle\sum_{j=0}^{K-1} \eta_j^2\theta_j \right)/ \textstyle\sum_{j=0}^{K-1} \eta_j\theta_j .$$ 
 
\noindent (iii) Suppose $\sum_{j=0}^{\infty} \eta_j\theta_j = \infty$ and $\lim_{k\to \infty}\left(\sum_{j=0}^{k} \eta_j^2\theta_j  / \textstyle\sum_{j=0}^{k} \eta_j\theta_j\right)=0$. Then, $\{ \bar y_k\}$ converges to the unique solution of problem~\eqref{eqn:bilevelVI}.
\end{theorem}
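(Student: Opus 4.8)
The plan is to mirror the structure of the proof of Theorem~\ref{Thm:asymptotic-m.m}, but to replace the uniform averaging by the geometric weighting $\lambda_{k,K}\propto \eta_k\theta_k$ supplied by Lemma~\ref{lem:ave_REG} and to exploit the telescoping identity $\theta_k(1-\gamma\eta_k\mu_H)=\theta_{k-1}$ induced by the update rule for $\theta_k$ (with the convention $\theta_0(1-\gamma\eta_0\mu_H)=1$). The whole argument reduces the recursion~\eqref{ineq:EG_lemma_smonotone} to a weighted telescoping sum matched against the averaging weights.

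For part (i), I would start from~\eqref{ineq:EG_lemma_smonotone} evaluated at an arbitrary $x^*_F\in\mbox{SOL}(X,F)$. Since $y_{k+1}\in X$ and $x^*_F$ solves $\mbox{VI}(X,F)$, we have $F(x^*_F)^\top(y_{k+1}-x^*_F)\ge 0$, which lets me discard the $F$-term and retain only $\eta_k H(x^*_F)^\top(y_{k+1}-x^*_F)$ on the left. Dividing by $2\gamma$ and multiplying by $\theta_k$ turns the right-hand side into $\tfrac{\theta_{k-1}}{2\gamma}\|x_k-x^*_F\|^2-\tfrac{\theta_k}{2\gamma}\|x_{k+1}-x^*_F\|^2$ by the telescoping identity, so summing over $k=0,\dots,K-1$ collapses to $\tfrac{1}{2\gamma}\|x_0-x^*_F\|^2$ after dropping the nonpositive endpoint term. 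Bounding $\|x_0-x^*_F\|^2\le 2D_X^2$, dividing by $\sum_{j}\eta_j\theta_j$, and invoking $\bar y_K=\sum_k\lambda_{k,K}y_{k+1}$ yields $H(x^*_F)^\top(\bar y_K-x^*_F)\le \gamma^{-1}D_X^2/\sum_j\eta_j\theta_j$ uniformly in $x^*_F$; taking the supremum gives the upper bound, while the lower bound is immediate from Lemma~\ref{lem:bilevelVI_gap_ws}(i).

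For part (ii), I would instead fix an arbitrary $x\in X$ in~\eqref{ineq:EG_lemma_smonotone}, isolate $F(x)^\top(y_{k+1}-x)$, and control the residual $H$-term by Cauchy--Schwarz together with $\|H(x)\|\le C_H$ and $\|y_{k+1}-x\|\le\sqrt2 D_X$. The hard part will be that the telescoping multiplier $\theta_k$ clashes with the averaging weight $\eta_k\theta_k$: multiplying the recursion by $\eta_k\theta_k$ produces the coefficient $\eta_k\theta_k(1-\gamma\eta_k\mu_H)=\eta_k\theta_{k-1}$ in front of $\|x_k-x\|^2$, which no longer matches the negative term $\eta_{k-1}\theta_{k-1}$ needed for a clean telescope. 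I would resolve this using that $\{\eta_k\}$ is nonincreasing, so that $\eta_k\theta_{k-1}\le\eta_{k-1}\theta_{k-1}$; this ``telescoping with slack'' collapses the distance terms to the single boundary term $\tfrac{\eta_0}{2\gamma}\|x_0-x\|^2\le \gamma^{-1}\eta_0 D_X^2$, while the Cauchy--Schwarz residuals accumulate to $\sqrt2 C_H D_X\sum_k\eta_k^2\theta_k$. Dividing by $\sum_j\eta_j\theta_j$ and taking the supremum over $x\in X$ gives the upper bound, and the lower bound $\mbox{Gap}(\bar y_K,X,F)\ge 0$ follows since $\bar y_K\in X$ by Lemma~\ref{lem:ave_REG} and Remark~\ref{rem:gap}.

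For part (iii), I would argue via accumulation points exactly as in Theorem~\ref{Thm:asymptotic-m.m}(iii). The hypotheses $\sum_j\eta_j\theta_j=\infty$ and $\sum_{j\le k}\eta_j^2\theta_j/\sum_{j\le k}\eta_j\theta_j\to0$ drive both upper bounds in (i) and (ii) to zero. Taking any convergent subsequence $\bar y_{K_i}\to\hat y$, which exists since $X$ is compact, and using continuity of the gap functions, the part-(ii) bound forces $\mbox{Gap}(\hat y,X,F)=0$, hence $\hat y\in\mbox{SOL}(X,F)$ by Remark~\ref{rem:gap}; the part-(i) bounds then force $\mbox{Gap}(\hat y,\mbox{SOL}(X,F),H)=0$, so $\hat y\in X^*$. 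Because strong monotonicity of $H$ renders the solution unique, every accumulation point equals $x^*$, and a bounded sequence with a single accumulation point converges; thus $\bar y_K\to x^*$. I expect the part-(ii) telescoping to be the only delicate step, with everything else being bookkeeping built on the two preceding lemmas.
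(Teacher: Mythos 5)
Your proposal is correct and follows essentially the same route as the paper's proof: the same telescoping via $\theta_k(1-\gamma\eta_k\mu_H)=\theta_{k-1}$ matched to the weighted average of Lemma~\ref{lem:ave_REG} in part (i), the same Cauchy--Schwarz plus ``telescoping with slack'' from the nonincreasing $\{\eta_k\}$ in part (ii), and the same accumulation-point argument (which the paper only sketches by reference to Theorem~\ref{Thm:asymptotic-m.m}(iii)) in part (iii). The only cosmetic difference is that in part (ii) you enlarge the positive coefficient $\eta_k\theta_{k-1}\le\eta_{k-1}\theta_{k-1}$ whereas the paper shrinks the negative one $\eta_k\theta_k\ge\eta_{k+1}\theta_k$; both yield the same boundary term $\gamma^{-1}\eta_0 D_X^2$.
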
}
\end{tcolorbox}
\begin{proof}
\noindent (i)  The lower bound holds from Lemma~\ref{lem:bilevelVI_gap_ws}. Next, we show that the upper bound holds. Substituting $x:= x^*_F \in \mbox{SOL}(X,F)$ in \eqref{ineq:EG_lemma_smonotone}, for any $k\geq 0$ we have 
\begin{align*}
\left(F(x^*_F)+\eta_k H(x^*_F)\right)\fyy{^\top}\left(y_{k+1}-x^*_F\right) &\leq \frac{1}{2\gamma} (1-\gamma\eta_k\mu_H)\|x_k - x^*_F\|^2 - \frac{1}{2\gamma}\|x_{k+1} -  x^*_F\|^2.
\end{align*}
Since $x^*_F \in \mbox{SOL}(X,F)$ and $y_{k+1} \in X$, we have $F(x^*_F)\fyy{^\top}\left(y_{k+1}-x^*_F\right) \geq 0$. We obtain
\begin{align}\label{eqn:prop_unif_1}
\eta_k H(x^*_F)\fyy{^\top}\left(y_{k+1}-x^*_F\right) &\leq  \frac{1}{2\gamma} (1-\gamma\eta_k\mu_H)\|x_k - x^*_F\|^2 - \frac{1}{2\gamma}\|x_{k+1} -  x^*_F\|^2.
\end{align}
Consider \eqref{eqn:prop_unif_1} for $k=0$. Multiplying the both sides by $\theta_0=\far{\frac{1}{1- \gamma \eta_0 \mu_H }}$, we obtain 
\begin{align}\label{eqn:prop_unif_2}
 H(x^*_F)\fyy{^\top}\left(\eta_0\theta_0y_{1}-\eta_0\theta_0x^*_F\right) &\leq  \frac{1}{2\gamma}\|x_0 - x^*_F\|^2 - \frac{1}{2\gamma}\theta_0\|x_{1} -  x^*_F\|^2.
\end{align} 
If $K=1$, then taking the supremum on both sides with respect to $x^*_F \in\mbox{SOL}( X, F)$ and recalling the definition of dual gap function, we obtain (i). Suppose $K\geq 2$. Consider \eqref{eqn:prop_unif_1} for $k\geq 1$. Recall that $\theta_k \triangleq \far{\frac{1}{\prod_{t=0}^{k}(1- \gamma \eta_t \mu_H )}}$ for $k\geq 0$. Multiplying both sides of the preceding inequality by $\theta_k$, we obtain, for any $k\geq 1$,
\begin{align*}
 H(x^*_F)\fyy{^\top}\left(\eta_k\theta_k(y_{k+1}-x^*_F)\right) &\leq  \frac{1}{2\gamma} \left( \frac{\|x_k - x^*_F\|^2}{\prod_{t=0}^{k-1}(1- \gamma \eta_t \mu_H )}  -  \frac{\|x_{k+1} - x^*_F\|^2}{\prod_{t=0}^{k}(1- \gamma \eta_t \mu_H )}\right).
\end{align*}
Summing both sides of the preceding relation over $k=1,\ldots, K-1$, and then summing the resulting relation with \eqref{eqn:prop_unif_2}, we obtain 
\begin{align*}
H(x^*_F)\fyy{^\top}\left(\textstyle\sum_{k=0}^{K-1}\eta_k \theta_k(y_{k+1}-x^*_F)\right) &\leq  \frac{1}{2\gamma} \left(  \|x_0 - x^*_F\|^2  -  \theta_{K-1}\|x_{K} - x^*_F\|^2 \right).
\end{align*}
By dropping the nonpositive term and invoking Lemma~\ref{lem:ave_REG}, we obtain  
\begin{align*}
 & H(x^*_F)\fyy{^\top}(\bar y_{K} - x^*_F  ) \leq    \gamma^{-1} D_X^2/\left(\textstyle\sum_{j=0}^{K-1} \eta_j \theta_j\right),
\end{align*}
where we used the definition of $D_X$. Taking the supremum on both sides with respect to $x^*_F \in\mbox{SOL}( X, F)$ and recalling the definition of dual gap function, we obtain (i).

\noindent (ii) Consider \eqref{ineq:EG_lemma_smonotone}. For any $x \in X$, for $k\geq 0$, we have
\begin{align*}
2{\gamma}\left(F(x)+\eta_k H(x)\right)\fyy{^\top}\left(y_{k+1}-x\right)   & \leq (1-{\gamma}\eta_k \mu_H)\|x_k - x\|^2   -\|x_{k+1} - x\|^2.
\end{align*}
Using the Cauchy-Schwarz inequality, and invoking Definition~\ref{def:terms_bounded}, we obtain
\begin{align}\label{eqn:prop_unif_3}
 F(x) \fyy{^\top}\left(y_{k+1}-x\right)   
 & \leq   \frac{1}{2\gamma}\left((1-\gamma\eta_k\mu_H)\|x_k - x\|^2-  \|x_{k+1} -  \far{x}\|^2\right) +  \eta_k C_H \sqrt{2} D_X.
\end{align}
Consider \eqref{eqn:prop_unif_3} for $k=0$. Multiplying the both sides by $\eta_0\theta_0=\far{\frac{\eta_0}{1- \gamma \eta_0 \mu_H }}$, we obtain
 \begin{align}\label{eqn:prop_unif_4}
F(x) \fyy{^\top}\left( \eta_0\theta_0y_{1}- \eta_0\theta_0x\right)   
 & \leq  \frac{1}{2{\gamma}} \left(\eta_0\|x_0 - x\|^2   -\eta_0\theta_0\|x_{1} - x\|^2\right)+\eta_0^2\theta_0  C_H \sqrt{2} D_X\notag\\
 & \leq  \frac{1}{2{\gamma}} \left(\eta_0\|x_0 - x\|^2   -\eta_1\theta_0\|x_{1} - x\|^2\right)+\eta_0^2\theta_0  C_H \sqrt{2} D_X.
\end{align}
Multiplying both sides of \eqref{eqn:prop_unif_3} by $\eta_k\theta_k$ and using $\eta_k \geq \eta_{k+1}$, for any $k\geq 1$ we obtain 
\begin{align}\label{eqn:prop_unif_5}
 F(x) \fyy{^\top}\left(\eta_k\theta_k(y_{k+1}-x)\right)  
 &\leq 
  \frac{1}{2\gamma}\left(\eta_k\theta_{k-1}\|x_k - x\|^2-  \eta_{k+1}\theta_k\|x_{k+1} -  \far{x}\|^2\right) \notag\\
  &+  \eta_k^2\theta_k C_H \sqrt{2} D_X.
\end{align}
Summing both sides of the preceding relation over $k=1,\ldots, K-1$, and then summing the resulting relation with \eqref{eqn:prop_unif_4}, we obtain  
\begin{align*}
    F(x)\fyy{^\top}\textstyle\sum_{k=0}^{K-1}\eta_k\theta_k\left(y_{k+1} - x\right) &\leq \frac{1}{2\gamma} \left(  \eta_0\|x_0 - x  \|^2  - \eta_K \theta_{K-1}\|x_{K} - x \|^2 \right)\\
    &+ C_H \sqrt{2} D_X \textstyle\sum_{k=0}^{K-1} \eta_k^2\theta_k. 
\end{align*}
By dropping the nonpositive term and invoking Lemma~\ref{lem:ave_REG}, we obtain  
\begin{align*}
 & F(x )\fyy{^\top}(\bar y_{K} - x  ) \leq      \left({\gamma }^{-1}\eta_0 D_X^2  +  C_H \sqrt{2} D_X\textstyle\sum_{j=0}^{K-1} \eta_j^2\theta_j\right)/\left(\textstyle\sum_{j=0}^{K-1} \eta_j\theta_j\right),
\end{align*}
where we used the definition of $D_X$. Taking the supremum on both sides with respect to $x  \in X$, we obtain the bound in part (ii).


\noindent (iii) The proof can be done in a similar fashion to that of Theorem~\ref{Thm:asymptotic-m.m} (iii) and in view of the uniqueness of the solution of the bilevel VI. Hence, it is omitted.  
\end{proof}
}
We now present the error bounds for \far{IR-EG$_{{\texttt{s,m}}}$}. \far{The proof of this result is presented in section~\ref{sec_app_theorem: H is SC}.} 

 \begin{tcolorbox}[colback=blue!5!white,colframe=blue!55!black]
\begin{theorem}[\far{IR-EG$_{{\texttt{s,m}}}$}'s rate statements for bilevel VIs]\label{theorem: H is SC}\em 
\far{Consider problem~\eqref{eqn:bilevelVI}. Let $\bar{y}_K$ be generated by Algorithm~\ref{alg:IR-EG-s}, let Assumption~\ref{assum:bilevelVI_sm} hold, and let the set $X$ be bounded. Suppose $\gamma \leq \frac{1}{2L_F}$.

\medskip

{\bf [Case 1. Diminishing regularization]}
Suppose  $\eta_{k}=\frac{\eta_{0,u}}{k+\eta_{0,l}}$ for $k\geq 0$, where $\eta_{0,u}=(\gamma \mu_H)^{-1}$ and $\eta_{0,l}=5L_H\mu_H^{-1}$.  \ssrtwo{Then, $\{ \bar y_k\}$ converges to the unique solution of problem~\cref{eqn:bilevelVI}. Further} for all $K\geq 1$, 

\noindent {\bf (1-i)} $  \ssrtwo{- B_H\, \mbox{dist}\left(\bar{y}_K,\mbox{SOL}(X,F)\right) \leq } \mbox{Gap}\left(\bar{y}_K,\mbox{SOL}(X,F),H\right)      \leq      D_X^2(5L_H-\mu_H)  \frac{1}{K}$. 
 
 \noindent {\bf (1-ii)} The inner-level VI's gap function is bounded as 
 
 $ 0 \leq \mbox{Gap}\left(\bar{y}_K,X,F\right)       \leq \left(D_X^2 +\frac{\sqrt{2}C_HD_X}{\mu_H}\left(\frac{\mu_H}{5L_H}+\ln\left(\frac{K+5L_H\mu_H^{-1}-1}{5L_H\mu_H^{-1}}\right)\right)\right)\frac{\gamma^{-1}}{K}$. 
 
\noindent {\bf (1-iii)}  Further, if $\mbox{SOL}(X,F)$ is $\alpha$-weakly sharp  {of order $\mathcal{M}\geq 1$}, then  $\mbox{Gap}\left(\bar{y}_K,\mbox{SOL}(X,F),H\right)$ is bounded below by 

\noindent $-B_H \sqrt[\mathcal{M}]{ \alpha^{-1}\left(D_X^2 +\frac{\sqrt{2}C_HD_X}{\mu_H}\left(\frac{\mu_H}{5L_H}+\ln\left(\frac{K+5L_H\mu_H^{-1}-1}{5L_H\mu_H^{-1}}\right)\right)\right)\frac{\gamma^{-1}}{K}}.$ 

\medskip

{\bf [Case 2. Constant regularization]} Suppose $\eta := \frac{(p+1)\ln(K)}{ \gamma\mu_H K}$ for some arbitrary $p \geq 1$. Then, the following results hold for all integers $K$ such that $\frac{K}{\ln(K)} \geq 5(p+1)\frac{L_H}{\mu_H}$.

\noindent {\bf (2-i)} $  \ssrtwo{- B_H\, \mbox{dist}\left(\bar{y}_K,\mbox{SOL}(X,F)\right) \leq } \mbox{Gap}\left(\bar{y}_K,\mbox{SOL}(X,F),H\right)      \leq   \left(\frac{\mu_H D_X^2}{ p+1 }\right)\frac{1}{\ln(K)K^p}  $. 
 
 \noindent {\bf (2-ii)} $ 0 \leq \mbox{Gap}\left(\bar{y}_K,X,F\right)       \leq  \left(\frac{D_X^2}{\gamma}\right)\frac{1}{ K^{p+1}}+\left(\frac{(p+1)\sqrt{2} C_H  D_X}{ \gamma\mu_H } \right)\frac{\ln(K)}{K} $. 
 
 \noindent {\bf (2-iii)}  Further, if $\mbox{SOL}(X,F)$ is $\alpha$-weakly sharp {of order $\mathcal{M}\geq 1$}, then the error function  $\mbox{Gap}\left(\bar{y}_K,\mbox{SOL}(X,F),H\right)$ is bounded below by  
 
\noindent  $-B_H \sqrt[{\mathcal{M}}]{ \alpha^{-1} \left(\left( \frac{D_X^2}{\gamma} \right) \frac{1}{K^{p+1}}+ \left(\frac{\sqrt{2} C_H D_X (p+1)}{\gamma \mu_H}\right) \frac{\ln(K)}{K}\right) }.$

 \medskip
 
 {\bf [Case 3. Constant regularization with a priori known threshold \fyy{if} $\mathcal{M} =1$]}
 \noindent Let $\mbox{SOL}(X,F)$ be $\alpha$-weakly sharp of order $\mathcal{M}=1$. Suppose $\eta \leq \frac{\alpha}{2\|H(x^*)\|}$ and {${\gamma}^2L_F^2+{\gamma}\far{\eta}\mu_H+{\gamma}^2\far{\eta}^2L_H^2\leq 0.5$}, where $x^*$ is the unique solution to problem~\eqref{eqn:bilevelVI}.  Then, the following results hold for all $K\geq 1$. 
 
 \noindent {\bf (3-i)} $    \mbox{dist}(\bar{y}_K,X^*_F) \leq  \tfrac{  \|x_0 - x^*\|^2}{{\gamma}\alpha}(1-\gamma \eta \mu_H)^K.$
 
 \noindent {\bf (3-ii)}  $| \mbox{Gap}\left(\bar{y}_K, X^*_F,H\right)|   \leq \max\left\{ \tfrac{D_X^2}{ \gamma \eta},  \tfrac{B_H \|x_0 - x^*\|^2}{{\gamma}\alpha} \right\}(1-\gamma \eta \mu_H)^K.$
 }
\end{theorem}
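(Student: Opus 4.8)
The plan is to treat Cases~1 and~2 as direct instantiations of the general error bounds in Theorem~\ref{prop:unify_IR-EG_sm} with the prescribed $\eta_k$, while Case~3 requires a separate contraction argument built on order-one weak sharpness. For Cases~1 and~2 the whole task reduces to evaluating the weighted sums $S_1\triangleq\sum_{j=0}^{K-1}\eta_j\theta_j$ and $S_2\triangleq\sum_{j=0}^{K-1}\eta_j^2\theta_j$ appearing in Theorem~\ref{prop:unify_IR-EG_sm}~(i)--(ii), where $\theta_k=\prod_{t=0}^{k}(1-\gamma\eta_t\mu_H)^{-1}$. Before that I would check the algorithmic stepsize requirement ${\gamma}^2L_F^2+{\gamma}\eta_k\mu_H+{\gamma}^2\eta_k^2L_H^2\le 0.5$: under $\gamma\le\tfrac{1}{2L_F}$ and $L_H\ge\mu_H$ it holds because $\gamma\eta_k\mu_H\le\tfrac15$ and $(\gamma\eta_kL_H)^2\le\tfrac1{25}$ for both choices, the sum being at most $\tfrac14+\tfrac15+\tfrac1{25}<\tfrac12$; in Case~2 this is exactly where the hypothesis $K/\ln K\ge 5(p+1)L_H/\mu_H$ enters, forcing $q\triangleq\gamma\eta\mu_H=\tfrac{(p+1)\ln K}{K}\le\tfrac15<1$.

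For Case~1 with $\eta_k=\eta_{0,u}/(k+\eta_{0,l})$ one has $\gamma\eta_k\mu_H=1/(k+\eta_{0,l})$, so the product defining $\theta_k$ telescopes to $\theta_k=(k+\eta_{0,l})/(\eta_{0,l}-1)$. Hence $\eta_k\theta_k=\eta_{0,u}/(\eta_{0,l}-1)=[\gamma(5L_H-\mu_H)]^{-1}$ is \emph{constant}, giving $S_1=K[\gamma(5L_H-\mu_H)]^{-1}$ and, via the integral estimate $\sum_{j=0}^{K-1}(j+\eta_{0,l})^{-1}\le\tfrac{\mu_H}{5L_H}+\ln\!\big(\tfrac{K+5L_H\mu_H^{-1}-1}{5L_H\mu_H^{-1}}\big)$, a closed form for $S_2$. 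Substituting $S_1,S_2$ and $\eta_0=(5\gamma L_H)^{-1}$ into Theorem~\ref{prop:unify_IR-EG_sm}~(i)--(ii) yields (1-i) exactly and (1-ii) after loosening its first term by a harmless factor $(5L_H-\mu_H)/(5L_H)<1$. Case~2 is analogous with constant $\eta$ and geometric $\theta_k=(1-q)^{-(k+1)}$, so $S_1=[(1-q)^{-K}-1]/(\gamma\mu_H)$ and $S_2/S_1=\eta$. The one genuine estimate is $(1-q)^{-K}-1\ge(p+1)\ln(K)K^{p}$, which follows from $\ln(1-q)\le-q$ (giving $(1-q)^{-K}\ge e^{Kq}=K^{p+1}$) and $(1-q)K^{p+1}\ge1$, since $qK^{p+1}=(p+1)\ln(K)K^{p}$; this converts Theorem~\ref{prop:unify_IR-EG_sm}~(i)--(ii) into (2-i)--(2-ii). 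In both cases (1-iii)/(2-iii) follow by chaining the lower bound $\mbox{Gap}(\bar y_K,\mbox{SOL}(X,F),H)\ge-B_H\,\mbox{dist}(\bar y_K,\mbox{SOL}(X,F))$ of Lemma~\ref{lem:bilevelVI_gap_ws}~(i) with $\mbox{dist}(\bar y_K,\mbox{SOL}(X,F))\le\sqrt[\mathcal M]{\alpha^{-1}\mbox{Gap}(\bar y_K,X,F)}$ from Lemma~\ref{lem:bilevelVI_gap_ws}~(iii), then inserting the just-proved bound on $\mbox{Gap}(\bar y_K,X,F)$.

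Case~3 is where the real work lies, and it bypasses Theorem~\ref{prop:unify_IR-EG_sm}~(ii). I would set $x:=x^*$ (the unique bilevel solution) in \eqref{ineq:EG_lemma_smonotone} and establish $(F(x^*)+\eta H(x^*))^\top(y_{k+1}-x^*)\ge\tfrac{\alpha}{2}\mbox{dist}(y_{k+1},X^*_F)\ge0$. The obstacle is that both terms must be controlled by the \emph{same} quantity $\mbox{dist}(y_{k+1},X^*_F)$: order-one weak sharpness gives $F(x^*)^\top(y_{k+1}-x^*)\ge\alpha\,\mbox{dist}(y_{k+1},X^*_F)$ directly, but the naive estimate $H(x^*)^\top(y_{k+1}-x^*)\ge-\|H(x^*)\|\,\|y_{k+1}-x^*\|$ bounds the wrong norm, and $\mbox{dist}(y_{k+1},X^*_F)\le\|y_{k+1}-x^*\|$ points the unfavorable way. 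The fix is to insert the projection $\hat y_{k+1}\triangleq\Pi_{X^*_F}[y_{k+1}]$ and use the outer-level optimality $H(x^*)^\top(\hat y_{k+1}-x^*)\ge0$ to get $H(x^*)^\top(y_{k+1}-x^*)\ge-\|H(x^*)\|\,\mbox{dist}(y_{k+1},X^*_F)$; the threshold $\eta\le\tfrac{\alpha}{2\|H(x^*)\|}$ then leaves the residual $\tfrac{\alpha}{2}\mbox{dist}(y_{k+1},X^*_F)$. This immediately gives the contraction $\|x_{k+1}-x^*\|^2\le(1-q)\|x_k-x^*\|^2$, hence $\|x_K-x^*\|^2\le(1-q)^K\|x_0-x^*\|^2$, and the per-step residual bound $\gamma\alpha\,\mbox{dist}(y_{k+1},X^*_F)\le(1-q)\|x_k-x^*\|^2-\|x_{k+1}-x^*\|^2$.

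To finish Case~3 I would invoke $\bar y_K=\sum_{k=0}^{K-1}\lambda_{k,K}y_{k+1}$ (Lemma~\ref{lem:ave_REG}) and convexity of $\mbox{dist}(\cdot,X^*_F)$ (Jensen) to write $\mbox{dist}(\bar y_K,X^*_F)\le\sum_k\lambda_{k,K}\mbox{dist}(y_{k+1},X^*_F)$. Setting $a_k\triangleq\|x_k-x^*\|^2$ and using $(1-q)\theta_k=\theta_{k-1}$, the weighted residual sum $\sum_k\theta_k[(1-q)a_k-a_{k+1}]$ telescopes to $a_0-\theta_{K-1}a_K\le\|x_0-x^*\|^2$; dividing by $\sum_j\theta_j=[(1-q)^{-K}-1]/q$ and using the Bernoulli bound $q\le 1-(1-q)^K$ to replace $q/((1-q)^{-K}-1)$ by $(1-q)^K$ yields (3-i). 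For (3-ii) the lower bound comes from Lemma~\ref{lem:bilevelVI_gap_ws}~(i) combined with (3-i), the upper bound from Theorem~\ref{prop:unify_IR-EG_sm}~(i) with $S_1=[(1-q)^{-K}-1]/(\gamma\mu_H)$, again converting to $(1-q)^K$ via $q\le 1-(1-q)^K$; taking the larger constant gives the stated maximum. The main conceptual hurdle is the projection-plus-threshold step in Case~3; everything else is bookkeeping of telescoping and geometric sums and the Bernoulli-type inequalities needed to present the bounds as clean $(1-q)^K$, $K^{-(p+1)}$, and $1/K$ rates.
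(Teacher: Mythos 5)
Your proposal is correct and follows essentially the same route as the paper's proof: verifying the stepsize condition, telescoping $\theta_k$ and the weighted sums $\sum_j\eta_j\theta_j$, $\sum_j\eta_j^2\theta_j$ into Theorem~\ref{prop:unify_IR-EG_sm} for Cases 1--2, and the projection-plus-threshold argument with weak sharpness and Jensen's inequality for Case 3 (where the paper simply lower-bounds $\sum_j\theta_j\ge\theta_{K-1}=(1-\gamma\eta\mu_H)^{-K}$ while you use the full geometric sum together with a Bernoulli-type inequality, which is equivalent). The only item you do not address is the asymptotic convergence claim in Case 1, but that follows immediately from your computed sums together with Theorem~\ref{prop:unify_IR-EG_sm}~(iii).
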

   \end{tcolorbox}


We now provide the main results for addressing VI-constrained strongly optimization. \far{Due to some distinctions with the results in Theorem~\ref{theorem: H is SC}, we provide the detailed results as follows. These results will be utilized in section~\ref{sec:ncvx}. The proof of this result is presented in section~\ref{sec:proof_lemma_theorem: H is SC2}.}
 \begin{tcolorbox}[colback=blue!5!white,colframe=blue!55!black]
\begin{corollary}[IR-EG$_{{\texttt{s,m}}}$'s rate statements for VI-constrained strongly convex optimization]\label{theorem: H is SC2}\em 
\far{Consider problem \eqref{eqn:optVI}. Let Assumption~\ref{assum:bilevelVI_sm} hold, where $H$ is the gradient map of a $\mu$-strongly convex and $L$-smooth function $f$. Let $X$ be bounded and $x^*$ denote the unique optimal solution to \eqref{eqn:optVI}. Let $\bar{y}_K$ be generated by Algorithm~\ref{alg:IR-EG-s} with $\mu_H:=0.5\mu$ and $L_H:=L$.  Suppose $\gamma \leq \frac{1}{2L_F}$. 

 {\bf [Case 1. Diminishing regularization]}
Suppose  $\eta_{k}=\frac{\eta_{0,u}}{k+\eta_{0,l}}$ for $k\geq 0$, where $\eta_{0,u}=(0.5\gamma \mu)^{-1}$ and $\eta_{0,l}=10L\mu^{-1}$. Then, for all $K\geq 1$, 

\noindent {\bf (1-i)} $f(\bar{y}_K) -f(x^*)      \leq      \left(\frac{(5L-0.5\mu)\|x_0-x^*\|^2}{2}\right)  \frac{1}{K}$. 
 
 \noindent {\bf (1-ii)} The inner-level VI's gap function is bounded as 
 
 $ 0 \leq \mbox{Gap}\left(\bar{y}_K,X,F\right)       \leq \left(D_X^2 +\frac{2\sqrt{2}C_HD_X}{\mu}\left(\frac{\mu}{10L}+\ln\left(\frac{K+10L\mu^{-1}-1}{10L\mu^{-1}}\right)\right)\right)\frac{\gamma^{-1}}{K}$. 
 
\noindent {\bf (1-iii)}  Further, if $\mbox{SOL}(X,F)$ is $\alpha$-weakly sharp  {of order $\mathcal{M}\geq 1$}, then   
\begin{align*}
\tfrac{\mu}{2}\|\bar{y}_K-x^*\|^2 &\leq  \left(\tfrac{(5L-0.5\mu)\|x_0-x^*\|^2}{2}\right)  \tfrac{1}{K} \\ &+ \|\nabla f(x^*)\|\,\sqrt[\mathcal{M}]{\tfrac{\left(D_X^2 +\tfrac{2\sqrt{2}C_HD_X}{\mu}\left(\tfrac{\mu}{10L}+\ln\left(\tfrac{K+10L\mu^{-1}-1}{10L\mu^{-1}}\right)\right)\right)(\alpha\gamma)^{-1}}{K}}. 
\end{align*}

\medskip

{\bf [Case 2. Constant regularization]} Suppose $\eta := \frac{2(p+1)\ln(K)}{ \gamma\mu K}$ for some arbitrary $p \geq 1$. Then, the following results hold for all integers $K$ such that $\frac{K}{\ln(K)} \geq 10(p+1)\frac{L}{\mu}$.

\noindent {\bf (2-i)} $f(\bar{y}_K) -f(x^*)      \leq   \left(\frac{\mu \|x_0-x^*\|^2}{4( p+1) }\right)\frac{1}{\ln(K)K^p}  $. 
 
 \noindent {\bf (2-ii)} $ 0 \leq \mbox{Gap}\left(\bar{y}_K,X,F\right)       \leq  \left(\frac{D_X^2}{\gamma}\right)\frac{1}{ K^{p+1}}+\left(\frac{2(p+1)\sqrt{2} C_H  D_X}{ \gamma\mu} \right)\frac{\ln(K)}{K} $. 
 
 \noindent {\bf (2-iii)}  Further, if $\mbox{SOL}(X,F)$ is $\alpha$-weakly sharp {of order $\mathcal{M}\geq 1$}, then  
 \begin{align*}
\tfrac{\mu}{2}\|\bar{y}_K-x^*\|^2 &\leq  \left(\tfrac{\mu \|x_0-x^*\|^2}{4( p+1) }\right)\tfrac{1}{\ln(K)K^p} \\ &+ \|\nabla f(x^*)\|\,\sqrt[\mathcal{M}]{\left(\tfrac{D_X^2}{\alpha\gamma}\right)\tfrac{1}{ K^{p+1}}+\left(\tfrac{2(p+1)\sqrt{2} C_H  D_X}{ \alpha\gamma\mu} \right)\tfrac{\ln(K)}{K}}. 
\end{align*}

 \medskip
 
 {\bf [Case 3. Constant regularization with a priori known threshold \fyy{if} $\mathcal{M} =1$]}
 \noindent Let $\mbox{SOL}(X,F)$ be $\alpha$-weakly sharp of order $\mathcal{M}=1$. Suppose $\eta \leq \frac{\alpha}{2\|\nabla f(x^*)\|}$ and {${\gamma}^2L_F^2+0.5{\gamma}\far{\eta}\mu+{\gamma}^2\far{\eta}^2L^2\leq 0.5$}, where $x^*$ is the unique solution to problem~\eqref{eqn:bilevelVI}. Then, the following results hold for all $K\geq 1$. 
 
\noindent {\bf (3-i)} $    \mbox{dist}(\bar{y}_K,X^*_F) \leq  \tfrac{  \|x_0 - x^*\|^2}{{\gamma}\alpha}(1-0.5\gamma \eta \mu )^K.$
 

   \noindent \bf{(3-ii)} $\|\bar{y}_K-x^*\|^2 \leq \frac{2 \|x_0-x^*\|^2}{\mu \gamma \eta}(1- 0.5\gamma \eta \mu)^K.$
 
 }
\end{corollary}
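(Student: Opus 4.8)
The plan is to run the whole argument in parallel with the proofs of Theorem~\ref{prop:unify_IR-EG_sm} and Theorem~\ref{theorem: H is SC}, with one decisive substitution: wherever those proofs invoke the strongly-monotone recursion of Lemma~\ref{lem:s_monotone_lemma_ineq}(i), I would instead invoke the gradient-map recursion \eqref{ineq:EG_lemma_smonotone2} from Lemma~\ref{lem:s_monotone_lemma_ineq}(ii). The reason the corollary sets $\mu_H:=\tfrac12\mu$ and $L_H:=L$ in Algorithm~\ref{alg:IR-EG-s} is precisely that (ii) carries the contraction factor $(1-\tfrac{\gamma\eta_k\mu}{2})$ rather than $(1-\gamma\eta_k\mu_H)$; with $\mu_H=\tfrac12\mu$ the geometric weights $\theta_k=\prod_{t=0}^k(1-\gamma\eta_t\mu/2)^{-1}$ telescope exactly against it, so that $\theta_k(1-\tfrac{\gamma\eta_k\mu}{2})=\theta_{k-1}$. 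I would first note that $\gamma\le\tfrac1{2L_F}$ together with the prescribed $\eta_k$ guarantees ${\gamma}^2L_F^2+0.5{\gamma}\eta_k\mu+{\gamma}^2\eta_k^2L^2\le0.5$, so Lemma~\ref{lem:s_monotone_lemma_ineq}(ii) is in force.

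For the suboptimality statements (1-i) and (2-i), I would set $x:=x^*$ in \eqref{ineq:EG_lemma_smonotone2}, discard $2\gamma F(x^*)^\top(y_{k+1}-x^*)\ge0$ (valid since $x^*\in\mbox{SOL}(X,F)$ and $y_{k+1}\in X$), multiply by $\theta_k$, and telescope to get $2\gamma\sum_k\eta_k\theta_k\,(f(y_{k+1})-f(x^*))\le\|x_0-x^*\|^2$. Convexity of $f$ and the weighted-average identity of Lemma~\ref{lem:ave_REG} then yield, by Jensen, $f(\bar y_K)-f(x^*)\le\|x_0-x^*\|^2/\big(2\gamma\sum_j\eta_j\theta_j\big)$. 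It remains to evaluate $\sum_j\eta_j\theta_j$: for the diminishing choice the product $\prod_t(1-\gamma\eta_t\mu/2)$ telescopes so that $\eta_k\theta_k$ is \emph{constant} in $k$, giving $\sum_{j=0}^{K-1}\eta_j\theta_j=2K/(\gamma(10L-\mu))$ and hence (1-i); for the constant choice the same sum equals the geometric series $\tfrac{2}{\gamma\mu}(q^K-1)$ with $q=(1-0.5\gamma\eta\mu)^{-1}$, and the hypothesis $\tfrac{K}{\ln K}\ge10(p+1)\tfrac L\mu$ lets me lower-bound $q^K-1$ by a quantity of order $\ln(K)K^p$ (via $-K\ln(1-\tfrac{(p+1)\ln K}{K})\ge(p+1)\ln K$), giving (2-i). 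The inner-level bounds (1-ii), (2-ii) are instances of Theorem~\ref{prop:unify_IR-EG_sm}(ii) after the substitutions, since that estimate involves only $F$ and $C_H$; one just re-evaluates the same $\sum\eta_j\theta_j$ and $\sum\eta_j^2\theta_j$. The weak-sharp statements (1-iii), (2-iii) follow by feeding these bounds into the strongly-convex inequality of Lemma~\ref{lem:bilevelVI_gap_ws}(ii), $\tfrac\mu2\|\bar y_K-x^*\|^2\le f(\bar y_K)-f(x^*)+\|\nabla f(x^*)\|\,\mbox{dist}(\bar y_K,\mbox{SOL}(X,F))$, and controlling the distance through Lemma~\ref{lem:bilevelVI_gap_ws}(iii).

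The interesting case is the linear rate (3-i)--(3-ii), which needs a \emph{per-iterate} contraction rather than an averaged bound. Again starting from \eqref{ineq:EG_lemma_smonotone2} at $x=x^*$, I would now retain the $F$-term and lower-bound it by weak sharpness of order $\mathcal M=1$, $F(x^*)^\top(y_{k+1}-x^*)\ge\alpha\,\mbox{dist}(y_{k+1},X^*_F)$, while lower-bounding the function gap by Lemma~\ref{lem:bilevelVI_gap_ws}(ii), $f(y_{k+1})-f(x^*)\ge-\|\nabla f(x^*)\|\,\mbox{dist}(y_{k+1},X^*_F)$. The threshold $\eta\le\alpha/(2\|\nabla f(x^*)\|)$ is exactly what makes the combined left-hand coefficient $\alpha-\eta\|\nabla f(x^*)\|\ge\alpha/2$ nonnegative, delivering $\gamma\alpha\,\mbox{dist}(y_{k+1},X^*_F)+\|x_{k+1}-x^*\|^2\le(1-0.5\gamma\eta\mu)\|x_k-x^*\|^2$. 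Multiplying by $\theta_k$, telescoping, dropping $\|x_K-x^*\|^2$, and applying Jensen to the convex map $\mbox{dist}(\cdot,X^*_F)$ then yields $\mbox{dist}(\bar y_K,X^*_F)\le\tfrac{0.5\eta\mu}{\alpha}\|x_0-x^*\|^2/(q^K-1)$, which I convert to (3-i). For (3-ii) I would in the same telescope instead retain the function-gap term (dropping the nonnegative distance term) to obtain $f(\bar y_K)-f(x^*)\le\tfrac{\|x_0-x^*\|^2}{2\gamma\eta}(1-0.5\gamma\eta\mu)^K$, then substitute this together with (3-i) into the strongly-convex inequality, using the threshold once more to merge the two $(1-0.5\gamma\eta\mu)^K$ contributions into the factor $\tfrac{\|x_0-x^*\|^2}{\gamma\eta}$.

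I expect the main obstacle to be the algebraic conversion of the geometric-sum factor $1/(q^K-1)$ into the clean rate $(1-0.5\gamma\eta\mu)^K$: this hinges on the elementary but easily-mishandled inequality $0.5\gamma\eta\mu\le1-q^{-K}$ (true for $K\ge1$ since $1-q^{-K}\ge1-q^{-1}=0.5\gamma\eta\mu$), together with $\sum_{j=0}^{K-1}\theta_j=(q^K-1)/(0.5\gamma\eta\mu)$, and on verifying in each case that the prescribed $\eta$ meets the Lemma~\ref{lem:s_monotone_lemma_ineq}(ii) smallness condition. The only conceptual subtlety is the bookkeeping of the shifted weights $\theta_{k-1}$ versus $\eta_k\theta_k$ in the telescope, and the careful tracking of where $\|x_0-x^*\|^2$ (rather than $D_X^2$) survives because we specialize $x=x^*$ instead of taking a supremum over the solution set.
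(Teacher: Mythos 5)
Your proposal is correct and follows essentially the same route as the paper: replace the strongly-monotone recursion \eqref{ineq:EG_lemma_smonotone} by the gradient-map recursion \eqref{ineq:EG_lemma_smonotone2}, substitute $\mu_H:=0.5\mu$, $L_H:=L$ so the weights $\theta_k$ telescope, specialize $x:=x^*$ for the suboptimality bounds, reuse Theorem~\ref{prop:unify_IR-EG_sm}(ii) for the inner-level gap, and combine Lemma~\ref{lem:bilevelVI_gap_ws}(ii)--(iii) with the threshold $\eta\le\alpha/(2\|\nabla f(x^*)\|)$ for the weak-sharp and linear-rate cases. The only deviation is cosmetic: for Case 2 you evaluate $\sum_j\eta_j\theta_j$ as a closed-form geometric series, whereas the paper simply lower-bounds $\sum_j\theta_j\ge\theta_{K-1}=(1-0.5\gamma\eta\mu)^{-K}$; both yield the stated rates.
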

   \end{tcolorbox}
\ssrtwo{\begin{remark}
Note that to define $\eta_{0,l}$ in \cref{theorem: H is SC}, it is not necessary to know the exact value of $L_H$, which may not be available. If the mapping $H$ is $L_H$-smooth, then it is also $\ell L_H$-smooth for any  $\ell \geq 1$. Therefore, in Case 1 of \cref{theorem: H is SC}, we can set $\eta_{0,l} = 5 \ell L_H \mu_H^{-1}$, and the results will still hold for $\ell L_H$. This implies that choosing $\eta_{0,l} \geq 5 L_H \mu_H^{-1}$ is sufficient to guarantee the results. Similarly, by the same reasoning, we conclude that \( \eta_{0,l} \geq 10 L \mu^{-1} \) is suitable in Case 1 of \cref{theorem: H is SC2}.
\end{remark}
}
 
\far{

\begin{remark}By Theorem~\ref{theorem: H is SC}, under the weak sharpness assumption with $\mathcal{M}=1$ and availability of the threshold $\tfrac{\alpha}{\|H(x^*)\|}$,  a linear convergence rate is guaranteed for bilevel VIs in  terms of  $| \mbox{Gap}\left(\bar{y}_K, X^*_F,H\right)| $.  Similar \far{guarantees are provided in Corollary~\ref{theorem: H is SC2} for VI-constrained  strongly convex optimization in terms of $\|\bar{y}_K-x^*\|^2$. }
\end{remark}}

\section{VI-constrained nonconvex optimization}\label{sec:ncvx}
Our goal in this section is to derive rate statements for Algorithm~\ref{alg:ncvx-IR-GD} in computing a stationary point to the VI-constrained nonconvex optimization problem~\eqref{eqn:optVI}.
\begin{assumption}\label{assum:bilevelVI_nc} \em
Consider problem~\eqref{eqn:optVI}. Let the following statements hold. 

\noindent (a) Set $X$ is nonempty, compact, and convex.

\noindent (b) Mapping $F:X \to \mathbb{R}^n$ is $L_F$-Lipschitz continuous and \far{monotone} on $X$.

\noindent  (c) Function $f:X \to \mathbb{R}$ is real-valued, possibly nonconvex, and $L$-smooth on $X$.

\noindent (d) The set $\mbox{SOL}(X,F)$ is $\alpha$-weakly sharp \ses{of order $\mathcal{M} \geq 1$}. 
\end{assumption}
\subsection{Algorithm outline}\label{sec:IPREG_desc}
 We propose the IPR-EG method, presented by Algorithm~\ref{alg:ncvx-IR-GD}, for solving~the VI-constrained optimization problem~\eqref{eqn:optVI} when $f$ is $L$-smooth and nonconvex. This is an inexactly-projected gradient method that employs IR-EG$_{{\texttt{s,m}}}$, at each iteration, with prescribed adaptive termination criterion and regularization parameters. \far{To elaborate on some key ideas employed in this method, consider} problem~\eqref{eqn:optVI}, rewritten as $\min_{x \in X^*_F} \ f(x)$ where $X^*_F\triangleq \mbox{SOL}(X,F)$. First, we may naively consider the standard projected gradient method as $\hat{x}_{k+1}:=\Pi_{X_F^*}\left[\hat{x}_k -\hat{\gamma}\nabla f(\hat{x}_k)\right]$. However, the set $X^*_F$ is unknown. Interestingly, at any iteration $k$, the \far{IR-EG$_{{\texttt{s,m}}}$} method can be employed to inexactly compute $\Pi_{X^*_F}[z_k]$ given  $z_k:=\hat{x}_k -\hat{\gamma}\nabla f(\hat{x}_k)$. To this end, let us assume that $k$ is fixed and define $H(x):=x-z_k$. Note that $H$ is strongly monotone and Lipschitz continuous with $\mu_H=L_H=1$. Observing that $H(x) =\nabla_x \left(\tfrac{1}{2}\|x-z_k\|^2\right)$, the unique solution to the bilevel VI problem~\eqref{eqn:bilevelVI} is equal to $\far{\Pi_{X^*_F}[z_k]} = \hbox{arg}\min_{x \in X^*_F} \ \tfrac{1}{2}\|x-z_k\|^2.$
Motivated by this observation, we employ \far{IR-EG$_{{\texttt{s,m}}}$} to compute $\far{\Pi_{X^*_F}[z_k]}$ inexactly. However, it is crucial to control this inexactness for establishing the convergence and deriving rate statements for computing a stationary point to problem~\eqref{eqn:optVI}. Indeed, we may obtain a bound on the inexactness, as it will be shown in Corollary~\ref{theorem: H is SC2}. A key question in the design of the IPR-EG method lies in finding out that at any given iteration $k$ of the underlying gradient  method, how many iterations of \far{IR-EG$_{{\texttt{s,m}}}$} are needed. Notably, because of performing inexact projections, $\hat{x}_k$ may not be feasible to problem~\eqref{eqn:optVI}. This infeasibility needs to be carefully treated in the analysis to establish the convergence to a stationary point of the original VI-constrained problem. 

\begin{algorithm}[H]
  \caption{IPR-EG for VI-constrained nonconvex optimization~\eqref{eqn:optVI}}
  \label{alg:ncvx-IR-GD}
  \begin{algorithmic}[1]
    \STATE \textbf{Input:} Initial vectors \far{$\hat x_{0}, x_{0,0}, \bar{y}_{0,0} \in X$}, outer loop stepsize $\hat{\gamma}:=\frac{1}{\sqrt{K}} \leq {\frac{1}{ 2L  }}$, and inner loop stepsize $ \gamma  \leq \frac{1}{2L_F}$.  \far{If $\mathcal{M}=1$ and $\frac{\alpha L}{2\sqrt{2}D_X L+  C_f}$ is known a priori, choose a constant regularization parameter $\eta_k \equiv \eta \leq    \frac{\alpha L}{2\sqrt{2}D_X L+  C_f}$ such that $0.5{\gamma}\eta+{\gamma}^2\eta^2\leq 0.25$ and let $T_k:=\tau\ln(k+1)$ where $\tau\geq \tfrac{-2}{\ln(1-0.5\eta\gamma)}$; otherwise,  let $\eta_k :=  {6\ln(T_k)} /( \gamma T_k)$ and \ses{$T_k := \max\{k^{1.5 \mathcal{M}},151\}$, for all $k\geq 0$}.}
   \FOR{$k=0,1,\ldots, K-1$}
     \STATE $z_k := \hat x_k - \hat{\gamma} \nabla f(\hat x_k)$
     \STATE  $ \Gamma_{k,0} :=0$, and $\theta_{k,0} := \frac{1}{1- 0.5\gamma \far{\eta_k} }$
 
      \FOR{$t = 0, 1, \ldots, T_k -1$}

	\STATE $y_{k, t+1}  := \Pi_X\left[x_{k,t} - {\gamma}\left(F(x_{k,t})+\eta_k \left( x_{k,t} - z_k\right)\right)\right]  $

	\STATE $x_{k, t+1} := \Pi_X\left[x_{k,t} - {\gamma}\left(F(y_{k, t+1}) +\eta_k \left( y_{k,t+1} - z_k\right)\right)\right] $

	\STATE $\bar{y}_{k, t+1} := \left( {\Gamma_{k,t} \bar{y}_{k,t} + \theta_{k,t} y_{k, t+1}}\right)/{\Gamma_{k, t+1}}$
 
	\STATE $ \Gamma_{k, t+1} := \Gamma_{k, t} + \theta_{k, t} $  and $\theta_{k, t+1} :=\frac{\theta_{k,t}}{1 - 0.5\gamma \eta_k } $
	\ENDFOR
         \STATE $\hat x_{k+1} := \bar{y}_{k,T_k } $, $\far{\bar{y}_{k+1,0}:=\bar{y}_{k,T_k}} $, and $\far{x_{k+1,0}:=\bar{y}_{k,T_k}} $
    \ENDFOR
    \STATE {\bf return} $\hat{x}_K$
  \end{algorithmic}
\end{algorithm}

\subsection{Convergence analysis for IPR-EG}
\far{Let us} define a metric for qualifying stationary points to problem~\eqref{eqn:optVI}.
\begin{definition}[Residual mapping]\label{def:res_map}\em
Let Assumption~\ref{assum:bilevelVI_nc} hold and $0<\hat{\gamma} \leq \frac{1}{L}$. For all $x \in X$, let $G_{1/\hat{\gamma}}:X \to \mathbb{R}^n$ be defined as 
$$G_{1/\hat{\gamma}}(x) \triangleq \tfrac{1}{\hat{\gamma}}\left(x-\Pi_{\footnotesize \mbox{SOL}(X,F)}\left[x-\hat{\gamma}\nabla f(x)\right]\right).$$
\end{definition}
\begin{remark}
Under Assumption~\ref{assum:bilevelVI_nc}, $\mbox{SOL}(X,F)$ is nonempty, compact, and convex. Also, $f^*\triangleq \inf_{x\, \in \, {\footnotesize \mbox{SOL}}(X,F)} f(x)>-\infty$. 
In view of Theorem~10.7 in~\cite{book:AmirB}, $x^* \in \mbox{SOL}(X,F)$ is a stationary point to problem~\eqref{eqn:optVI} if and only if $G_{1/\hat{\gamma}}(x^*)=0$. Utilizing this result, we consider $\|G_{1/\hat{\gamma}}(\bullet)\|^2$ as a well-defined error metric. 
\end{remark}
\begin{definition}\label{def:IPREG_terms}\em Let us define the following terms for $k\geq 0$. 
\begin{align*}
 & \hat{x}_{k+1} \triangleq  \bar y_{k,T_k}, \qquad z_k \triangleq \hat x_k - \hat{\gamma} \nabla f(\hat x_k),\\  
  & \delta _k\triangleq\hat x_{k+1}-\Pi_{X^*_F}[z_k] , \qquad e_k \triangleq \hat x_k - \Pi_{X^*_F}\left[\hat x_k\right], \qquad \hbox{where }X^*_F \triangleq \mbox{SOL}(X,F).
\end{align*}
\end{definition}
\begin{remark}\label{rem:e_k_sol_rel}
As explained earlier in subsection~\ref{sec:IPREG_desc}, at each iteration of IPR-EG, we employ \far{IR-EG$_{{\texttt{s,m}}}$} to compute an inexact but increasingly accurate projection onto the unknown set $X^*_F$. Accordingly, $\delta_k$ measures the inexactness of projection at iteration $k$ of the underlying gradient descent method. Also, $e_k$ measures the infeasibility of the generated iterate $\hat{x}_k$ at iteration $k$ with respect to the unknown constraint set $X^*_F$. Indeed, by definition, $\|e_k\| = \mbox{dist}(\hat{x}_k,\mbox{SOL}(X,F))$ for $k\geq 0$.
\end{remark}
To establish the convergence of the generated iterate by IPR-EG to a stationary point, the term $\|\hat{x}_{k+1}-\hat{x}_k\|^2$ may seem relevant. The following result builds a relation between this term and the residual mapping. \far{The proof is presented in section~\ref{sec:proof_lemma_subsection:error-metric}.}
\begin{lemma}\label{subsection:error-metric}\em
Let $\{{\hat x}_{k}\}$ be generated by Algorithm~\ref{alg:ncvx-IR-GD}. Then, for all $k \geq 0$,
\begin{align}\tfrac{\hat\gamma^2}{2}  \left\| G_{{1}/{\hat\gamma}}(\hat x_k) \right\|^2 \leq    \|\hat  x_{k+1} - \hat x_k\|^2 +  \| \delta_k \|^2.
\end{align}
\end{lemma}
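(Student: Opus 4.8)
The plan is to unwind every quantity back to the single projected point $\Pi_{X_F^*}[z_k]$ and then recognize the claim as an elementary instance of the inequality $\|a+b\|^2 \le 2\|a\|^2 + 2\|b\|^2$. First I would observe that, by Definition~\ref{def:res_map} evaluated at $\hat x_k$ together with $z_k = \hat x_k - \hat\gamma\nabla f(\hat x_k)$ from Definition~\ref{def:IPREG_terms}, the residual mapping satisfies
\[
\hat\gamma\, G_{1/\hat\gamma}(\hat x_k) = \hat x_k - \Pi_{X_F^*}\!\left[\hat x_k - \hat\gamma\nabla f(\hat x_k)\right] = \hat x_k - \Pi_{X_F^*}[z_k].
\]
This is the crucial identification: the argument of the inner projection defining $G_{1/\hat\gamma}(\hat x_k)$ is exactly $z_k$, so the residual mapping and the inexactness term $\delta_k$ share the common anchor point $\Pi_{X_F^*}[z_k]$.

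Next I would introduce $\delta_k = \hat x_{k+1} - \Pi_{X_F^*}[z_k]$ from Definition~\ref{def:IPREG_terms} and perform the telescoping decomposition
\[
\hat\gamma\, G_{1/\hat\gamma}(\hat x_k) = \hat x_k - \Pi_{X_F^*}[z_k] = \bigl(\hat x_k - \hat x_{k+1}\bigr) + \bigl(\hat x_{k+1} - \Pi_{X_F^*}[z_k]\bigr) = \bigl(\hat x_k - \hat x_{k+1}\bigr) + \delta_k.
\]
The point of splitting this way is to isolate the two quantities that appear on the right-hand side of the claim, namely the consecutive-iterate gap $\hat x_{k+1} - \hat x_k$ (whose norm controls progress of the outer gradient step) and the projection inexactness $\delta_k$ (which is itself bounded via the inner-loop guarantees from Corollary~\ref{theorem: H is SC2}).

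To finish, I would take squared norms and apply the Young/parallelogram-type bound $\|a+b\|^2 \le 2\|a\|^2 + 2\|b\|^2$ with $a = \hat x_k - \hat x_{k+1}$ and $b = \delta_k$, obtaining
\[
\hat\gamma^2 \left\|G_{1/\hat\gamma}(\hat x_k)\right\|^2 \le 2\,\|\hat x_{k+1} - \hat x_k\|^2 + 2\,\|\delta_k\|^2,
\]
and then dividing both sides by $2$ yields the stated inequality. I expect no genuine obstacle here: the only subtlety is the bookkeeping in the first step, where one must confirm that the inner projection in $G_{1/\hat\gamma}(\hat x_k)$ is taken at precisely $z_k$ rather than at $\hat x_k$ itself; once that identification is made the result is a one-line application of a standard norm inequality. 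The real work of the analysis is deferred to controlling $\|\delta_k\|$ separately, which this lemma deliberately does not address.
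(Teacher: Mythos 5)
Your proof is correct and follows essentially the same route as the paper: both identify $\hat\gamma\, G_{1/\hat\gamma}(\hat x_k) = \hat x_k - \Pi_{X^*_F}[z_k] = (\hat x_k - \hat x_{k+1}) + \delta_k$ and then apply $\|u+v\|^2 \le 2\|u\|^2 + 2\|v\|^2$. No issues.
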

In the following, we obtain an error bound on $\|G_{1/\hat{\gamma}}(\bullet)\|^2$ characterized by $\delta_k$ and $e_k$. \far{The proof is presented in section~\ref{sec:proof_lemma_prop:upper_bound_gradient}.} 
\begin{proposition}\label{prop:upper_bound_gradient}\em
Consider problem~\eqref{eqn:optVI}. Let Assumption~\ref{assum:bilevelVI_nc} hold and let $\{{\hat x}_{k}\}$ be generated by Algorithm~\ref{alg:ncvx-IR-GD}. Suppose  $\hat{\gamma} \leq \tfrac{1}{2L}$. Then, for all $k\geq 0$ we have 
\begin{align}\label{prop: uper G before sum}
  \| G_{{1}/{\hat\gamma}}(\hat x_k) \|^2  \leq \frac{4}{\hat\gamma} \left(f(\hat x_k) - f(\hat x_{k+1})\right)+  L\hat\gamma C_f^2  +\frac{80}{L\hat{\gamma}^3} ( \| \delta _k\|^2  + \| e_k \|^2).
\end{align}
\end{proposition}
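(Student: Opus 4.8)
The plan is to carry out an inexact projected-gradient descent analysis, carefully tracking the two sources of error: the projection inexactness $\delta_k$ and the infeasibility $e_k$ of the current iterate. First I would introduce the shorthand $w_k := \Pi_{X^*_F}[z_k]$ and $\bar x_k := \Pi_{X^*_F}[\hat x_k]$, and record three identities that follow directly from Definition~\ref{def:res_map} and Definition~\ref{def:IPREG_terms}: writing $G := G_{1/\hat\gamma}(\hat x_k)$, we have $\hat\gamma\, G = \hat x_k - w_k$, $\hat x_{k+1} = w_k + \delta_k$, and $\hat x_k - \bar x_k = e_k$. Combining the first two yields the displacement identity $\hat x_{k+1} - \hat x_k = \delta_k - \hat\gamma\, G$. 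I would also note that each $\hat x_k$ lies in $X$ (it is a convex combination of projections onto $X$, started from $\hat x_0 \in X$), so that by Definition~\ref{def:terms_bounded} and compactness of $X$ we have $\|\nabla f(\hat x_k)\| \le C_f$.

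The two analytical ingredients are the descent lemma and the projection optimality condition. Since $f$ is $L$-smooth on the convex set $X$ and both $\hat x_k, \hat x_{k+1} \in X$, the descent lemma gives $f(\hat x_{k+1}) \le f(\hat x_k) + \nabla f(\hat x_k)^\top(\hat x_{k+1}-\hat x_k) + \tfrac{L}{2}\|\hat x_{k+1}-\hat x_k\|^2$. For the first-order term I would invoke the projection theorem (Lemma~\ref{lem:Projection theorm}) for $w_k = \Pi_{X^*_F}[z_k]$ at the feasible test point $y = \bar x_k \in X^*_F$; substituting $z_k = \hat x_k - \hat\gamma\nabla f(\hat x_k)$ together with the identities above turns $(\bar x_k - w_k)^\top(z_k - w_k)\le 0$ into $(\hat\gamma G - e_k)^\top(G - \nabla f(\hat x_k)) \le 0$, which rearranges to the crucial lower bound $\hat\gamma\,\nabla f(\hat x_k)^\top G \ge \hat\gamma\|G\|^2 - e_k^\top G + e_k^\top\nabla f(\hat x_k)$.

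Next I would substitute the displacement identity into the descent inequality, apply the above lower bound to replace $-\hat\gamma\nabla f(\hat x_k)^\top G$, and bound $\|\hat x_{k+1}-\hat x_k\|^2 = \|\delta_k - \hat\gamma G\|^2 \le 2\|\delta_k\|^2 + 2\hat\gamma^2\|G\|^2$. The coefficient of $\|G\|^2$ then collects as $\hat\gamma(1 - L\hat\gamma)$, and the stepsize bound $\hat\gamma \le \tfrac{1}{2L}$ forces $1 - L\hat\gamma \ge \tfrac12$, leaving at least $\tfrac{\hat\gamma}{2}\|G\|^2$ on the left. It then remains to dispose of the cross terms $\nabla f(\hat x_k)^\top\delta_k$, $e_k^\top G$, and $e_k^\top\nabla f(\hat x_k)$ via Young's inequality: the term $e_k^\top G$ is split with weight so that its $\|G\|^2$ part equals $\tfrac{\hat\gamma}{4}\|G\|^2$, which is absorbed into the left side to leave exactly $\tfrac{\hat\gamma}{4}\|G\|^2$ (scaling by $\tfrac{4}{\hat\gamma}$ then produces precisely the $\tfrac{4}{\hat\gamma}(f(\hat x_k)-f(\hat x_{k+1}))$ coefficient); the two terms involving $\nabla f(\hat x_k)$ are bounded using $\|\nabla f(\hat x_k)\| \le C_f$ and split with a weight yielding the $L\hat\gamma C_f^2$ term; finally the residual $\|\delta_k\|^2$ and $\|e_k\|^2$ coefficients are crudely bounded using $L\hat\gamma \le \tfrac12$ to fit under the stated $\tfrac{80}{L\hat\gamma^3}$.

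The main obstacle is conceptual rather than computational: because $\hat x_k$ is generally infeasible ($\hat x_k \notin X^*_F$) and the projection is only computed inexactly, one cannot use the current iterate itself as the test point in the projection inequality. Routing the argument through the feasible surrogate $\bar x_k = \Pi_{X^*_F}[\hat x_k]$ is exactly what injects the $e_k$ terms, and keeping the $\delta_k$ and $e_k$ contributions cleanly separated from the genuine $\|G\|^2$ descent — so that both can subsequently be controlled through the inner-loop accuracy guarantees of Corollary~\ref{theorem: H is SC2} — is the delicate step. The precise constants $4$, $L\hat\gamma C_f^2$, and $80$ are artifacts of the Young's-inequality weights and are not essential; only their scaling in $\hat\gamma$ and $L$ matters for the telescoping that follows.
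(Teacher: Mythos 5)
Your proposal is correct and follows essentially the same route as the paper: the projection optimality condition for $\Pi_{X^*_F}[z_k]$ tested at the feasible surrogate $\Pi_{X^*_F}[\hat x_k]$ (which is exactly how the $e_k$ and $\delta_k$ terms enter), followed by the $L$-smoothness descent lemma and Young's inequality with $\hat\gamma$-dependent weights, under $\hat\gamma \le \tfrac{1}{2L}$. The only difference is organizational: the paper keeps $\|\hat x_{k+1}-\hat x_k\|^2$ as the intermediate quantity and converts to $\|G_{1/\hat\gamma}(\hat x_k)\|^2$ at the end via Lemma~\ref{subsection:error-metric}, whereas you substitute $\hat x_{k+1}-\hat x_k = \delta_k - \hat\gamma G_{1/\hat\gamma}(\hat x_k)$ from the outset and thereby inline that lemma; both bookkeepings yield the stated constants with room to spare.
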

In the next step, we derive bounds on the feasibility error term $e_k$ and the inexactness error term $\delta_k$. Of these, the bound on $e_k$ is obtained by directly invoking the rate results we developed for IR-EG$_{{\texttt{s,m}}}$. The bound on $\delta_k$, however, is less straightforward. \far{The proof of the next result is presented in section~\ref{sec_app_Upper bound for e_k}.} 
\begin{proposition} \label{Upper bound for e_k}\em
Consider problem~\eqref{eqn:optVI}. Let Assumption~\ref{assum:bilevelVI_nc} hold and let the sequence $\{{\hat x}_{k}\}$ be generated by Algorithm~\ref{alg:ncvx-IR-GD}. 

\noindent \far{[Case 1. $\mathcal{M}\geq  1$ and $\alpha$ is unavailable]} Then, for all $k\geq 1$, the following results hold. 
\begin{flalign*} 
\far{\hbox{(1-i)}}\ &\|e_k\| \leq \ses{
 \sqrt[\mathcal{M}]{ \left(\tfrac{25 D_X^2+17C_fD_X\hat{\gamma}}{\alpha\gamma}\right) \tfrac{\ln(T_{k})}{T_{k}} } 
} .\\
\far{\hbox{(1-ii)}}\ &\| \delta_k \|^2  \leq   \tfrac{2 D_X^2}{ 3  }  \tfrac{(\ln(T_{k}))^2}{T_{k}^2}+ 2\far{\sqrt[\mathcal{M}]{
 \left(\tfrac{25 D_X^2+17C_fD_X\hat{\gamma}}{\alpha\gamma}\right)^2 \tfrac{\ln(T_{k})^2}{T_{k}^2} 
} }\\
&+
 {2\hat{\gamma} C_f}\ses{\sqrt[\mathcal{M}]{
 \left(\tfrac{25 D_X^2+17C_fD_X\hat{\gamma}}{\alpha\gamma}\right) \tfrac{\ln(T_{k})}{T_{k}} 
} }.\end{flalign*}
\noindent \far{[Case 2. $\mathcal{M}=  1$ and $\alpha$ is available] Then, for all $k\geq 1$, the following results hold.
\begin{flalign*} 
\far{\hbox{(2-i)}}\     \|e_k\| \leq  \tfrac{ 2D_X^2}{{\gamma}\alpha}(1-0.5\gamma \eta  )^{T_{k-1}}.  \qquad 
\far{\hbox{(2-ii)}}\    \| \delta_k \|^2  \leq   \tfrac{4 D_X^2}{  \gamma \eta}(1- 0.5\gamma \eta  )^{T_k}. 
\end{flalign*}
 }
\end{proposition}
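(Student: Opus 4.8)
`````latex
The plan is to prove Proposition~\ref{Upper bound for e_k} by reducing each bound to the rate results already established for IR-EG$_{{\texttt{s,m}}}$ in Corollary~\ref{theorem: H is SC2}, applied at a fixed outer iteration $k$. The key observation is that at iteration $k$, the inner loop runs IR-EG$_{{\texttt{s,m}}}$ on the bilevel VI with outer mapping $H(x):=x-z_k = \nabla_x(\tfrac{1}{2}\|x-z_k\|^2)$, which is the gradient of a $\mu$-strongly convex, $L$-smooth function with $\mu=L=1$. Hence Corollary~\ref{theorem: H is SC2} applies with $\mu:=1$, $L:=1$, $\nabla f(x^*)$ replaced by $H(\Pi_{X^*_F}[z_k]) = \Pi_{X^*_F}[z_k]-z_k$, and the relevant constants $C_H, B_H$ bounded via $C_f, D_X$. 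The two cases of the proposition correspond exactly to the diminishing-regularization regime (Case~1 of the corollary) and the known-threshold constant-regularization regime (Case~3 of the corollary).

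For the feasibility error $e_k$, I would argue as follows. By Definition~\ref{def:IPREG_terms}, $\|e_k\| = \mbox{dist}(\hat x_k,\mbox{SOL}(X,F)) = \mbox{dist}(\bar y_{k-1,T_{k-1}},X^*_F)$, so I apply the inner-level distance bound at the \emph{previous} outer iteration, after $T_{k-1}$ inner steps. In Case~2 ($\mathcal{M}=1$, $\alpha$ known), part (3-i) of Corollary~\ref{theorem: H is SC2} gives $\mbox{dist}(\bar y_K,X^*_F) \leq \tfrac{\|x_0-x^*\|^2}{\gamma\alpha}(1-0.5\gamma\eta)^K$; substituting $K:=T_{k-1}$ and bounding $\|x_0-x^*\|^2 \leq 2D_X^2$ yields (2-i). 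In Case~1 ($\mathcal{M}\geq 1$, $\alpha$ unknown), I would instead start from the weak-sharpness relation $\alpha\,\mbox{dist}^{\mathcal{M}}(\hat x_k,X^*_F) \leq \mbox{Gap}(\hat x_k,X,F)$ (Lemma~\ref{lem:bilevelVI_gap_ws}~(iii)) and bound the inner-level gap using part (2-ii) of Corollary~\ref{theorem: H is SC2}; with the chosen schedule $\eta_k = 6\ln(T_k)/(\gamma T_k)$ (i.e.\ $p+1 = 3$ in the corollary's notation up to constants), the $\tfrac{\ln K}{K}$ term dominates and, after collecting the constants $D_X^2$ and $C_f D_X\hat\gamma$ into the stated numerator, I take the $\mathcal{M}$-th root to obtain (1-i).

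The bound on the inexactness $\delta_k$ is the harder part, and I expect it to be the main obstacle. The key algebraic step is the decomposition $\delta_k = \hat x_{k+1}-\Pi_{X^*_F}[z_k]$, where $\hat x_{k+1} = \bar y_{k,T_k}$ is the inner iterate but $\Pi_{X^*_F}[z_k]$ is the \emph{true} projection, i.e.\ the exact solution $x^*$ of the inner bilevel VI. I would split $\|\delta_k\|^2$ into a ``solution-convergence'' part controlled by $\|\bar y_{k,T_k}-x^*\|^2$ and a ``feasibility/distance'' part, using a triangle-type inequality such as $\|\delta_k\|^2 \leq 2\|\bar y_{k,T_k}-\Pi_{X^*_F}[\bar y_{k,T_k}]\|^2 + 2\|\Pi_{X^*_F}[\bar y_{k,T_k}]-x^*\|^2$ or by invoking the strong-convexity bound of Lemma~\ref{lem:bilevelVI_gap_ws}~(ii), which gives $\tfrac{\mu}{2}\|\bar y_{k,T_k}-x^*\|^2 \leq (f(\bar y_{k,T_k})-f(x^*)) + \|\nabla f(x^*)\|\,\mbox{dist}(\bar y_{k,T_k},X^*_F)$ with $\mu=1$. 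The three resulting terms then match the three summands in (1-ii): the suboptimality term gives the $\tfrac{(\ln T_k)^2}{T_k^2}$ contribution via part (2-i) of Corollary~\ref{theorem: H is SC2} (with $p$ chosen so $p\geq 1$ yields $K^{-p}\ln^{-1}K$ squared, or equivalently the direct $\|\bar y_K-x^*\|^2$ bound in (2-ii)), the squared-distance term gives the $\mathcal{M}$-th-root-squared contribution, and the cross term $\|\nabla f(x^*)\|\,\mbox{dist}(\cdot)$ gives the remaining $\hat\gamma C_f$ times the $\mathcal{M}$-th-root term, where I bound $\|\nabla f(x^*)\| = \|H(x^*)\|$-type quantities by $C_f$ and $\hat\gamma$. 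In Case~2, the analogous split uses parts (3-i) and (3-ii) of Corollary~\ref{theorem: H is SC2}, and since $\mathcal{M}=1$ makes both geometric factors $(1-0.5\gamma\eta)^{T_k}$ comparable, the bound collapses to the single term in (2-ii); the factor $\tfrac{4D_X^2}{\gamma\eta}$ comes directly from part (3-ii) with $\|x_0-x^*\|^2 \leq 2D_X^2$. The delicate bookkeeping will be carefully matching the leading constants (the numerators $25D_X^2+17C_fD_X\hat\gamma$ and the factor $2/3$) to the explicit constants produced by the corollary under the specific parameter choices $\eta_k = 6\ln(T_k)/(\gamma T_k)$ and $\mu=L=1$, which requires tracking how the logarithmic and polynomial terms combine.
`````
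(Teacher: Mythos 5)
Your proposal follows essentially the same route as the paper: reduce each bound to Corollary~\ref{theorem: H is SC2} applied to the inner projection problem with $H(x):=x-z_k$ (so $\mu=L=1$ and $p=2$, matching $\eta_k=6\ln(T_k)/(\gamma T_k)$), combine the inner-level gap bound with weak sharpness for $e_k$, and use the strong-convexity combination (2-iii), resp.\ (3-i)/(3-ii), for $\delta_k$. The one simplification you overlook is that no triangle-inequality split is needed for $\delta_k$, since $\delta_k=\bar y_{k,T_k}-\Pi_{X^*_F}[z_k]$ is exactly the error to the unique inner solution; the paper obtains the $\mathcal{M}$-th-root-squared term by bounding $\|\nabla h(\Pi_{X^*_F}[z_k])\|=\|\Pi_{X^*_F}[z_k]-z_k\|\le\|e_k\|+\hat\gamma C_f$ and reusing part (1-i), and in Case~2 one must also verify explicitly that the algorithm's choice $\eta\le \alpha L/(2\sqrt{2}D_XL+C_f)$ meets the $k$-dependent threshold $\alpha/(2\|\Pi_{X^*_F}[z_k]-z_k\|)$ uniformly in $k$.
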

Next, we derive iteration complexity guarantees for the IPR-EG method in computing a stationary point to problem~\eqref{eqn:optVI} when the objective function is nonconvex.  
\begin{tcolorbox}[colback=blue!5!white,colframe=blue!55!black]
\begin{theorem}[IPR-EG's guarantees for VI-constrained nonconvex optimization]\label{theorem:NC}\em 
Consider problem~\eqref{eqn:optVI}. Let Assumption~\ref{assum:bilevelVI_nc} hold. Let the sequence $\{\hat {x}_k\}$ be generated by \cref{alg:ncvx-IR-GD}. Define $\hat x_K^*$ such that $ \| G_{{1}/{\hat\gamma}}(\hat x_K^*) \|^2= \min_{k=\lfloor {\frac{K}{2}}\rfloor, \ldots, K-1} \| G_{{1}/{\hat\gamma}}(\hat x_k) \|^2.$ Then, the following results hold.

\medskip 

\noindent {\bf [\far{Case 1. $\mathcal{M}\geq  1$ and $\alpha$ is unavailable}]}

\noindent {\bf\far{(1-i)}} [Infeasibility bound] For all $1 \leq k \leq K$, we have 
$$ \mbox{dist} (\hat x_k, \mbox{SOL}(X,F)) \leq \ses{
 \frac{\sqrt[\mathcal{M}]{ {25 D_X^2+17C_fD_X\hat{\gamma}}{(\alpha\gamma)^{-1}} }\ln(\max\{k^{1.5},\sqrt[\mathcal{M}]{151}\})}{k^{1.5}}
}.$$

\noindent {\bf\far{(1-ii)}} [Residual mapping bound] Let $\hat \gamma = \frac{1}{\sqrt{K}}$ where  {$K\geq \max\{2,4L^2\}$}. Then, 
\begin{align*}
    \| G_{{1}/{\hat\gamma}}(\hat x_K^*) \|^2  &\leq   \tfrac{\theta_1}{\sqrt {K}}+\tfrac{\theta_2 \fyy{{(\ln (\fyy{  \max\{K^{1.5 \mathcal{M}},151\}  }))} }^2}{K\sqrt {K}}   \\
    &+ \tfrac{\theta_3{\fyy{{(\ln (\fyy{  \max\{K^{1.5 \mathcal{M}},151\}  }))} }}^{\tfrac{2}{\mathcal{M}}} }{K\sqrt{K}} +  \tfrac{\theta_4{\fyy{{(\ln (\fyy{  \max\{K^{1.5 \mathcal{M}},151\}  }))} }}^{\tfrac{1}{\mathcal{M}}} }{\sqrt{K}},
\end{align*}
\ses{where $D_f \triangleq \sup_{x,y \in X} (f(x)-f(y))$, $\theta_1\triangleq  8D_f+2 L\, C_f^2 $, $\theta_2\triangleq  \tfrac{\fyy{1920} D_X^2}{L }    $, $\theta_3 \triangleq  \tfrac{8640}{L } \sqrt[\mathcal{M}]{{
\left(\tfrac{25 D_X^2+17C_fD_X\hat{\gamma}}{\alpha\gamma}\right)}^2 
}  $, and $\theta_4\triangleq   \tfrac{\fyy{960}\sqrt{3}}{L } { C_f}\sqrt[\mathcal{M}]{{
\left(\tfrac{25 D_X^2+17C_fD_X\hat{\gamma}}{\alpha\gamma}\right) 
}}  $.}

\noindent {\bf \far{(1-iii)}} [Overall iteration complexity] Let $\epsilon>0$ be an arbitrary scalar. To achieve $ \| G_{{1}/{\hat\gamma}}(\hat x_K^*) \|^2 \leq \epsilon$, the total number of projections onto the set $X$ is $ {\tilde{\mathcal{O}}}(\epsilon^{-\far{3\mathcal{M}-2}})$ where $\tilde{\mathcal{O}}$ ignores polylogarithmic and constant numerical factors.

\medskip 

\far{
\noindent {\bf [\far{Case 2. $\mathcal{M}=1$ and $\alpha$ is available]}}

\noindent {\bf \far{(2-i)}} [Infeasibility bound] For all $1 \leq k \leq K$, 
 $$ \mbox{dist} (\hat x_k, \mbox{SOL}(X,F)) \leq     \tfrac{{ 2D_X^2}{(\alpha\gamma)^{-1}}}{k^2}.$$ 

\noindent {\bf \far{(2-ii)}} [Residual mapping bound] Let $\hat \gamma = \frac{1}{\sqrt{K}}$ where $K\geq \max\{2,4L^2\}$. Then, 
\ses{\begin{align*}
    \| G_{{1}/{\hat\gamma}}(\hat x_K^*) \|^2  &\leq  \left(8D_f  +  2 L  C_f^2  +\tfrac{12160 D_X^2}{\gamma L}(\tfrac{D_X^2}{\gamma\alpha^2}+\tfrac{1}{\eta})\right)\tfrac{1}{\sqrt{K}} .
\end{align*}}

\noindent {\bf \far{(2-iii)}} [Overall iteration complexity] The total number of projections onto the set $X$ improves to $ {\tilde{\mathcal{O}}}(\epsilon^{-2})$.}

\end{theorem}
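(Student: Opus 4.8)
The two infeasibility bounds are the most direct: (1-i) and (2-i) follow by feeding the two inner-loop schedules into Proposition~\ref{Upper bound for e_k}, recalling from Remark~\ref{rem:e_k_sol_rel} that $\mbox{dist}(\hat x_k,\mbox{SOL}(X,F))=\|e_k\|$. For (1-i) I would substitute $T_k=\max\{k^{1.5\mathcal M},151\}$ into part (1-i) of that proposition and simplify the logarithm via $\ln(\max\{k^{1.5\mathcal M},151\})=\mathcal M\,\ln(\max\{k^{1.5},\sqrt[\mathcal M]{151}\})$ together with $T_k^{-1/\mathcal M}\le k^{-1.5}$. For (2-i) I would substitute $T_{k-1}=\tau\ln k$ with $\tau\ge -2/\ln(1-0.5\eta\gamma)$, so that $(1-0.5\gamma\eta)^{T_{k-1}}\le k^{-2}$, turning the geometric bound of part (2-i) into the stated $k^{-2}$ rate.

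\textbf{Residual-mapping bounds.} These are the heart of (1-ii) and (2-ii). The plan is to start from Proposition~\ref{prop:upper_bound_gradient}, i.e.\ $\|G_{1/\hat\gamma}(\hat x_k)\|^2\le \tfrac{4}{\hat\gamma}(f(\hat x_k)-f(\hat x_{k+1}))+L\hat\gamma C_f^2+\tfrac{80}{L\hat\gamma^3}(\|\delta_k\|^2+\|e_k\|^2)$, and sum it over the window $k=\lfloor K/2\rfloor,\dots,K-1$. The first term telescopes and is bounded by $\tfrac{4}{\hat\gamma}D_f$, and the second contributes $\tfrac{K}{2}L\hat\gamma C_f^2$. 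Since $\hat x_K^*$ is the minimizer of $\|G_{1/\hat\gamma}(\cdot)\|^2$ over this window, the minimum is at most the average, introducing a factor $\tfrac{2}{K}$. Substituting $\hat\gamma=1/\sqrt K$ collapses the first two terms to $\theta_1/\sqrt K$, while the averaging coefficient on the error terms becomes $\tfrac{2}{K}\cdot\tfrac{80}{L\hat\gamma^3}=\tfrac{160\sqrt K}{L}$. It then remains to control $\tfrac{160\sqrt K}{L}\sum_{k=\lfloor K/2\rfloor}^{K-1}(\|\delta_k\|^2+\|e_k\|^2)$ using Proposition~\ref{Upper bound for e_k}.

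\textbf{The key balancing computation (main obstacle).} For Case 1 I would plug $T_k=k^{1.5\mathcal M}$ (the floor $151$ being inactive on the window once $K$ is large) and reduce each error term to a sum of the form $\sum_{k=\lfloor K/2\rfloor}^{K-1}(\ln k)^{c}k^{-d}$ with $d\in\{1.5,3,3\mathcal M\}$, bounding it by $(\ln K)^{c}\,\mathcal O(K^{1-d})$. The slowest-decaying piece is the last term of Proposition~\ref{Upper bound for e_k}(1-ii), where $d=1.5$: here $\sum_{k\ge K/2}(\ln k)^{1/\mathcal M}k^{-1.5}=\mathcal O((\ln K)^{1/\mathcal M}K^{-1/2})$, and after the factor $\tfrac{160\sqrt K}{L}$ together with the $\hat\gamma=K^{-1/2}$ carried inside that term, it lands at the baseline order $(\ln K)^{1/\mathcal M}/\sqrt K$, producing $\theta_4$; the remaining pieces decay faster and yield $\theta_2$ and $\theta_3$ at orders $(\ln K)^{2}/(K\sqrt K)$ and $(\ln K)^{2/\mathcal M}/(K\sqrt K)$. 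The delicate part is precisely the bookkeeping of the $\mathcal M$-th roots and logarithmic exponents, and recognizing that the exponent $1.5\mathcal M$ in $T_k$ is exactly what equates the projection error with the unavoidable $\mathcal O(1/\sqrt K)$ nonconvex baseline. Case 2 is structurally identical but simpler: the bounds of Proposition~\ref{Upper bound for e_k}(2-i),(2-ii) decay like $k^{-2}$ via $(1-0.5\gamma\eta)^{T_k}\le (k+1)^{-2}$, so $\sum_{k\ge K/2}(\|\delta_k\|^2+\|e_k\|^2)=\mathcal O(1/K)$ and the $\tfrac{160\sqrt K}{L}$ factor again produces an $\mathcal O(1/\sqrt K)$ bound.

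\textbf{Complexity.} Finally, each inner iteration performs one projection onto $X$, so the total number of projections is $\sum_{k=0}^{K-1}T_k$, which is $\tilde{\mathcal O}(K^{1.5\mathcal M+1})$ in Case 1 and $\tilde{\mathcal O}(K)$ in Case 2. From the residual bounds (1-ii)/(2-ii), whose dominant term is $\mathcal O(1/\sqrt K)$ up to logarithmic factors, driving $\|G_{1/\hat\gamma}(\hat x_K^*)\|^2$ below $\epsilon$ requires $K=\tilde{\mathcal O}(\epsilon^{-2})$ in both cases. Substituting this $K$ into the respective projection counts then yields the claimed overall complexities in (1-iii) and (2-iii).
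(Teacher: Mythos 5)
Your proposal follows essentially the same route as the paper's proof: both infeasibility bounds come from substituting the two $T_k$ schedules into Proposition~\ref{Upper bound for e_k}, the residual bounds come from summing Proposition~\ref{prop:upper_bound_gradient} over the window $k=\lfloor K/2\rfloor,\dots,K-1$, telescoping, bounding the minimum by the average, controlling $\sum_k(\|\delta_k\|^2+\|e_k\|^2)$ via the same $\sum (\ln k)^c k^{-d}$ estimates, and setting $\hat\gamma=1/\sqrt K$; the complexity counts in (1-iii)/(2-iii) are obtained identically from $\sum_k T_k$ with $K=\tilde{\mathcal O}(\epsilon^{-2})$. The only cosmetic discrepancy is that each inner extragradient step performs two projections rather than one, which is absorbed into the $\tilde{\mathcal O}$ constants.
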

\end{tcolorbox}

\begin{proof}
\noindent \far{(1-i)} This result follows from \cref{Upper bound for e_k} and \cref{rem:e_k_sol_rel}.

\noindent \far{(1-ii)} Taking the sum on the both sides of \cref{prop: uper G before sum} for $k=\lfloor {\frac{K}{2}}\rfloor, \ldots, K-1$, we obtain 
\begin{align*} 
 \textstyle\sum_{k= \lfloor {\frac{K}{2}}\rfloor}^{K-1}  \| G_{{1}/{\hat\gamma}}(\hat x_k) \|^2  \leq \tfrac{4\left(f(\hat x_{\lfloor {\frac{K}{2}}\rfloor})- f(\hat x_{K})\right)}{\hat\gamma}  +  K L\hat\gamma C_f^2  +\frac{80}{L\hat{\gamma}^3}\sum_{k= \lfloor {\frac{K}{2}}\rfloor}^{K-1}  ( \| \delta _k\|^2  + \| e_k \|^2).
\end{align*}
From the definition of $\hat x_K^*$ and $D_f$ and  $\tfrac{K}{2} \leq K-\lfloor\tfrac{K}{2}\rfloor$, we obtain 
\begin{align}\label{eqn:thm_G_1}
\tfrac{K}{2}    \| G_{{1}/{\hat\gamma}}(\hat x_K^*) \|^2  \leq \tfrac{4D_f}{\hat\gamma} +  K L\hat\gamma C_f^2  +\tfrac{80}{L\hat{\gamma}^3} \textstyle\sum_{k= \lfloor {\frac{K}{2}}\rfloor}^{K-1}  ( \| \delta _k\|^2  + \| e_k \|^2).
\end{align}
From \cref{Upper bound for e_k}, we have
\begin{align}\label{eqn:thm_G_2} \| e_k \|^2+   \| \delta_k \|^2  &\leq 
 \tfrac{2 D_X^2}{ 3  }  \tfrac{(\ln(T_{k}))^2}{T_{k}^2}+ 3\far{\sqrt[\mathcal{M}]{
 \left(\tfrac{25 D_X^2+17C_fD_X\hat{\gamma}}{\alpha\gamma}\right)^2 \tfrac{\ln(T_{k})^2}{T_{k}^2} 
} }\\
&+
 {2\hat{\gamma} C_f}\ses{\sqrt[\mathcal{M}]{
 \left(\tfrac{25 D_X^2+17C_fD_X\hat{\gamma}}{\alpha\gamma}\right) \tfrac{\ln(T_{k})}{T_{k}} 
} }. \notag
\end{align}
Recall that {$T_k = \max\{k^{1.5 \mathcal{M}},151\}$}. Invoking \cite[Lemma~9]{yousefian2017smoothing} \ses{and using $\lfloor {K/2}\rfloor \geq K/3>K/4$ for $K\geq 2$, we obtain \cref{2M}, \cref{1M}, and \cref{eqn:thm_G_3} as follows.}
\ses{\begin{align}\label{2M}
& \textstyle\sum_{k= \lfloor {\tfrac{K}{2}}\rfloor}^{K-1}  \tfrac{(\ln(T_{k}))^{\tfrac{2}{\mathcal{M}}}}{T_{k}^{\tfrac{2}{\mathcal{M}}}}  \leq {(\ln (\fyy{  \max\{K^{1.5 \mathcal{M}},151\}  }))}^{\tfrac{2}{\mathcal{M}}}\textstyle\sum_{k= \lfloor {\tfrac{K}{2}}\rfloor}^{K-1} \tfrac{1}{{k^3}} \\ \notag 
& \leq  {(\ln (\fyy{  \max\{K^{1.5 \mathcal{M}},151\}  }))}^{\tfrac{2}{\mathcal{M}}} \left( \lfloor {\tfrac{K}{2}}\rfloor ^{-2}  + \tfrac{K^{-2} - \lfloor {\tfrac{K}{2}}\rfloor ^{-2}}{-2}\right)\\\notag& \leq  {(\ln (\fyy{  \max\{K^{1.5 \mathcal{M}},151\}  }))}^{\tfrac{2}{\mathcal{M}}} \tfrac{18}{K^2} 
.
\end{align}
}
\ses{\begin{align}\label{1M}
& \textstyle\sum_{k= \lfloor {\tfrac{K}{2}}\rfloor}^{K-1}  \tfrac{(\ln(T_{k}))^{\tfrac{1}{\mathcal{M}}}}{T_{k}^{\tfrac{1}{\mathcal{M}}}}  \leq {(\ln (\fyy{  \max\{K^{1.5 \mathcal{M}},151\}  }))}^{\tfrac{1}{\mathcal{M}}}\textstyle\sum_{k= \lfloor {\tfrac{K}{2}}\rfloor}^{K-1} \tfrac{1}{{k^{1.5}}} \\\notag 
& \leq  {(\ln (\fyy{  \max\{K^{1.5 \mathcal{M}},151\}  }))}^{\tfrac{1}{\mathcal{M}}} \left( \lfloor {\tfrac{K}{2}}\rfloor ^{-0.5}  + \tfrac{K^{-0.5} - \lfloor {\tfrac{K}{2}}\rfloor ^{-0.5}}{-0.5}\right)\\\notag& \leq  {(\ln (\fyy{  \max\{K^{1.5 \mathcal{M}},151\}  }))}^{\tfrac{1}{\mathcal{M}}} \tfrac{\fyy{3} \sqrt{3}}{\sqrt{K}}  .
\end{align}
}
\begin{align}\label{eqn:thm_G_3}
 \textstyle\sum_{k= \lfloor {\tfrac{K}{2}}\rfloor}^{K-1}  \tfrac{(\ln(T_{k}))^2}{T_{k}^2}  &\leq (\ln (\fyy{  \max\{K^{1.5 \mathcal{M}},151\}  }))^2 \textstyle\sum_{k= \lfloor {\tfrac{K}{2}}\rfloor}^{K-1}  \tfrac{1}{{k}^3} \\
& =(\ln (\fyy{  \max\{K^{1.5 \mathcal{M}},151\}  }))^2 \textstyle\sum_{k= \lfloor {\tfrac{K}{2}}\rfloor -1}^{K-2}  \tfrac{1}{{(k+1)}^3} \notag\\
& \leq  (\ln (\fyy{  \max\{K^{1.5 \mathcal{M}},151\}  }))^2 \left(\lfloor K/2\rfloor^{-3} +\lfloor{K/2}\rfloor^{-2}-K^{-2} \right) \notag\\
& \leq \fyy{2(\ln (\fyy{  \max\{K^{1.5 \mathcal{M}},151\}  }))^2} \lfloor {K/2}\rfloor^{-2} \leq \fyy{\tfrac{18(\ln (\fyy{  \max\{K^{1.5 \mathcal{M}},151\}  }))^2}{K^2}}.\notag
\end{align}
 \ses{Now, from \cref{eqn:thm_G_1}, \cref{eqn:thm_G_2}, \cref{2M}, \cref{1M}, and \cref{eqn:thm_G_3}, we obtain }
\begin{align*}
    \| G_{{1}/{\hat\gamma}}(\hat x_K^*) \|^2  & \leq \tfrac{8D_f}{\hat\gamma K} +   2 L\hat\gamma C_f^2  \ses{ +\tfrac{160}{L \hat{\gamma}^3} \left(\tfrac{2 D_X^2}{ 3  }\tfrac{\fyy{18(\ln (\fyy{  \max\{K^{1.5 \mathcal{M}},151\}  }))^2}}{K^3} \right) } 
\\ & \ses{ +\tfrac{480}{L \hat{\gamma}^3} \left( {
\left(\tfrac{25 D_X^2+17C_fD_X\hat{\gamma}}{\alpha\gamma}\right)
}^{\tfrac{2}{\mathcal{M}}}
{18} \tfrac{{(\ln (\fyy{  \max\{K^{1.5 \mathcal{M}},151\}  }))}^{\tfrac{2}{\mathcal{M}}}}{K^3}
 \right) } \\ &
 \ses{ +\tfrac{160}{L \hat{\gamma}^3} \left( {2\hat{\gamma} C_f}{
\left(\tfrac{25 D_X^2+17C_fD_X\hat{\gamma}}{\alpha\gamma}\right)}^{\tfrac{1}{\mathcal{M}}}  \fyy{3} \sqrt{3} \tfrac{\fyy{{(\ln (\fyy{  \max\{K^{1.5 \mathcal{M}},151\}  }))}^{\tfrac{1}{\mathcal{M}}} }}{K\sqrt{K}}\right) }.
\end{align*}
Now, substituting \ses{$\hat \gamma :=\tfrac{1}{ \sqrt{K}}$} for $K\geq \max\{4L^2,2\}$, we obtain 
 \ses{\begin{align}  \label{r.h.s.G}
    \| G_{{1}/{\hat\gamma}}(\hat x_K^*) \|^2  & \leq \tfrac{8D_f+2 L\, C_f^2}{\sqrt {K}}  
 { +\tfrac{\fyy{1920} D_X^2}{L } \left(\tfrac{\fyy{(\ln (\fyy{  \max\{K^{1.5 \mathcal{M}},151\}  }))^2}}{K\sqrt {K}} \right) } 
\\\notag & { +\left(\tfrac{8640}{L } {\left(\tfrac{25 D_X^2+17C_fD_X\hat{\gamma}}{\alpha\gamma}\right)}^{\tfrac{2}{\mathcal{M}}}\right)  \left(\tfrac{{(\ln (\fyy{  \max\{K^{1.5 \mathcal{M}},151\}  }))}^{\tfrac{2}{\mathcal{M}}}}{K\sqrt{K}}\right) } 
\\\notag & { +\left( \tfrac{\fyy{960}\sqrt{3}}{L } { C_f}{\left(\tfrac{25 D_X^2+17C_fD_X\hat{\gamma}}{\alpha\gamma}\right) }^{\tfrac{1}{\mathcal{M}}}\right)   \left(\tfrac{\fyy{{(\ln (\fyy{  \max\{K^{1.5 \mathcal{M}},151\}  }))}^{\tfrac{1}{\mathcal{M}}} } }{\sqrt{K}}\right) }.
\end{align}}

 
\noindent \far{(1-iii)} Note that from \far{(1-ii)}, we have
\ses{$  \| G_{{1}/{\hat\gamma}}(\hat x_K^*) \|^2 \le \omega(K)$, where $\omega(K)$ is the right-hand side bound in  \cref{r.h.s.G} and is $\mathcal{O} \left( \tfrac{\ln(K)}{\sqrt{K}}\right) =\tilde{\mathcal{O}} \left( \tfrac{1}{\sqrt{K}}\right)$.}
 Consider \cref{alg:ncvx-IR-GD}. At each iteration in the outer loop, $2T_k$, i.e, \ses{$\max\{2k^{1.5\mathcal{M}},302\}$}, projections are performed onto the set $X$. Invoking~\cite[Lemma~9]{yousefian2017smoothing}, we conclude that the total number of projections onto the set $X$ is \ses{$\tilde{\mathcal{O}}(\epsilon^{-3\mathcal{M}-2})$}. 

\noindent \far{(2-i) From \cref{Upper bound for e_k} (case 2), substituting $T_k:= \tau\ln(k+1)$ we have 
\begin{align}
\mbox{dist} (\hat x_k, \mbox{SOL}(X,F)) &  \leq  \tfrac{ 2D_X^2}{{\gamma}\alpha}(1-0.5\gamma \eta  )^{\tau\ln(k)} \notag \\
 &\ses{=     \tfrac{ 2D_X^2}{{\gamma}\alpha}\left((1-0.5\gamma \eta  )^{\tau/2} \exp(1)\right)^{\ln(k^2)} \tfrac{1}{k^2} }   \leq \tfrac{ 2D_X^2}{{\gamma}\alpha} \tfrac{1}{k^2}, \notag
\end{align}
where in the last inequality we used $\tau\geq \tfrac{-2}{\ln(1-0.5\eta\gamma)} $.

\noindent  (2-ii) From \cref{Upper bound for e_k}, in a similar vein to the proof of (2-i), we have
\ses{\begin{align*} \| e_k \|^2+   \| \delta_k \|^2  &\leq  \tfrac{ 4D_X^4}{{\gamma^2}\alpha^2}(1-0.5\gamma \eta  )^{2\tau\ln(k)}+\tfrac{4 D_X^2}{  \gamma \eta}(1- 0.5\gamma \eta  )^{\tau\ln(k+1)} \notag\\
& \leq \tfrac{ 4D_X^4}{{\gamma^2}\alpha^2} \tfrac{1}{k^4} +\tfrac{4 D_X^2}{  \gamma \eta} \tfrac{1}{{(k+1)}^2} \leq \tfrac{4 D_X^2}{\gamma}(\tfrac{D_X^2}{\gamma \alpha^2}+\tfrac{1}{\eta})\tfrac{1}{k^2}. 
\end{align*}}
Thus, from \cite[Lemma~9]{yousefian2017smoothing} we obtain 
\ses{$$\textstyle\sum_{k= \lfloor {\tfrac{K}{2}}\rfloor}^{K-1}  ( \| \delta _k\|^2  + \| e_k \|^2)   \leq \tfrac{76D_X^2}{\gamma}(\tfrac{D_X^2}{\gamma \alpha^2}+\tfrac{1}{\eta}) \tfrac{1}{K},$$ }
where we used $\lfloor {K/2}\rfloor \geq K/4$ for $K\geq 2$. Consider \eqref{eqn:thm_G_1}. From the preceding relation, we obtain $
    \| G_{{1}/{\hat\gamma}}(\hat x_K^*) \|^2  \leq \tfrac{8D_f}{\hat\gamma K} +  2 L\hat\gamma C_f^2  +\tfrac{160}{L\hat{\gamma}^3K}\left(\ses{\tfrac{76D_X^2}{\gamma}(\tfrac{D_X^2}{\gamma \alpha^2}+\tfrac{1}{\eta}) \tfrac{1}{K}}\right).$ Substituting $\hat{\gamma}=\tfrac{1}{\sqrt{K}}$ we obtain the bound. 
    
    \noindent \far{(2-iii)}  Consider \cref{alg:ncvx-IR-GD} under the setting of a constant regularization parameter. At each iteration in the outer loop, $2T_k$, i.e, $2\tau\ln(k+1)$, projections are performed onto the set $X$. Invoking~\cite[Lemma~9]{yousefian2017smoothing}, we conclude that the total number of projections onto the set $X$ is $\sum_{k=0}^{\mathcal{O}(\epsilon^{-2})}\tau\ln(k+1) = {\mathcal{O}}(\epsilon^{-2}\ln(\epsilon^{-1}))$. 
}
\end{proof}

\begin{remark}
Theorem~\ref{theorem:NC} provides new complexity guarantees in computing a stationary point to VI-constrained nonconvex optimization. To our knowledge, such guarantees for computing the optimal NE in nonconvex settings did not exist before.
\end{remark}
\section{Numerical experiments}\label{sec:num}
We provide preliminary empirical results to validate the performance of the proposed methods in computing optimal equilibria. 
\far{In subsection~\ref{sec:num1}, we} consider a simple illustrative example of a two-person zero-sum Nash game, provided in~\cite{jalilzadeh2024stochastic}, where the set $\mbox{SOL}(X,F)$ is explicitly available. \far{In subsection~\ref{sec:num2}, we consider the Nguyen and Dupuis traffic equilibrium problem~\cite{nguyen1984efficient} and compare the performance of the proposed methods with existing methods in~\cite{facchinei2014vi}.}

\subsection{\far{An illustrative example}}\label{sec:num1}
\far{Consider a two-person zero-sum game given as}\\
\noindent\begin{minipage}{.47\linewidth}
\begin{equation*}
\Bigg\{\begin{aligned}
 &\min_{x_1} \   f_1(x_1,x_2) \triangleq 20-0.1x_1x_2+x_1 \\
& \hbox{s.t.}  \   x_1 \in X_1 \triangleq \{x_1 \mid 11 \leq x_1\leq 60\}, 
\end{aligned}
\end{equation*}
\end{minipage} 
\begin{minipage}{.47\linewidth}
\begin{equation*}
\Bigg\{\begin{aligned}
 &\min_{x_2} \   f_2(x_1,x_2) \triangleq -20+0.1x_1x_2-x_1 \\
& \hbox{s.t.}  \  x_2 \in X_2 \triangleq \{x_2 \mid 10 \leq x_2\leq 50\}. 
\end{aligned}
\end{equation*}
\end{minipage}
\vspace{.1in}

Note that the above game can be formulated as $\mbox{VI}(X,F)$ where $F(x)\triangleq Ax+b$, with $A=[ 0,-0.1  ;0.1,0 ]$ and $b=[1;0]$, and the set $X\triangleq X_1\times X_2$. The mapping $F$ is \far{monotone}, in view of $u\fyy{^\top}Au=0$ for all $u \in \mathbb{R}^2$. Also, $F$ is $L_F$-Lipschitz with $L_F = \|A\|=0.1$, where $\|\bullet\|$ denotes the spectral norm. Further, this game has a spectrum of equilibria given by $\mbox{SOL}(X,F) = \{(x_1,x_2)\mid 11 \leq x_1 \leq 60, \ x_2=10\}$.
 
In the experiments, we seek the best and the worst NE with respect to the metric $\psi(x) \triangleq \frac{1}{2}\|x\|^2$. The \fyy{best equilibrium seeking} problem is $\min_{x \in {\tiny \mbox{SOL}(X,F)}} \ f(x)\triangleq \psi(x)$ with the unique best NE as $x^*_{{\tiny \mbox{B-NE}} }= (11,10)$,  while the \fyy{worst equilibrium seeking} problem is $\min_{x \in {\tiny \mbox{SOL}(X,F)}} \ f(x)\triangleq -\psi(x)$ with the unique worst NE as $x^*_{{\tiny \mbox{W-NE}}} = (60,10)$. This is the unique stationary point to $\min_{x \in {\tiny \mbox{SOL}(X,F)}} \ -\psi(x)$, in view of the condition $0 \in -\nabla \psi(x^*_{{\tiny \mbox{W-NE}}} ) + \mathcal{N}_{\tiny \mbox{SOL}(X,F)}(x^*_{{\tiny \mbox{W-NE}}})$, where $\mathcal{N}$ denotes the normal cone. 

\far{We implement the proposed methods in the convex, strongly convex, and nonconvex cases. We implement IR-EG$_{{\texttt{m,m}}}$ and IR-EG$_{{\texttt{s,m}}}$ for computing the best NE. For these methods, we use the same constant stepsize $\gamma:=1/(2\|A\|_{\mathcal{F}})$ where $\|\bullet\|_{\mathcal{F}}$ denotes the Frobenius norm. For IR-EG$_{{\texttt{m,m}}}$, we let $\eta_0=0.01$ and for IR-EG$_{{\texttt{s,m}}}$, we use the self-tuned constant regularization parameter in Corollary~\ref{theorem: H is SC2}. We implement IPR-EG for computing the worst NE where we follow the parameter choices outlined in Algorithm~\ref{alg:ncvx-IR-GD} in the setting when $\alpha$ and $\mathcal{M}$ are not available.}
\begin{figure}[htbp]
  \centering
  \begin{subfigure}[b]{0.45\textwidth}
    \centering
    \includegraphics[width=\textwidth, height=5cm]{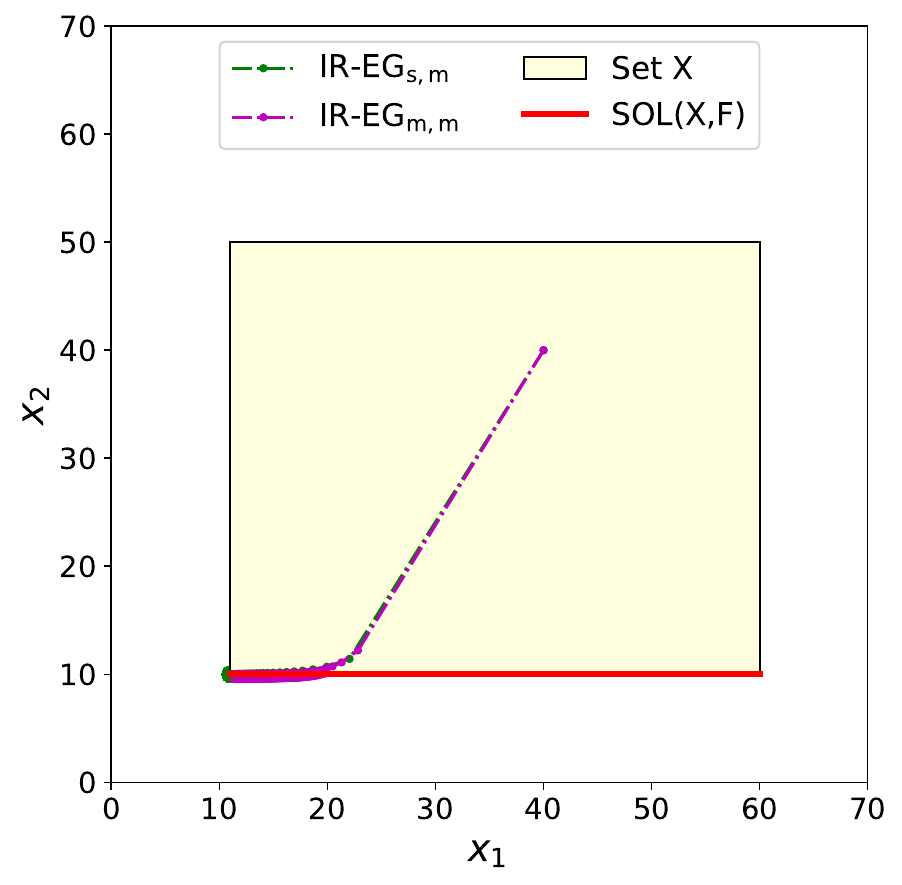}
  \end{subfigure}
  \hfill
  \begin{subfigure}[b]{0.45\textwidth}
    \centering
    \includegraphics[width=\textwidth, height=5cm]{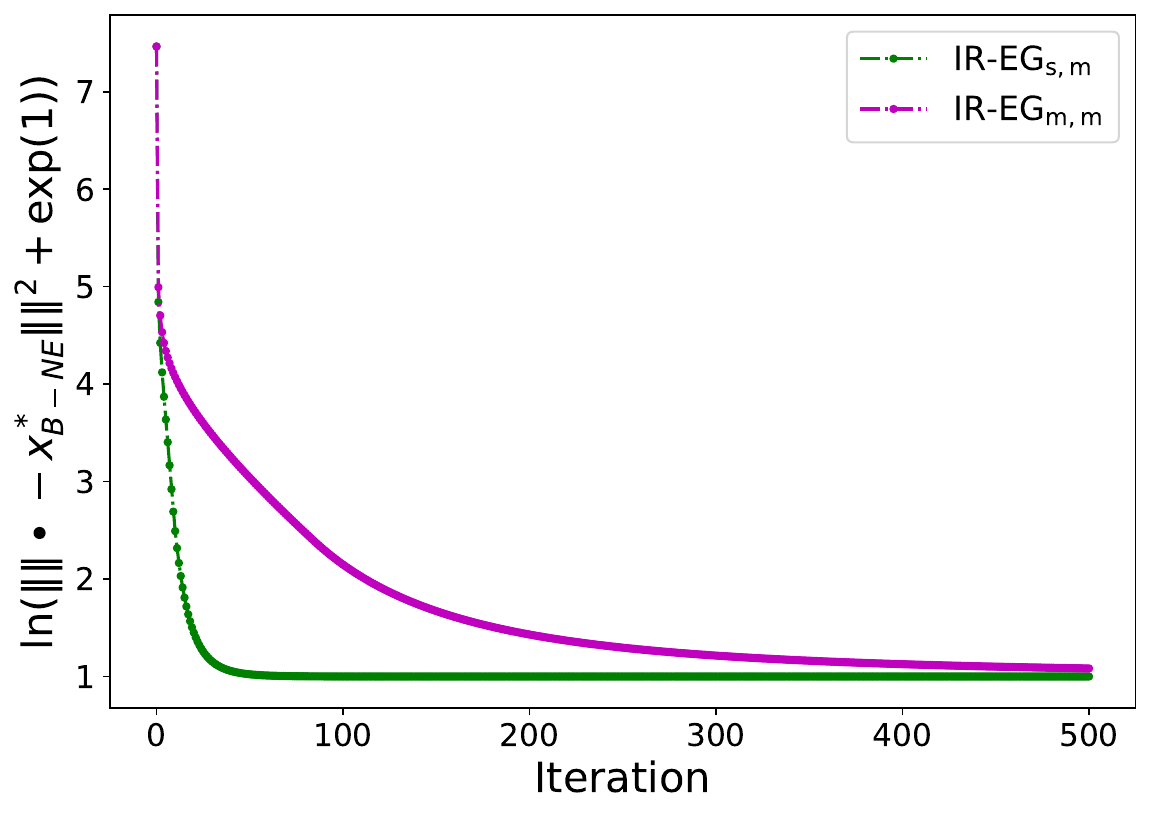}
  \end{subfigure}
  \vspace{-0.1in}
  \caption{Computing the best NE using IR-EG$_{{\texttt{m,m}}}$ and IR-EG$_{{\texttt{s,m}}}$}
  \label{fig:E1}
\end{figure}

\begin{figure}[htbp]
  \centering
  \begin{subfigure}[b]{0.45\textwidth}
    \centering
    \includegraphics[width=\textwidth, height=5cm]{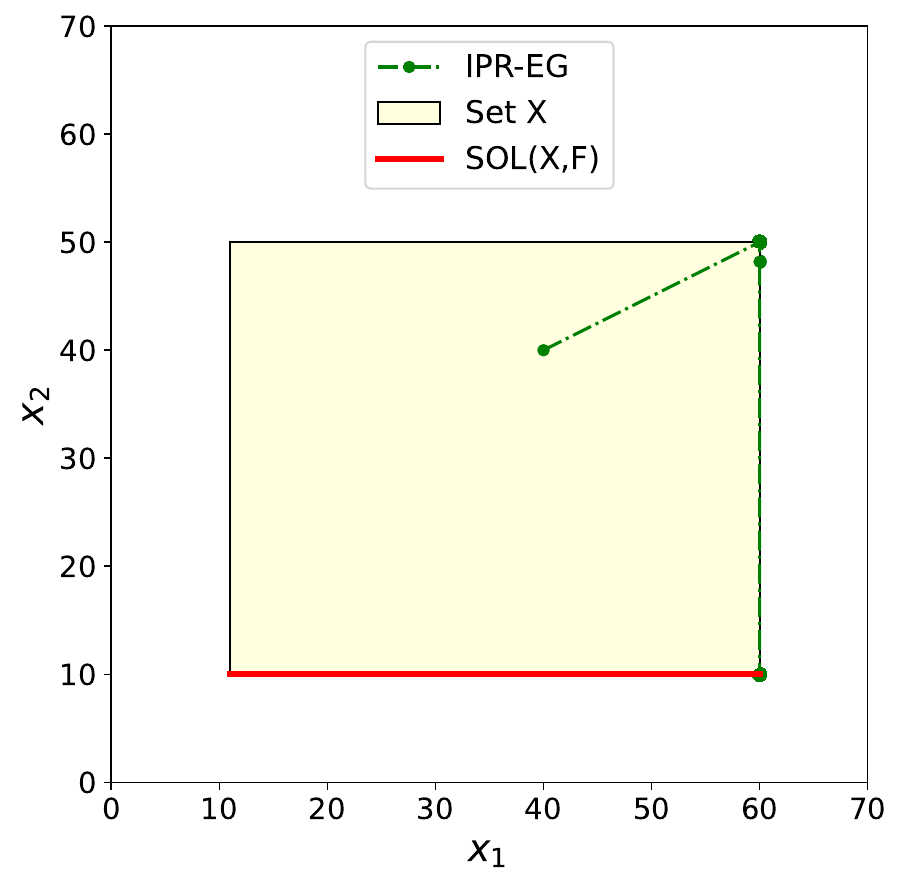}
  \end{subfigure}
  \hfill
  \begin{subfigure}[b]{0.45\textwidth}
    \centering
    \includegraphics[width=\textwidth, height=5cm]{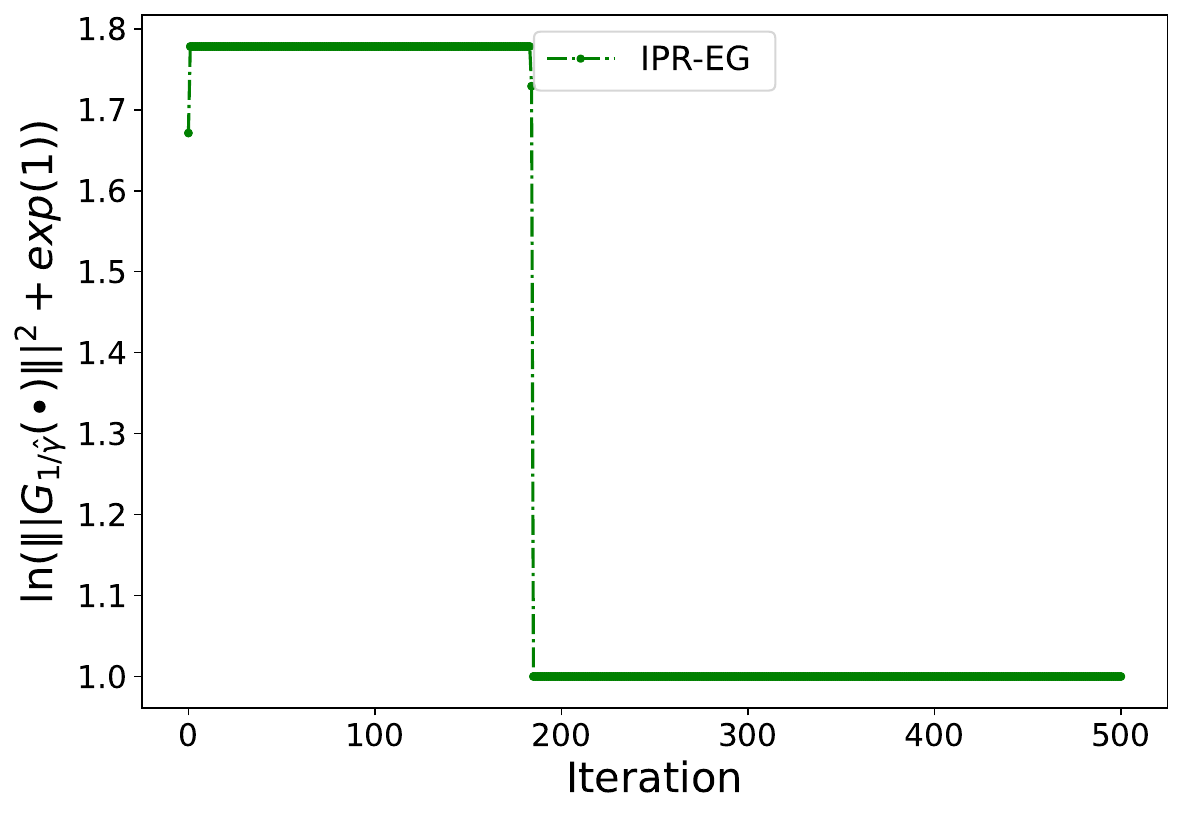}
  \end{subfigure}
  \vspace{-0.1 in}
  \caption{Computing the worst NE using IPR-EG}
  \label{fig:E2}
\end{figure}

 In computing the best NE in \ref{fig:E1}, IR-EG$_{{\texttt{s,m}}}$ outperforms IR-EG$_{{\texttt{m,m}}}$. This observation is indeed aligned with the accelerated rate statements we obtained for IR-EG$_{{\texttt{s,m}}}$. This is because unlike in IR-EG$_{{\texttt{s,m}}}$ where we utilize the strong convexity of the objective function $f$, IR-EG$_{{\texttt{m,m}}}$ is devised primarily for addressing \ssrtwo{ convex} objectives and it does not leverage the strong convexity property of $f$.  In computing the worst NE in \ref{fig:E2}, the sequence of iterates generated by IPR-EG appears to converge to the unique stationary point $x^*_{{\tiny \mbox{W-NE}}} = (60,10)$. This is indeed aligned with the theoretical guarantees in Theorem~\ref{theorem:NC}. 

\far{\subsection{Nguyen and Dupuis traffic equilibrium problem}\label{sec:num2}
We consider the Nguyen and Dupuis traffic equilibrium problem~\cite{nguyen1984efficient,chen2012stochastic}. The traffic flow network is shown in Figure~\ref{fig:traffic} containing 13 nodes, 19 arcs, 25 paths, and 4 origin-destination (OD) pairs between the origin nodes $\{n_1, n_4\}$ and destination nodes $\{n_2, n_3\}$. The arcs are indexed by $a=1,\ldots, 19$ and the paths are indexed by $p=1,\ldots,25$. Let $d=[d_1;\ldots; d_4]$ denote the travel demand vector between OD pairs where $d_1$, $d_2$, $d_3$, and $d_4$ are associated with demand between OD pairs $(n_1,n_2)$, $(n_1,n_3)$, $(n_4,n_2)$, and $(n_4,n_3)$, respectively. Next, we formulate this traffic network as a nonlinear complementarity problem (NCP) by adopting the path formulation~\cite[Ch. 1]{facchinei2003finite}. 
Let $h = [h_1; \ldots; h_{25}]$ and $ \mathcal{F}= [\mathcal{F}_1; \ldots; \mathcal{F}_{19}]$ denote the traffic flow vector associated with the paths and the arcs, respectively. Let matrix $\Delta$ denote the arc-path incidence matrix with entries $\delta_{a,p}$ where $\delta_{a,p}=1$ if path $p$ traverses arc $a$, and  $\delta_{a,p}=0$ otherwise. Let the arc travel time function be denoted by $c_a({\mathcal {F}}_a)$. }

\begin{wrapfigure}{r}{5.4cm}
\includegraphics[width=5.3cm]{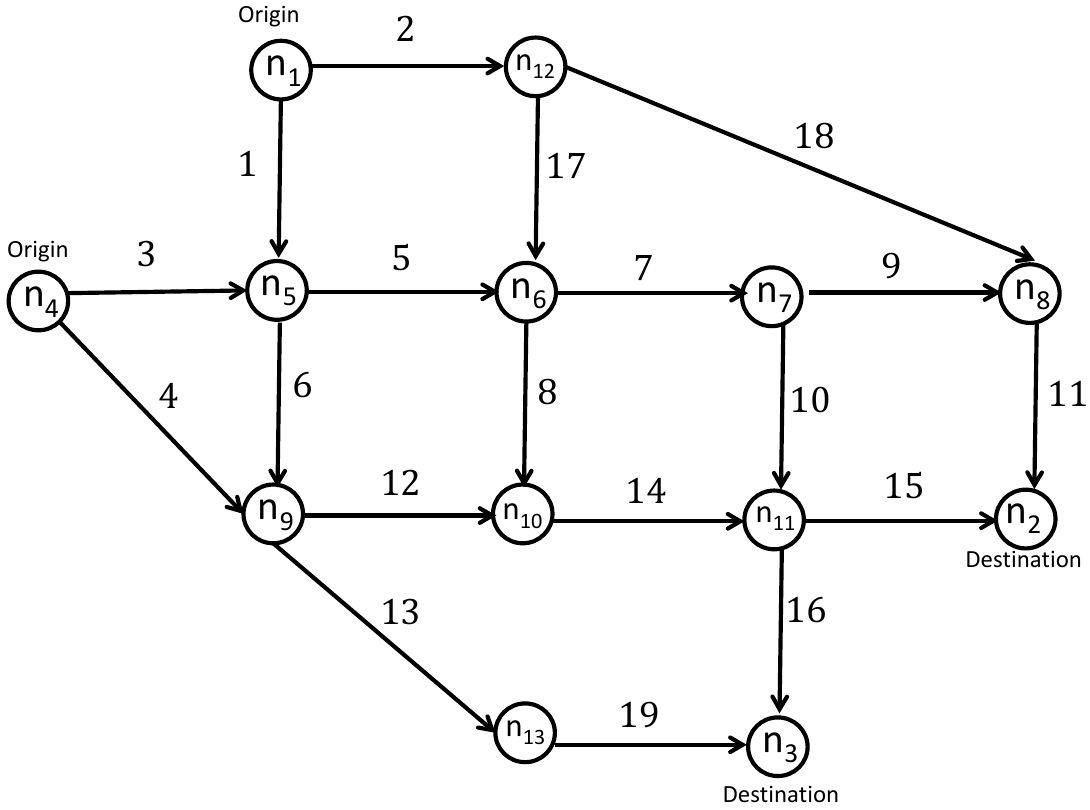}
 \caption{Nguyen and Dupuis network}\label{fig:traffic}
\vspace*{-2mm}
\end{wrapfigure}

\far{In the experiments, we consider the Bureau of Public Road arc travel time function~\cite{yin2009robust} given by $c_a({\mathcal {F}}_a) = t_a^0 (1+ 0.15 (\frac{{\mathcal {F}}_a}{cap_a})^{n_a})$  where $n_a \geq 1$, and $t_a^0$ and $cap_a$ denote free-flow travel time and capacity  of arc $a$, respectively. Note that $\mathcal {F} =\Delta h $ which implies that the path cost map $C:\mathbb{R}^{25} \to \mathbb{R}^{19}$ is given by $C(h) = \Delta \fyy{^\top} c(\Delta h)$~\cite[Ch. 1]{facchinei2003finite}, where $c:\mathbb{R}^{19} \to \mathbb{R}^{19}$ denotes the arc cost mapping given by $c(\mathcal{F}) = [c_1(\mathcal{F}_1);\ldots;c_{19}(\mathcal{F}_{19})]$. Let $u =[u_1; u_2; u_3;u_4 ] $ denote the unknown vector of minimum travel costs between OD pairs. Let $\Omega$ denote the (OD pair, path)-incidence matrix. Invoking the Wardrop user equilibrium principle, the traffic user equilibrium pair $x=[h;u]$ solves the $\mbox{NCP}(F)$, formulated succinctly as $0 \leq x \perp F(x) \geq 0$, where $F(x) = \begin{bmatrix} C(h)-\Omega\fyy{^\top}u; \Omega h-d\end{bmatrix} $. Note that when $n_a=1$ for all arcs, the equilibrium problem is cast as a linear complementarity problem $\mbox{LCP}(F)$. In the following, we show that when $n_a \geq 0$, the mapping $F$ is monotone. The proof of this result is presented in section~\ref{sec:proof_lemma_num-proof-mon-F}.
\begin{lemma}\label{num-proof-mon-F}\em 
Consider the aforementioned Nguyen and Dupuis traffic equilibrium problem with $n_a\geq 0$ for all $a$. Then, the mapping $F(x)$ is monotone on $\mathbb{R}^{29}_+$. 
\end{lemma}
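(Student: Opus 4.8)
The plan is to exploit the block structure of $F$ and reduce monotonicity of the full mapping to monotonicity of the scalar arc-cost functions $c_a$. Write $x=[h;u]$ and $y=[h';u']$ with $h,h'\in\mathbb{R}^{25}$ and $u,u'\in\mathbb{R}^{4}$. First I would compute the defining inner product directly, using $F(x)-F(y)=\left[C(h)-C(h')-\Omega^\top(u-u');\,\Omega(h-h')\right]$:
\begin{align*}
(F(x)-F(y))^\top(x-y) &= \left(C(h)-C(h')-\Omega^\top(u-u')\right)^\top(h-h') \\
&\quad + \left(\Omega(h-h')\right)^\top(u-u').
\end{align*}
The key observation is that the two cross terms $-(u-u')^\top\Omega(h-h')$ and $(h-h')^\top\Omega^\top(u-u')$ are scalars that are transposes of one another, hence equal, so they cancel. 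In other words, the coupling between the flow block $h$ and the multiplier block $u$ is skew-symmetric and contributes nothing to the quadratic form, leaving
$$(F(x)-F(y))^\top(x-y)=(C(h)-C(h'))^\top(h-h').$$

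Next I would reduce monotonicity of the path-cost map $C$ to that of the arc-cost map $c$. Since $C(h)=\Delta^\top c(\Delta h)$, setting $\mathcal{F}=\Delta h$ and $\mathcal{F}'=\Delta h'$ yields
$$(C(h)-C(h'))^\top(h-h')=(c(\mathcal{F})-c(\mathcal{F}'))^\top\Delta(h-h')=(c(\mathcal{F})-c(\mathcal{F}'))^\top(\mathcal{F}-\mathcal{F}').$$
Because $c$ is separable, this equals $\sum_{a=1}^{19}(c_a(\mathcal{F}_a)-c_a(\mathcal{F}_a'))(\mathcal{F}_a-\mathcal{F}_a')$, so it suffices to show that each scalar function $c_a$ is nondecreasing on the relevant domain. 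Here I would verify the domain: since $x\in\mathbb{R}^{29}_+$ we have $h\ge 0$, and because $\Delta$ is a $0/1$ incidence matrix (entrywise nonnegative) it follows that $\mathcal{F}=\Delta h\ge 0$, and likewise $\mathcal{F}'\ge 0$. On $[0,\infty)$ the map $s\mapsto t_a^0\left(1+0.15(s/cap_a)^{n_a}\right)$ is nondecreasing whenever $t_a^0,cap_a>0$ and $n_a\ge 0$, since $s\mapsto s^{n_a}$ is nondecreasing there. As any nondecreasing scalar function $\phi$ satisfies $(\phi(s)-\phi(t))(s-t)\ge 0$, every summand is nonnegative, so the quadratic form is nonnegative and $F$ is monotone on $\mathbb{R}^{29}_+$.

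The main obstacle is not any single computation but the care required to confirm that the arguments $\mathcal{F}_a$ remain nonnegative before invoking monotonicity of $c_a$: for a noninteger exponent $n_a$ the power $s^{n_a}$ is only defined and monotone on $[0,\infty)$, so the nonnegativity $\mathcal{F}=\Delta h\ge 0$, guaranteed jointly by $h\ge 0$ and the nonnegativity of $\Delta$, is essential and must be established first. Everything else is the routine cancellation of the skew-symmetric coupling and the reduction through $\Delta^\top c(\Delta\,\cdot\,)$ to a separable sum of one-dimensional monotone maps.
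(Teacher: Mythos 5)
Your proof is correct, but it takes a genuinely different route from the paper. The paper computes the Jacobian $J(x)$ of $F$ explicitly, observes that the off-diagonal blocks $\Omega^{\top}$ and $-\Omega$ cancel under symmetrization, and argues that the remaining block $\Delta^{\top}\bigl[O_1\cdots O_{25}\bigr]+\bigl[O_1\cdots O_{25}\bigr]^{\top}\Delta$ (which is in fact $2\Delta^{\top}\mathrm{diag}(w)\Delta$ for a nonnegative weight vector $w$) yields a nonnegative quadratic form, i.e.\ it verifies monotonicity through positive semidefiniteness of the symmetrized Jacobian. You instead verify the first-order inequality $(F(x)-F(y))^{\top}(x-y)\ge 0$ directly: the same skew-symmetric coupling cancels at the level of the inner product, and the residual term $(c(\mathcal{F})-c(\mathcal{F}'))^{\top}(\mathcal{F}-\mathcal{F}')$ decomposes into one-dimensional increments of the nondecreasing scalar maps $c_a$ on $[0,\infty)$. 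Your version is more elementary and, notably, more robust: it requires no differentiability of $c_a$, whereas the paper's Jacobian entries involve $(\delta_a^{\top}h)^{n_a-1}$, which is undefined at $\delta_a^{\top}h=0$ when $0<n_a<1$, so the derivative-based argument needs extra care exactly in the regime $n_a\in[0,1)$ that the lemma claims to cover. What the paper's approach buys in exchange is the explicit Jacobian formula, which exposes the structure $\Delta^{\top}\mathrm{diag}(w)\Delta$ and could be reused (e.g.\ in the convexity argument of the subsequent lemma). Your observation that $\mathcal{F}=\Delta h\ge 0$ must be established before invoking monotonicity of $s\mapsto s^{n_a}$ is the right point of care and is handled correctly.
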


In the experiments in this section, we consider computing the best and the worst traffic equilibrium with respect to the aggregated travel cost over the network given by $f(x) = \mathbf{1}_{25}\fyy{^\top} C(h) $, where $\mathbf{1}_{25}$ is a column vector with all $25$ entries equal to $1$. Next, we show that $f$ is convex. \far{The proof is presented in section~\ref{sec:proof_lemma_num-proof-cvx-f}.}
\begin{lemma}\label{num-proof-cvx-f} \em
Consider the aforementioned Nguyen and Dupuis traffic equilibrium problem with $n_a\geq 1$ for all $a$. The function $f(x) = \mathbf{1}_{25}\fyy{^\top} C(h) $ is convex. 
\end{lemma}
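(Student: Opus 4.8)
The plan is to rewrite $f$ as a nonnegatively weighted sum of the scalar arc cost functions evaluated at the arc flows, and then argue convexity termwise. First I would use the identity $\mathbf{1}_{25}^\top \Delta^\top = (\Delta \mathbf{1}_{25})^\top$ to write $f(x) = \mathbf{1}_{25}^\top \Delta^\top c(\Delta h) = (\Delta\mathbf{1}_{25})^\top c(\Delta h) = \sum_{a=1}^{19} m_a\, c_a\!\left((\Delta h)_a\right)$, where $m_a \triangleq \sum_{p=1}^{25}\delta_{a,p}$ is the number of paths traversing arc $a$. Since each $\delta_{a,p}\in\{0,1\}$, every $m_a$ is a nonnegative integer. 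Note also that $f$ does not depend on the component $u$, and that on the feasible domain $\mathbb{R}^{29}_+$ we have $h\geq 0$ together with $\Delta\geq 0$ entrywise, so every arc flow $(\Delta h)_a$ is nonnegative; this is what lets us restrict attention to the half-line $[0,\infty)$ where the power terms are well behaved.

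Next I would establish that each scalar function $c_a(s)=t_a^0\bigl(1+0.15(s/cap_a)^{n_a}\bigr)$ is convex on $[0,\infty)$. Because $n_a\geq 1$, the power function $s\mapsto s^{n_a}$ is convex on $[0,\infty)$, and composing with the positive scaling $s\mapsto s/cap_a$ (with $cap_a>0$) preserves convexity; multiplying by the nonnegative constant $0.15\,t_a^0$ and adding the constant $t_a^0$ then leaves convexity intact. Finally I would assemble these facts: each map $h\mapsto(\Delta h)_a$ is affine, so $h\mapsto c_a\!\left((\Delta h)_a\right)$ is the composition of a convex function with an affine map and hence convex on $\{h : \Delta h\geq 0\}$, in particular on $\mathbb{R}^{25}_+$. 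Since $f=\sum_{a} m_a\, c_a\!\left((\Delta h)_a\right)$ is a nonnegative linear combination of these convex functions, $f$ is convex, completing the argument.

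The argument is essentially bookkeeping combined with standard convexity-preserving operations, so no single step is a serious obstacle; the only genuine content — and the place where the hypothesis $n_a\geq 1$ and the nonnegativity of the flows are both used — is the convexity of the power term on $[0,\infty)$. I would take care to state explicitly that convexity is asserted over the relevant feasible region $\mathbb{R}^{29}_+$ (equivalently, over $h\geq 0$), since for general exponents the power function need not even be real-valued, let alone convex, for negative arguments.
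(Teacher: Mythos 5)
Your proof is correct, and it takes a genuinely different route from the paper's. The paper proceeds by brute force: it writes $f(x)=\sum_{a=1}^{19}(\delta_a^\top\mathbf{1}_{25})\,g_a(\delta_a^\top h)$, computes $\nabla_h f$ and $\nabla^2_h f$ explicitly via the chain rule, and verifies that the quadratic form of the Hessian is nonnegative, using $n_a\geq 1$ and the entrywise nonnegativity of $\Delta$ at the final step. You instead avoid second derivatives entirely by observing that $f=\sum_a m_a\, c_a((\Delta h)_a)$ with $m_a=\sum_p\delta_{a,p}\geq 0$, that each scalar $c_a$ is convex on $[0,\infty)$ when $n_a\geq 1$, and that precomposition with the linear map $h\mapsto\delta_a^\top h$ and nonnegative combination preserve convexity. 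The two decompositions are essentially the same (your $m_a$ is the paper's $\delta_a^\top\mathbf{1}_{25}$); the difference is in how convexity of each summand is certified. Your first-principles argument is arguably more robust: the paper's Hessian formula contains the factor $(\delta_a^\top h)^{n_a-2}$, which is singular at $\delta_a^\top h=0$ when $1\leq n_a<2$ (the power function $s\mapsto s^{n_a}$ is then not twice differentiable at the origin), whereas your argument covers the boundary of the orthant without any such caveat. The paper's computation, on the other hand, produces the explicit gradient and Hessian, which is incidental information not needed for the lemma itself. Your closing remark about restricting to $h\geq 0$ is apt and matches the role the nonnegativity of $\Delta$ and $h$ plays in the paper's final inequality.
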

}


\far{
\subsection*{Experiments and setup} {\bf (E1)} In the first experiment, we let $n_a=1$ for all arcs and consider the minimization of the total travel cost function $f(x) = \mathbf{1}_{25}\fyy{^\top} C(h) $ over all traffic equilibria associated with $\mbox{VI}(\mathbb{R}^{29}_+,F)$ where $F(x) = \begin{bmatrix} C(h)-\Omega\fyy{^\top}u; \Omega h-d\end{bmatrix}$. This is a VI-constrained convex minimization. We implement Algorithm~\ref{alg:IR-EG} for solving this problem and compare its performance with Algorithm~1 in~\cite{facchinei2014vi}, which we refer to by ISR-cvx. The ISR-cvx method is suitable for addressing problem~\eqref{eqn:optVI} is essentially a two-loop regularized gradient scheme where at the $k$th iteration in the outer loop, the regularized $\mbox{VI}(X, F_k)$ is inexactly solved, where $\ssrtwo{F_k(\bullet)} = F(\bullet) + \eta_k  \nabla f (\bullet) + \tilde{\alpha} (\bullet - x_k)$ where $\tilde{\alpha}>0$ is a scalar. In the outer loop, the regularization parameter $\eta_k$ is updated, while in the inner loop, a gradient method is employed for computing and an $e_k$-inexact solution to the regularized VI. {\bf (E2)} In the second experiment, we repeat (E1) for the case when $n_a=1.2$. {\bf (E3)} Here, we assume that $n_a=1.2$ for all arcs. We are interested in computing the worst equilibrium by considering the maximization of the total travel cost function $f$ over $\mbox{SOL}(\mathbb{R}^{29}_+,F)$. We implement Algorithm~\ref{alg:ncvx-IR-GD} for solving this problem and compare its performance with Algorithm~3 in~\cite{facchinei2014vi}, which we refer to by ISR-ncvx. The ISR-ncvx method is a three-loop method and employs ISR-cvx at each iteration to inexactly compute a projection onto the set of equilibria. ISR-ncvx is characterized by two parameters $\delta$ and $\beta$ and leverages a line search technique to find a suitable stepsize at each iteration. In all the experiments, we let $d= [400, 800, 600, 450]$ and let the arc capacity be given by $$ cap = 100\times [7.5, 3.5, 1.5, 7.5, 3, 3.5, 7.5, 2, 2, 2.5, 5, 5, 5.5, 6.5, 4.5, 2.5,1.5, 3.5, 5.5].$$ We let $t^0 =[7, 9, 9, 12, 3, 9, 5, 13, 5, 9, 9, 10, 9, 5, 9,8,7,14,11 ] .$ We consider $\|z_{k+1} -z_k\|$ to measure suboptimality of the methods of interest, where $z_k$ denotes the iterate generated by the underlying method. We use the metric $\phi(x) = \| \max\{0, -x\}\|^2 + \| \max\{0, -F(x)\}\|^2 + |x\fyy{^\top} F(x)|$ to measure the infeasibility in the methods~\cite{kaushik2023incremental}. The ISR-cvx method is guaranteed to converge when $\sum_{k=0}^{\infty} \eta_k = \infty$ and $\left\{  {e_k}/{\eta_k} \right\} \to 0$. To meet these conditions, we use \ssrtwo{$\eta_k =\frac{\eta_0}{{(k+1)^{\tilde{b}}}}$, where $0 <\tilde{b} \leq 1 $} in ISR-cvx and terminate the inner-level gradient method after $T_k=k+1$ number of iterations.} 

\far{\subsection*{Results and insights} The implementation results for experiments (E1), (E2), and (E3) are presented in Figures~\ref{fig:cvx-lin-cap}, \ref{fig:cvx-non-cap}, and \ref{fig:ncvx-non-cap}, respectively. We make the following key observations. (i) In (E1), Algorithm~\ref{alg:IR-EG} appears to outperform ISR-cvx in all the 18 settings in terms of the suboptimality metric. Also, Algorithm~\ref{alg:IR-EG} displays a steady improvement in the infeasibility metric and reaches a smaller error than that of ISR-cvx in all the settings. (ii) In (E2), the comparison between the two methods stays akin to that in (E1). Notably, ISR-cvx shows more sensitivity to the change in $n_a$ from $1$ in (E1) to $1.2$ in (E2). (iii) In the nonconvex case studied in (E3), we observe that Algorithm~\ref{alg:ncvx-IR-GD}  outperforms ISR-ncvx in terms of the infeasibility metric. We do observe that Algorithm~\ref{alg:ncvx-IR-GD} performs worse in terms of the suboptimality metric. However, this is perhaps because ISR-ncvx appears to be relatively slow in making improvements in terms of the infeasibility metric, and thus, the consecutive generated iterates by this method, even far from being feasible, stay relatively close to each other, which results in a smaller suboptimality error. (iv) Lastly to display the distinctions between the performance of ISR-ncvx in (E3) with respect to the different settings, we suppress Algorithm~\ref{alg:ncvx-IR-GD} in Figure~\ref{fig:ncvx-non-cap} and present the performance of ISR-ncvx separately in Figure~\ref{fig:ncvx-non-cap-isr}.
\begin{table*}[t]
	\renewcommand\thetable{2}
	\setlength{\tabcolsep}{0pt}
	\centering
	\begin{tabular}{c |  c  c  c c}
			\rotatebox[origin=c]{90}{{\scriptsize{ $\log$(suboptimality metric)}}}
			&
			\begin{minipage}{.3\textwidth}
				\includegraphics[scale=.197, angle=0]{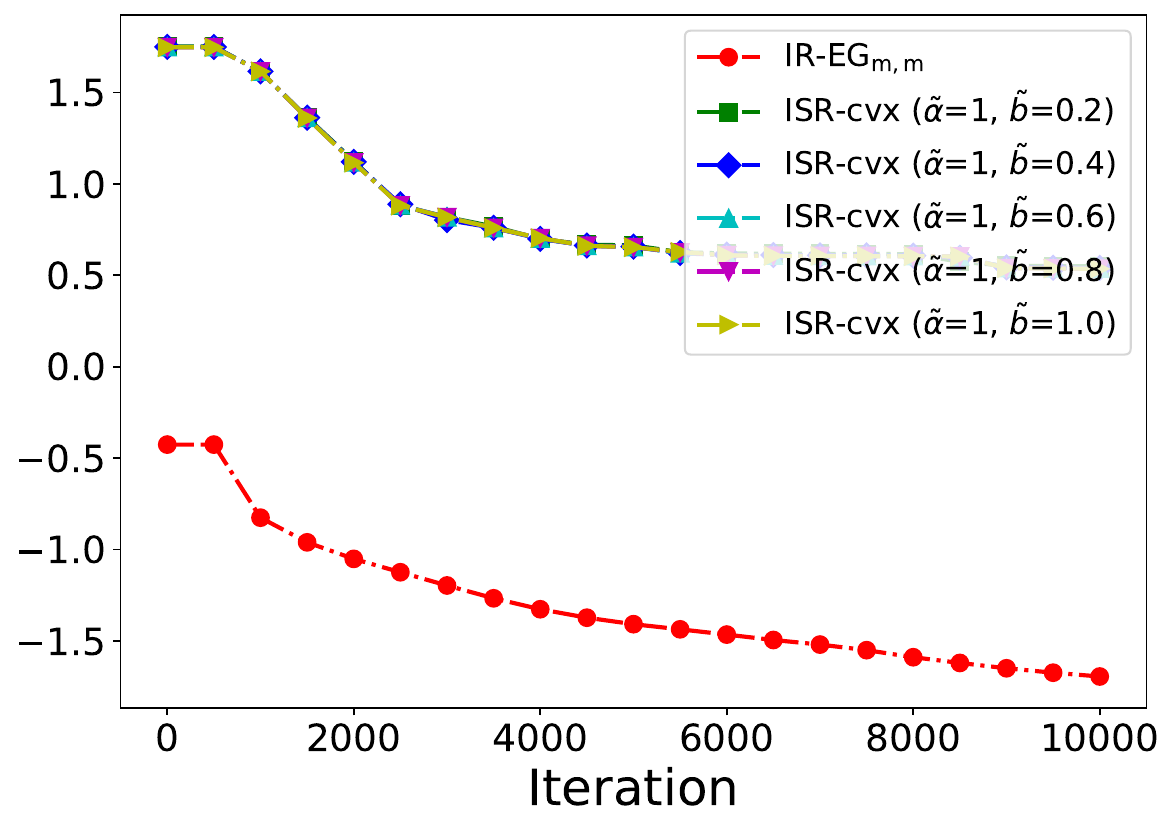}
			\end{minipage}
			&
			\begin{minipage}{.3\textwidth}
				\includegraphics[scale=.197, angle=0]{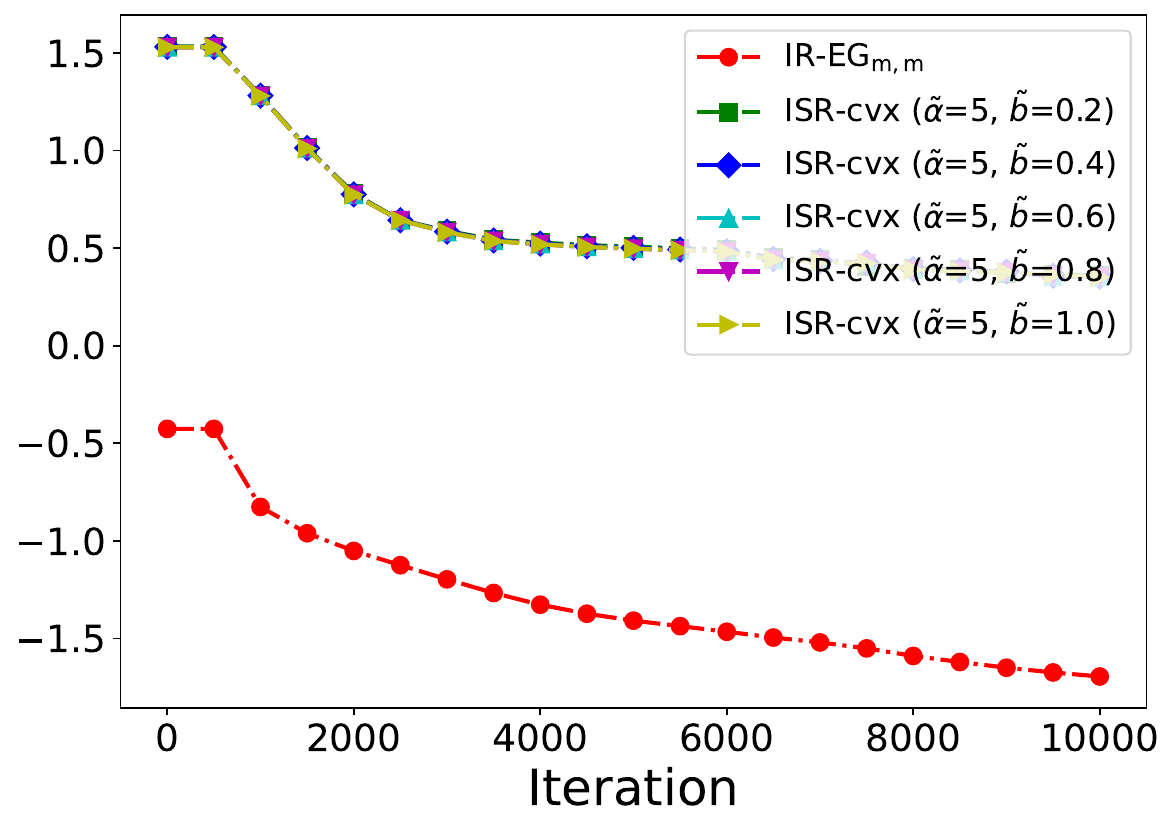}
			\end{minipage}
			&
			\begin{minipage}{.3\textwidth}
				\includegraphics[scale=.197, angle=0]{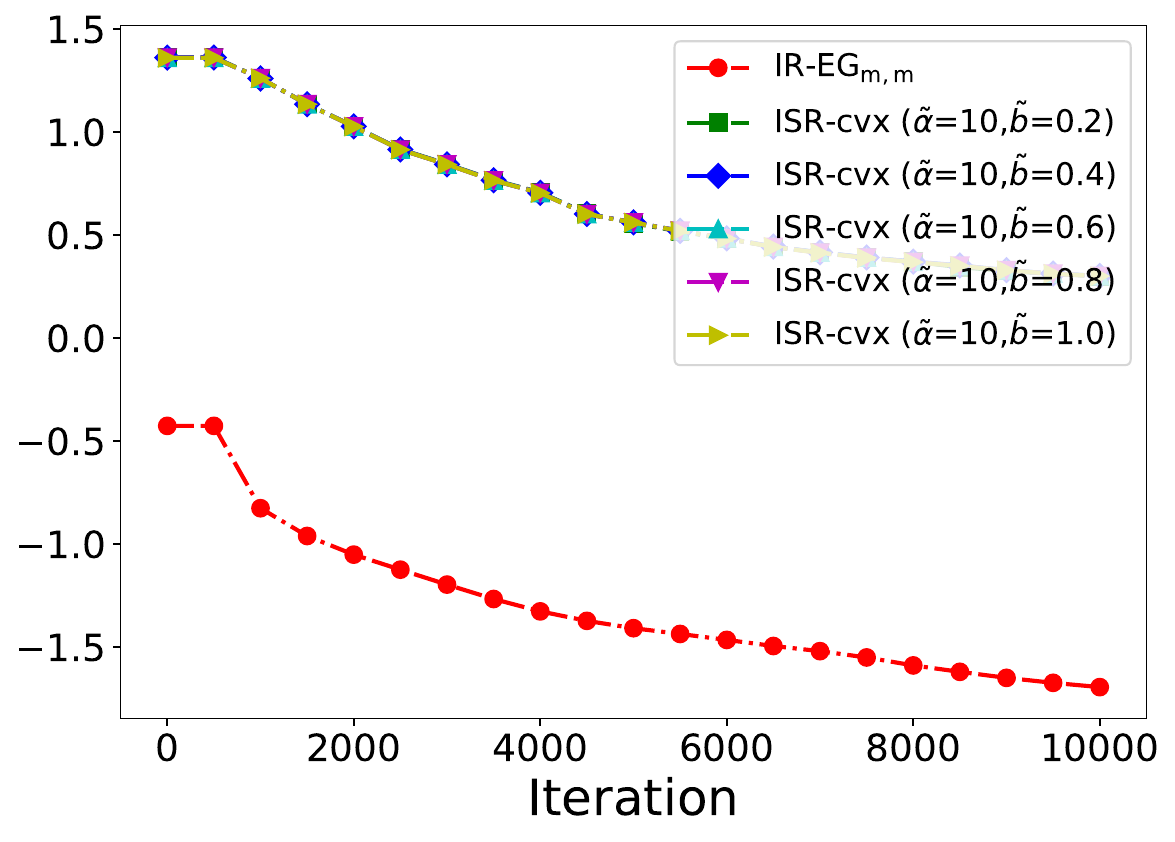}
			\end{minipage}
			\\
			\hline\\
			\rotatebox[origin=c]{90}{{\scriptsize{ $\log$(infeasibility metric)}}}
			&
			\begin{minipage}{.3\textwidth}
				\includegraphics[scale=.2, angle=0]{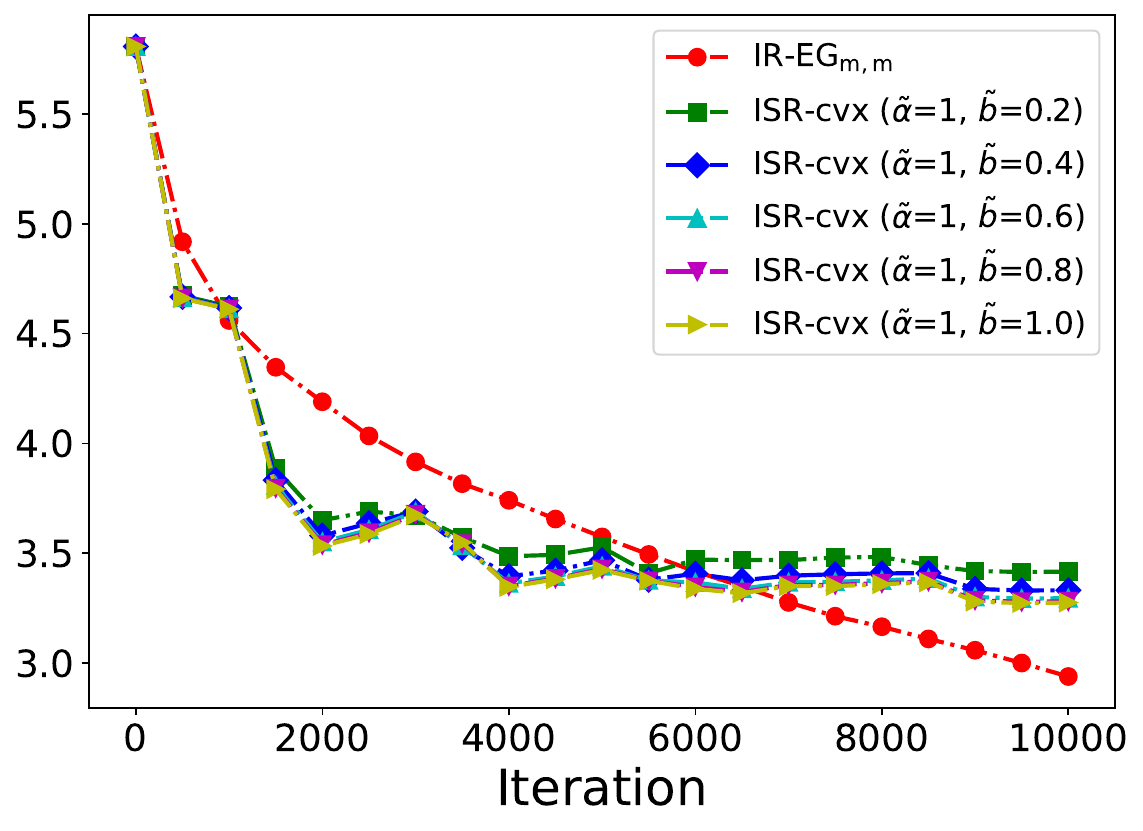}
			\end{minipage}
			&
			\begin{minipage}{.3\textwidth}
				\includegraphics[scale=.2, angle=0]{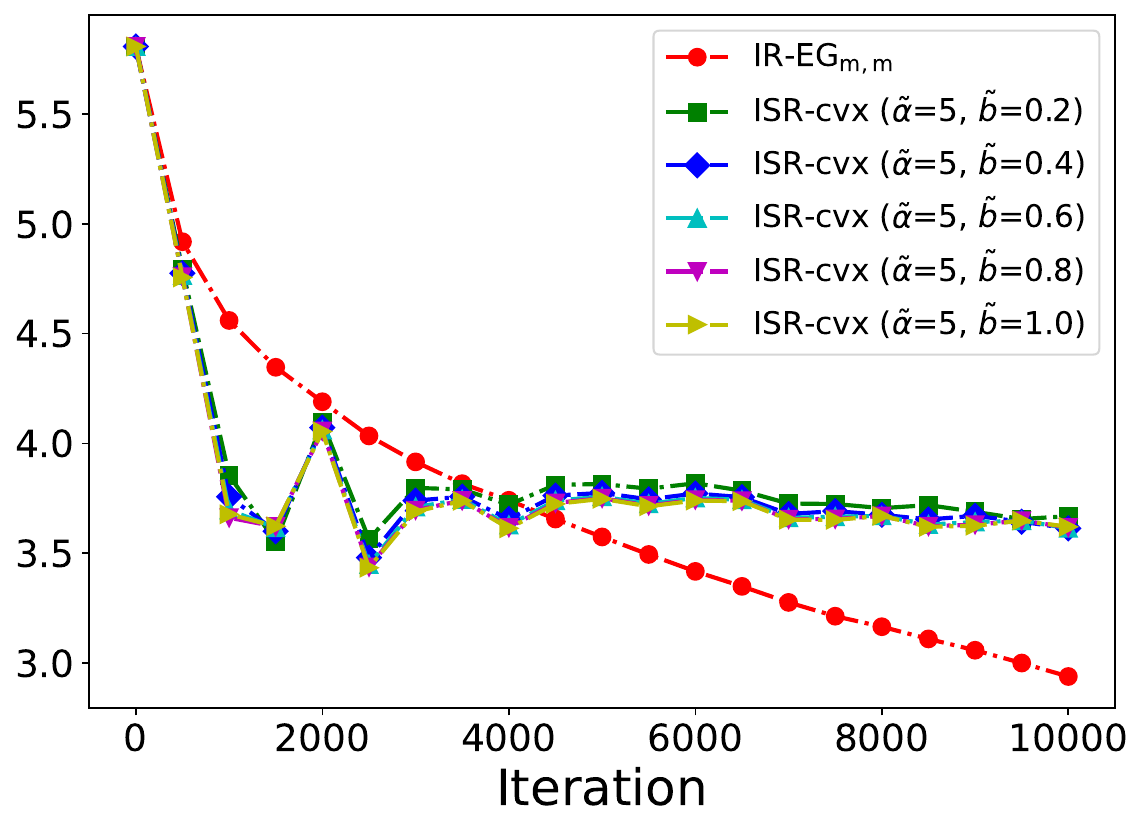}
			\end{minipage}
			&
			\begin{minipage}{.3\textwidth}
				\includegraphics[scale=.2, angle=0]{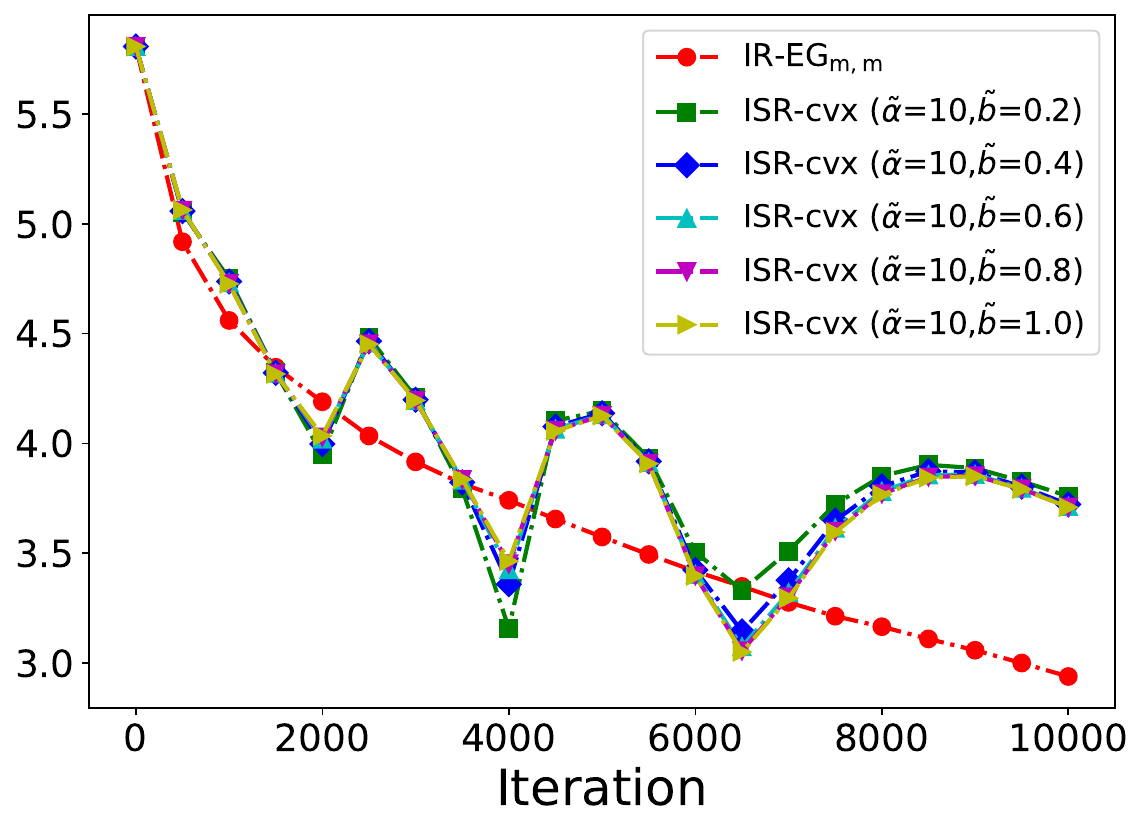}
			\end{minipage}
	\end{tabular}
	\captionof{figure}{Algorithm~\ref{alg:IR-EG} vs. ISR-cvx for computing the best NE when $n_a=1$ in all arcs.}\label{fig:cvx-lin-cap}
\end{table*}

\begin{table*}[t]
	\renewcommand\thetable{3}
	\setlength{\tabcolsep}{0pt}
	\centering{
		\begin{tabular}{c|  c  c  c c}
			\rotatebox[origin=c]{90}{{\scriptsize { $\log$(suboptimality metric)}}}
			&
			\begin{minipage}{.3\textwidth}
				\includegraphics[scale=.197, angle=0]{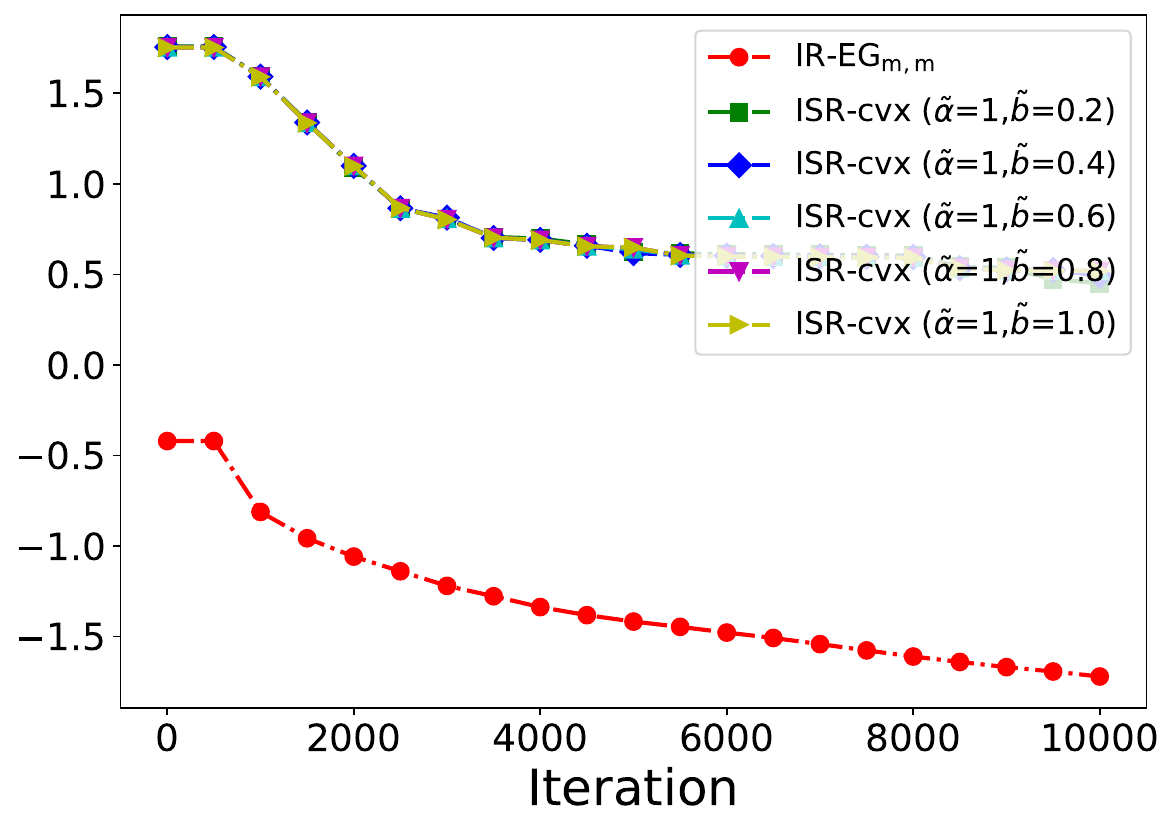}
			\end{minipage}
			&
			\begin{minipage}{.3\textwidth}
				\includegraphics[scale=.197, angle=0]{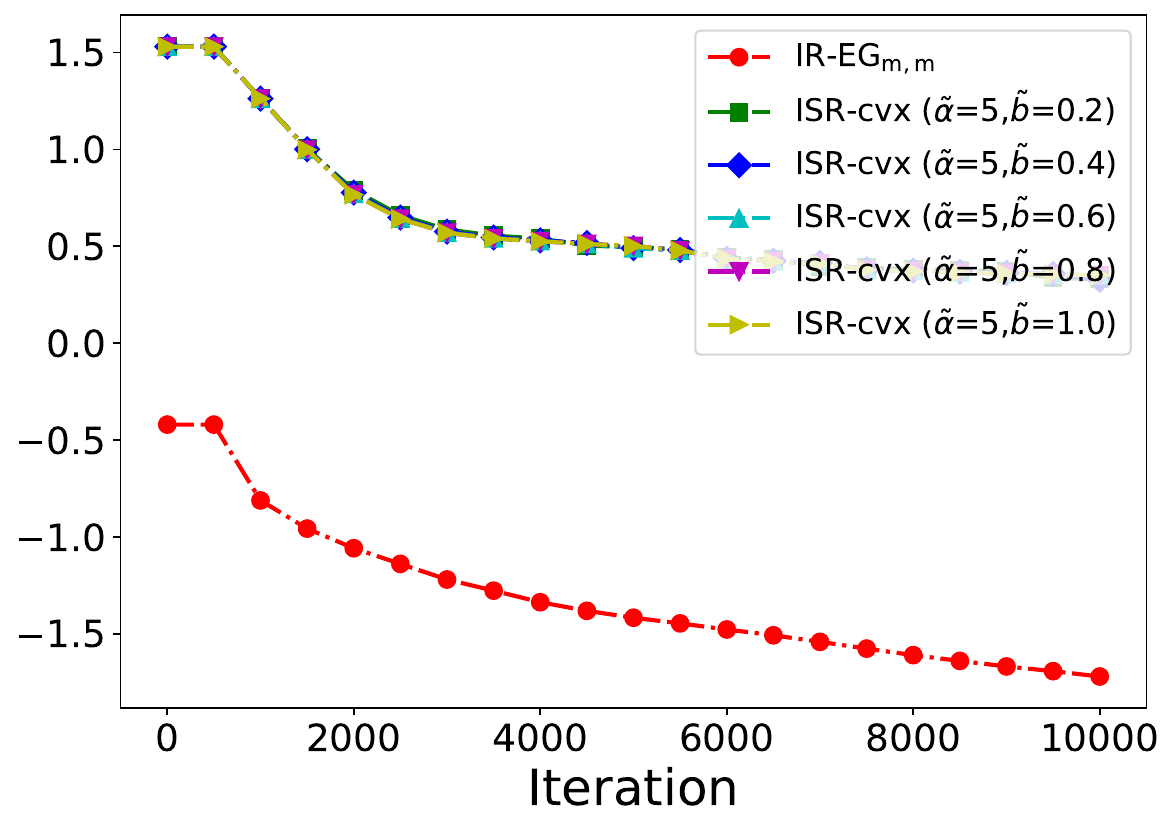}
			\end{minipage}
			&
			\begin{minipage}{.3\textwidth}
				\includegraphics[scale=.197, angle=0]{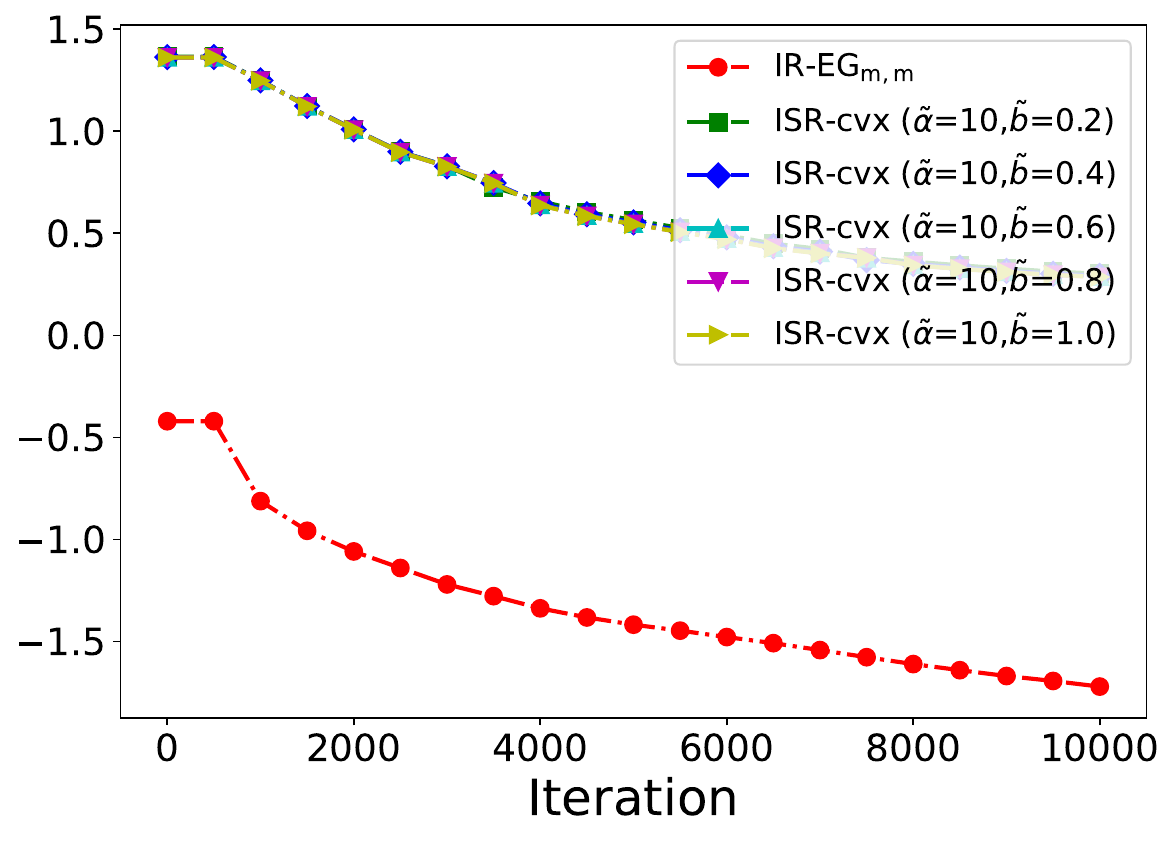}
			\end{minipage}
			\\
			\hline\\
			\rotatebox[origin=c]{90}{{ \scriptsize{ $\log$(infeasibility metric)}}}
			&
			\begin{minipage}{.3\textwidth}
				\includegraphics[scale=.2, angle=0]{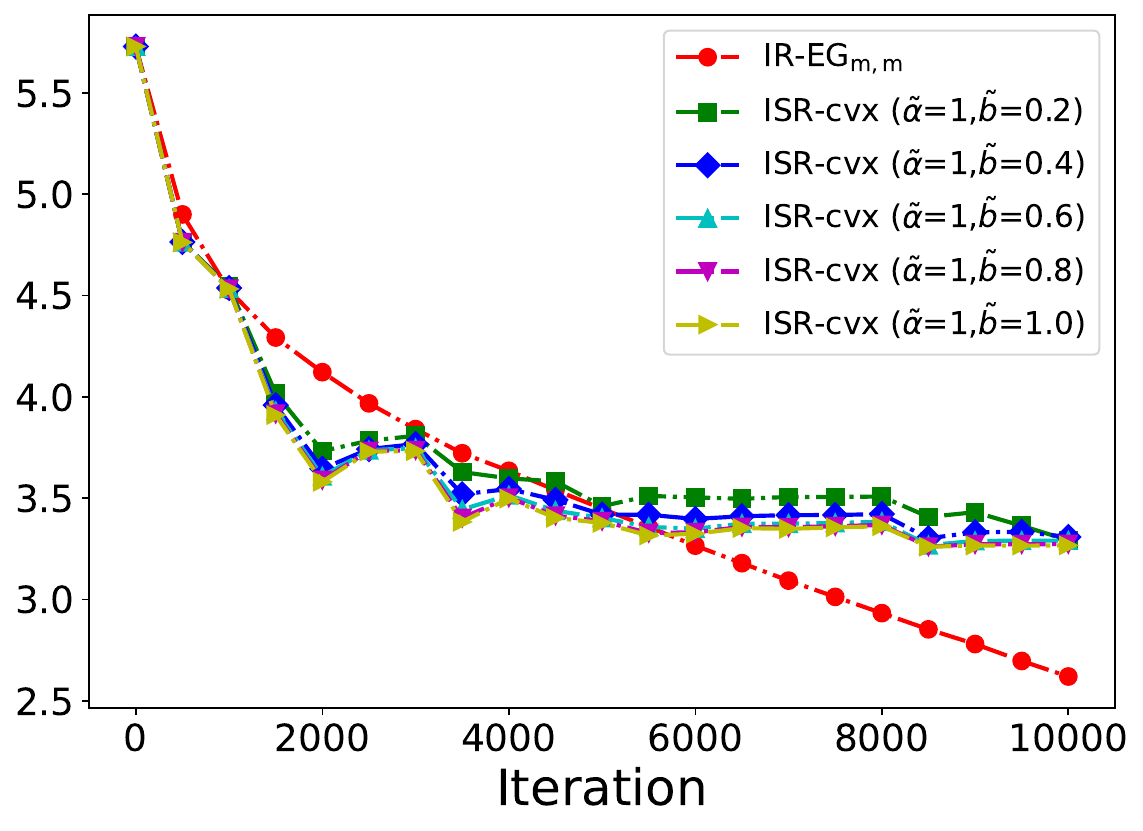}
			\end{minipage}
			&
			\begin{minipage}{.3\textwidth}
				\includegraphics[scale=.2, angle=0]{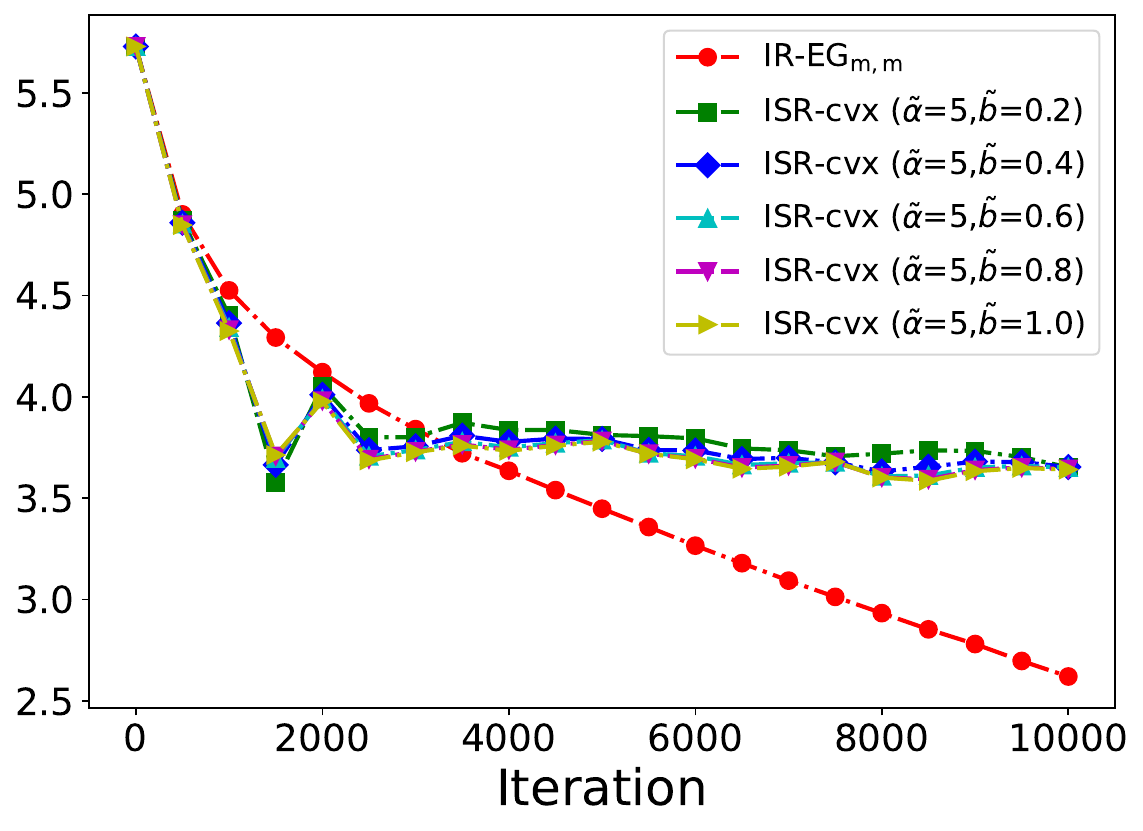}
			\end{minipage}
			&
			\begin{minipage}{.3\textwidth}
				\includegraphics[scale=.2, angle=0]{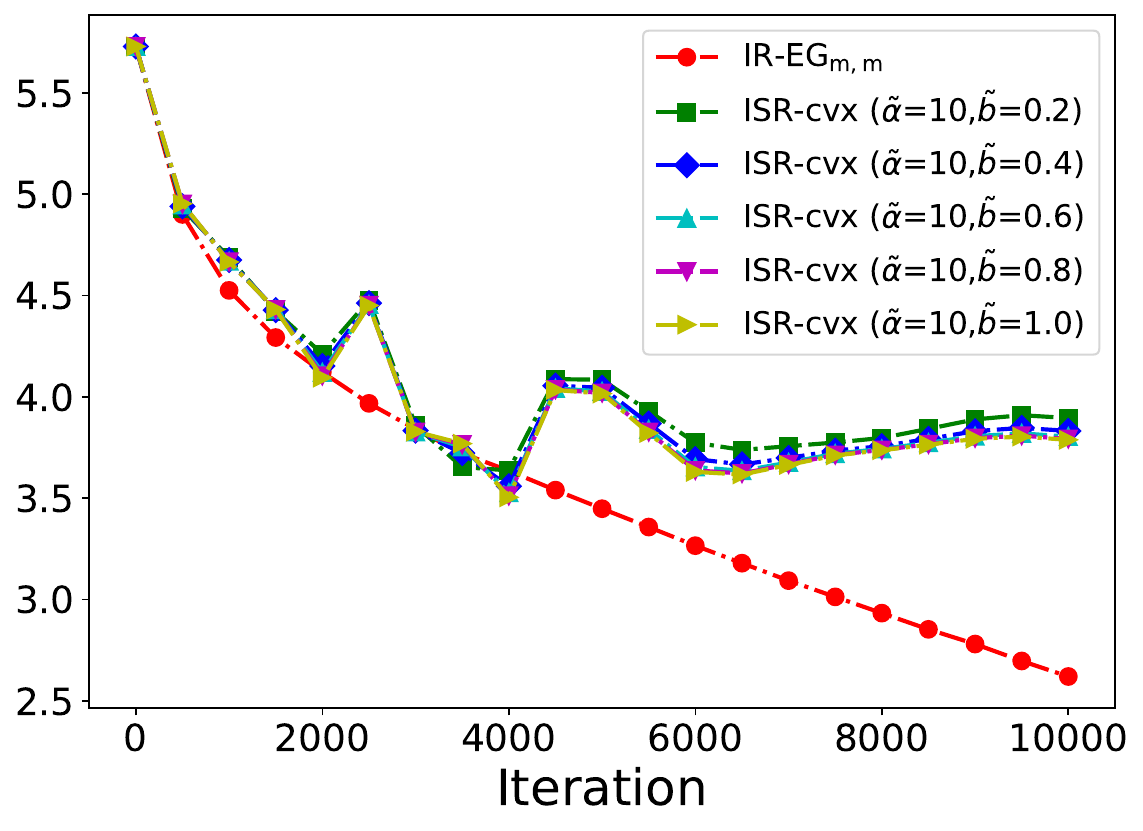}
			\end{minipage}
	\end{tabular}}
 \captionof{figure}{Algorithm~\ref{alg:IR-EG} vs. ISR-cvx for computing the best NE when $n_a=1.2$ in all arcs.}\label{fig:cvx-non-cap}
\end{table*}

\begin{table*}[t]
	\renewcommand\thetable{4}
	\setlength{\tabcolsep}{0pt}
	\centering 
		\begin{tabular}{c|  c  c  c c}
			\rotatebox[origin=c]{90}{{\scriptsize {suboptimality metric}}}
			&
			\begin{minipage}{.3\textwidth}
				\includegraphics[scale=.2, angle=0]{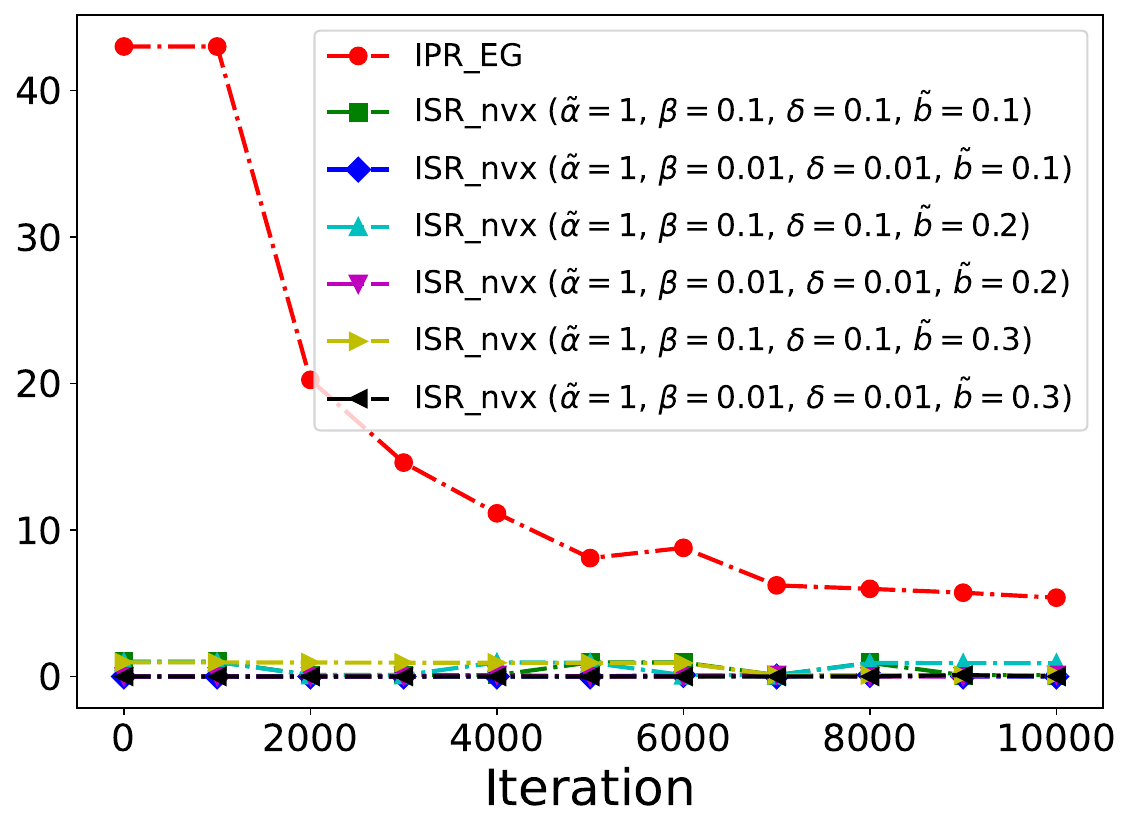}
			\end{minipage}
			&
			\begin{minipage}{.3\textwidth}
				\includegraphics[scale=.2, angle=0]{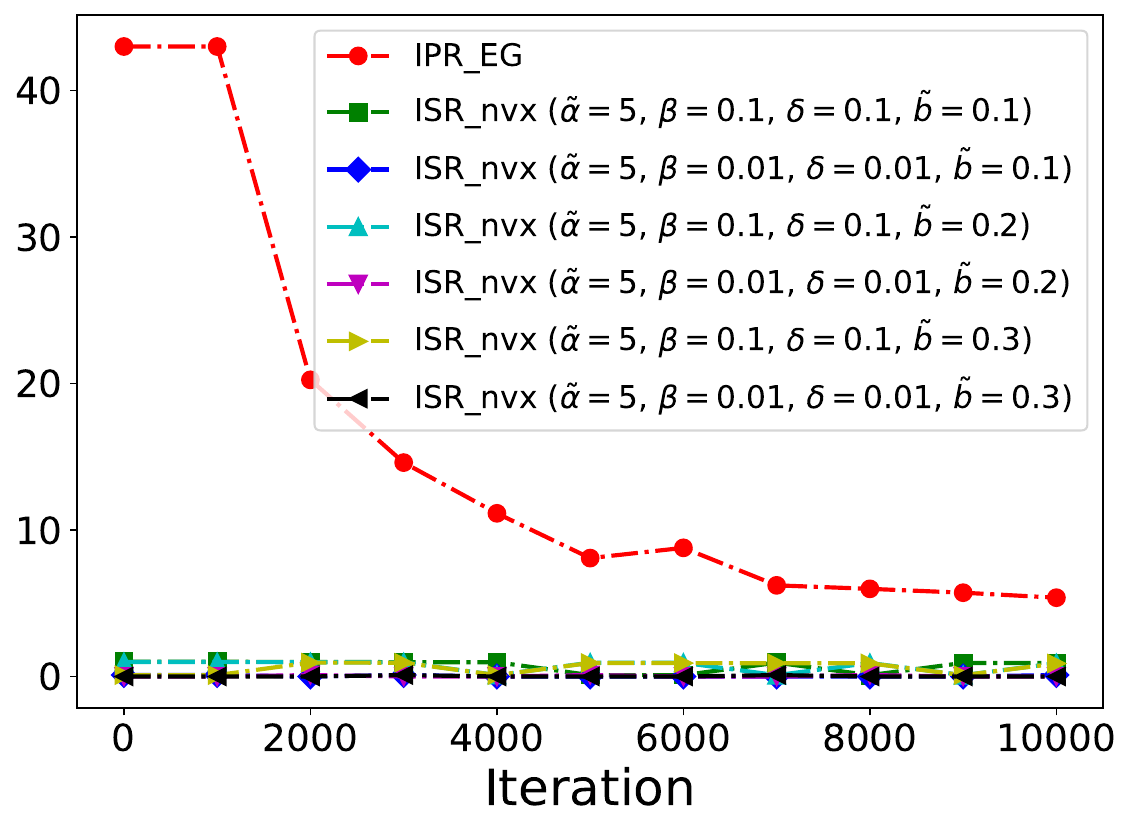}
			\end{minipage}
			&
			\begin{minipage}{.3\textwidth}
				\includegraphics[scale=.2, angle=0]{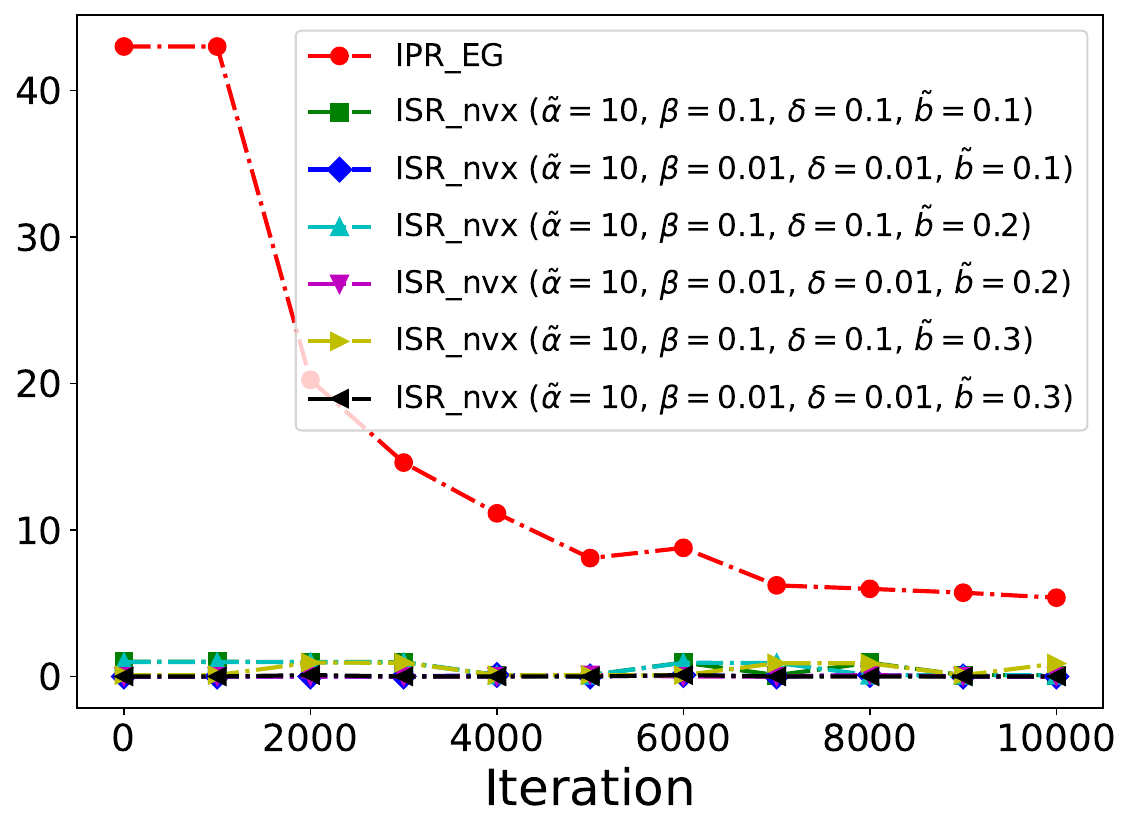}
			\end{minipage}
			\\
			\hline\\
			\rotatebox[origin=c]{90}{{\scriptsize{ $\log$(infeasibility metric)}}}
			&
			\begin{minipage}{.3\textwidth}
				\includegraphics[scale=.2, angle=0]{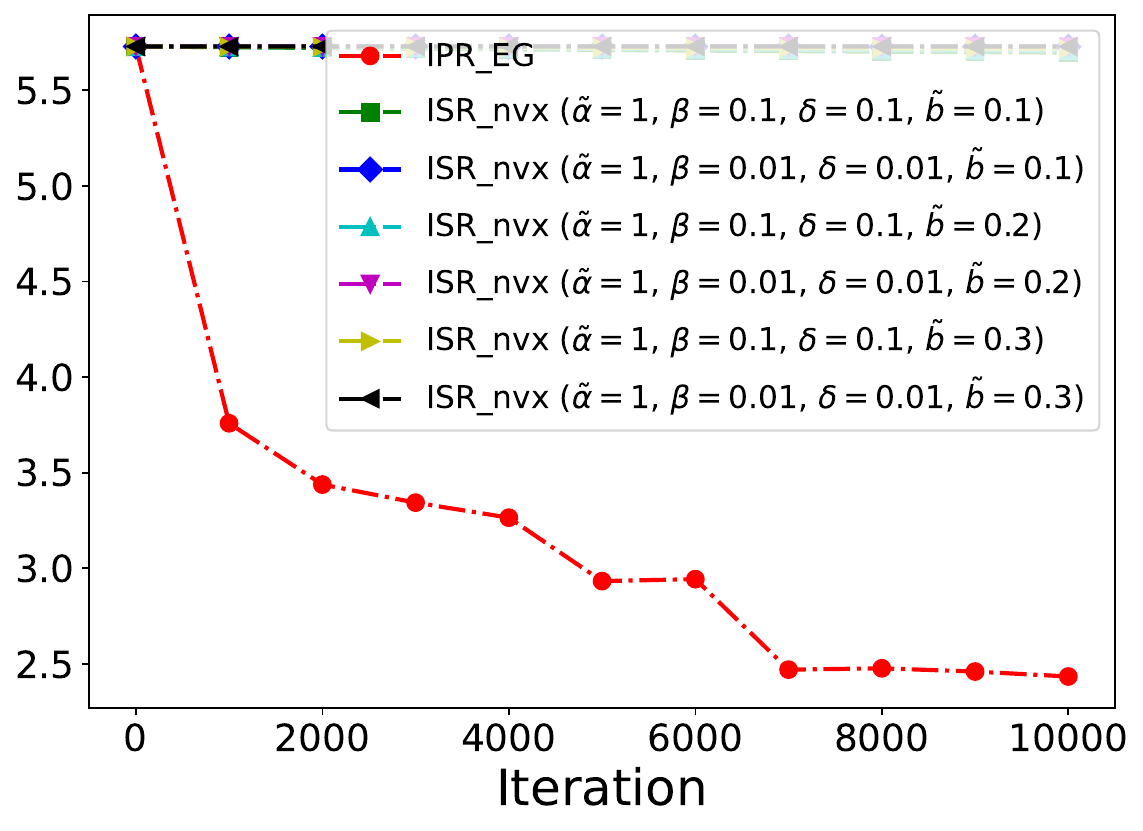}
			\end{minipage}
			&
			\begin{minipage}{.3\textwidth}
				\includegraphics[scale=.2, angle=0]{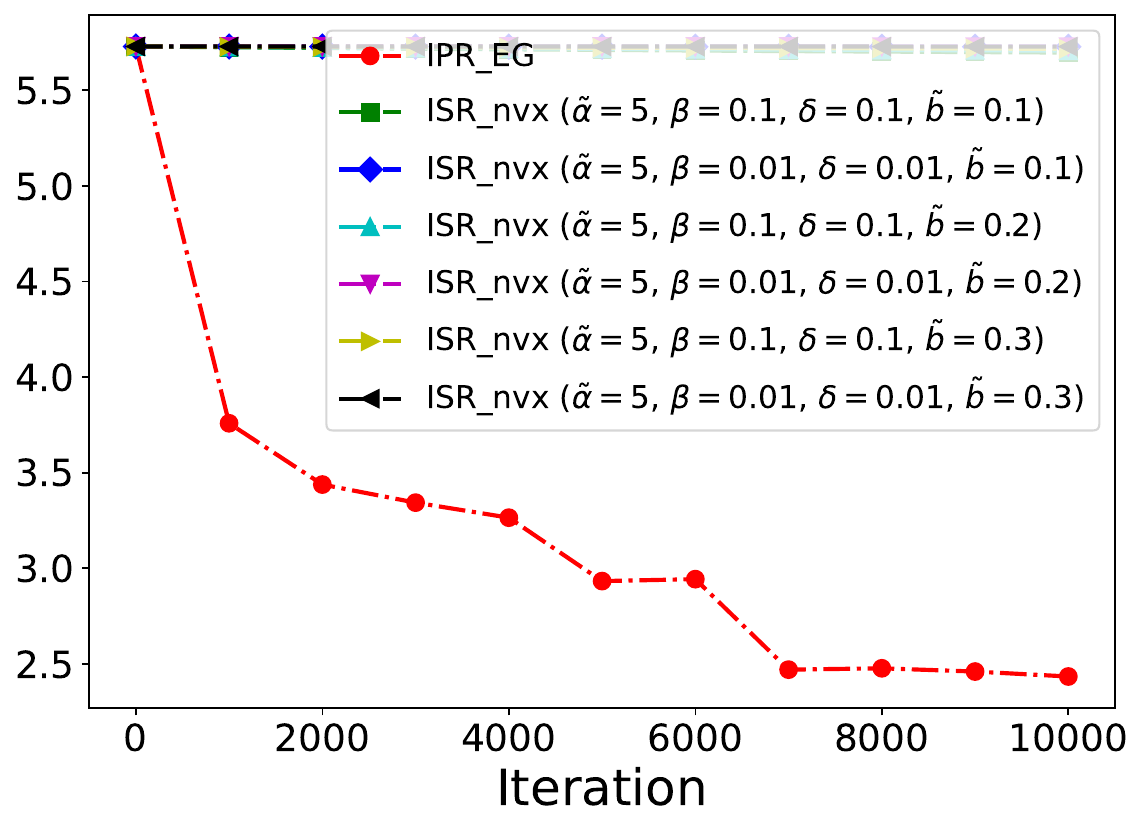}
			\end{minipage}
			&
			\begin{minipage}{.3\textwidth}
				\includegraphics[scale=.2, angle=0]{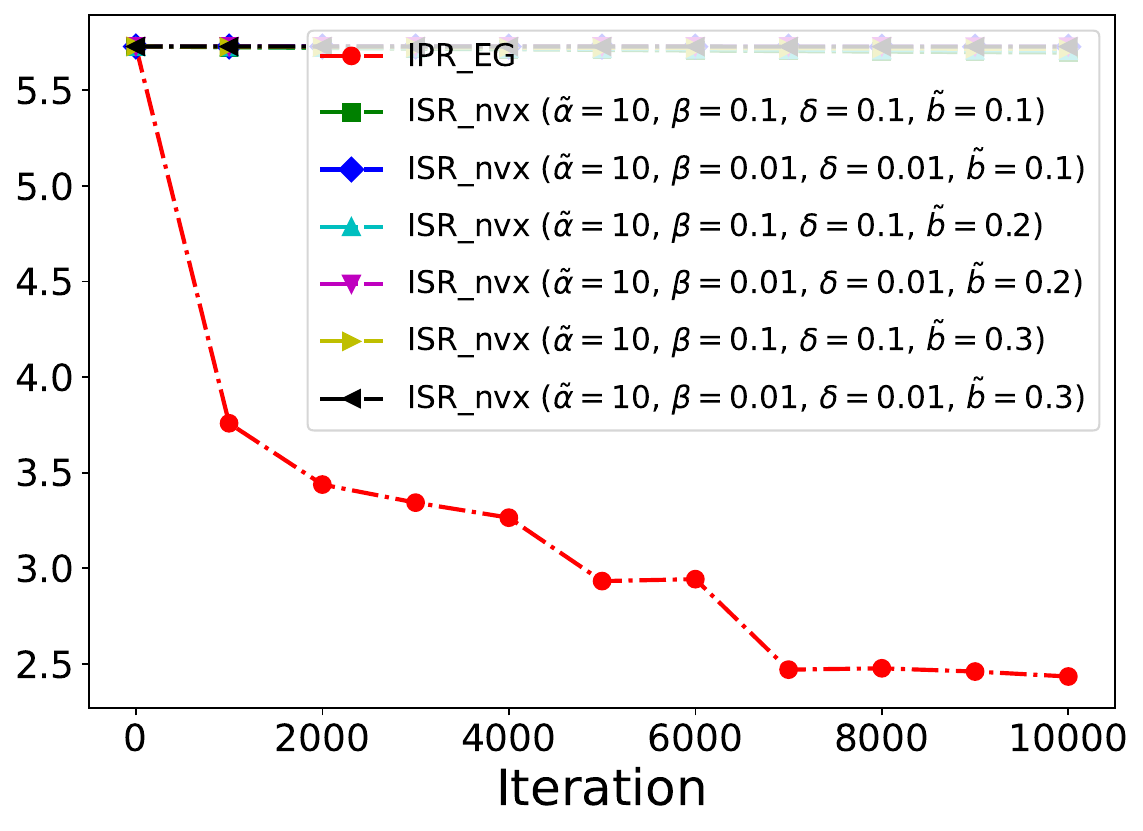}
			\end{minipage}
	\end{tabular}
 \captionof{figure}{Algorithm~\ref{alg:ncvx-IR-GD} vs. ISR-ncvx for computing the worst NE when $n_a=1.2$ in all arcs.}\label{fig:ncvx-non-cap}
\end{table*} 

\begin{table*}[t]
	\renewcommand\thetable{4}
	\setlength{\tabcolsep}{0pt}
	\centering
		\begin{tabular}{ c  c  c c}
			\begin{minipage}{.32\textwidth}
				\includegraphics[scale=.2, angle=0]{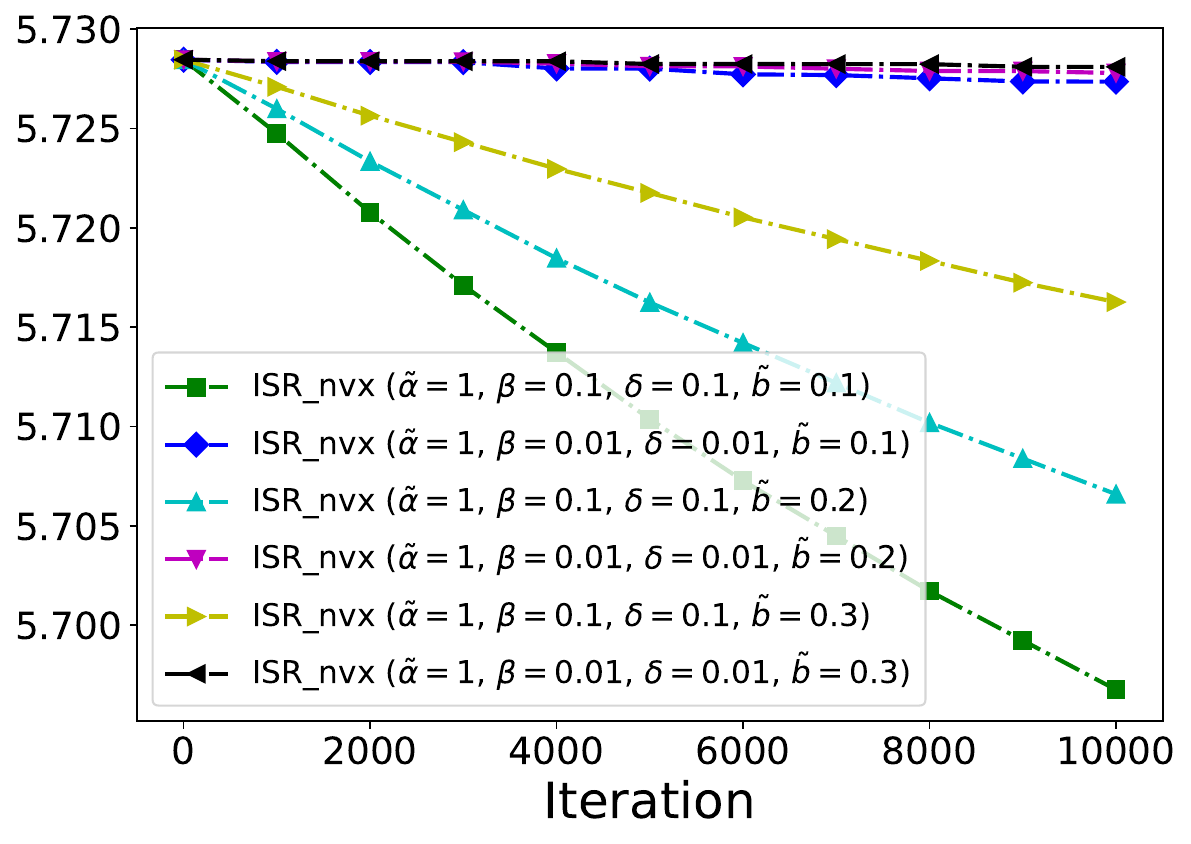}
			\end{minipage}
			&
			\begin{minipage}{.32\textwidth}
				\includegraphics[scale=.2, angle=0]{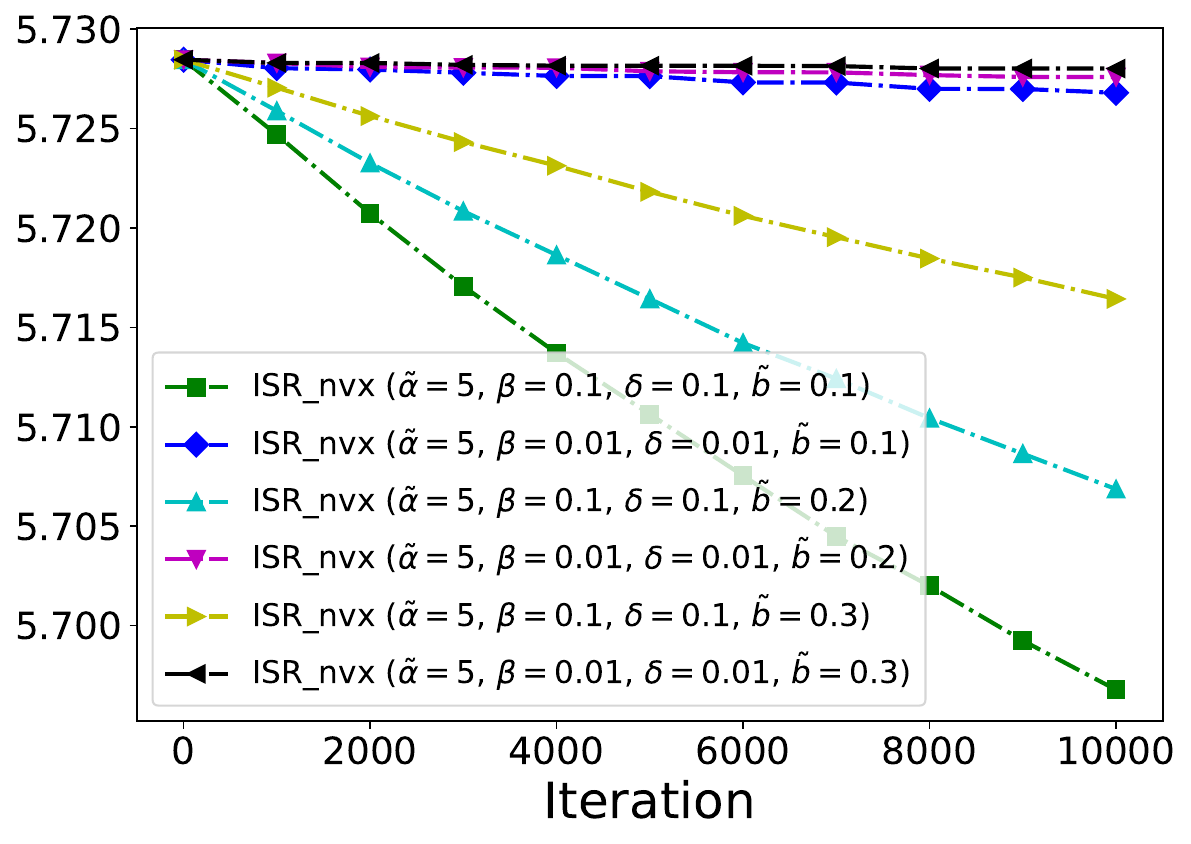}
			\end{minipage}
			&
			\begin{minipage}{.32\textwidth}
				\includegraphics[scale=.2, angle=0]{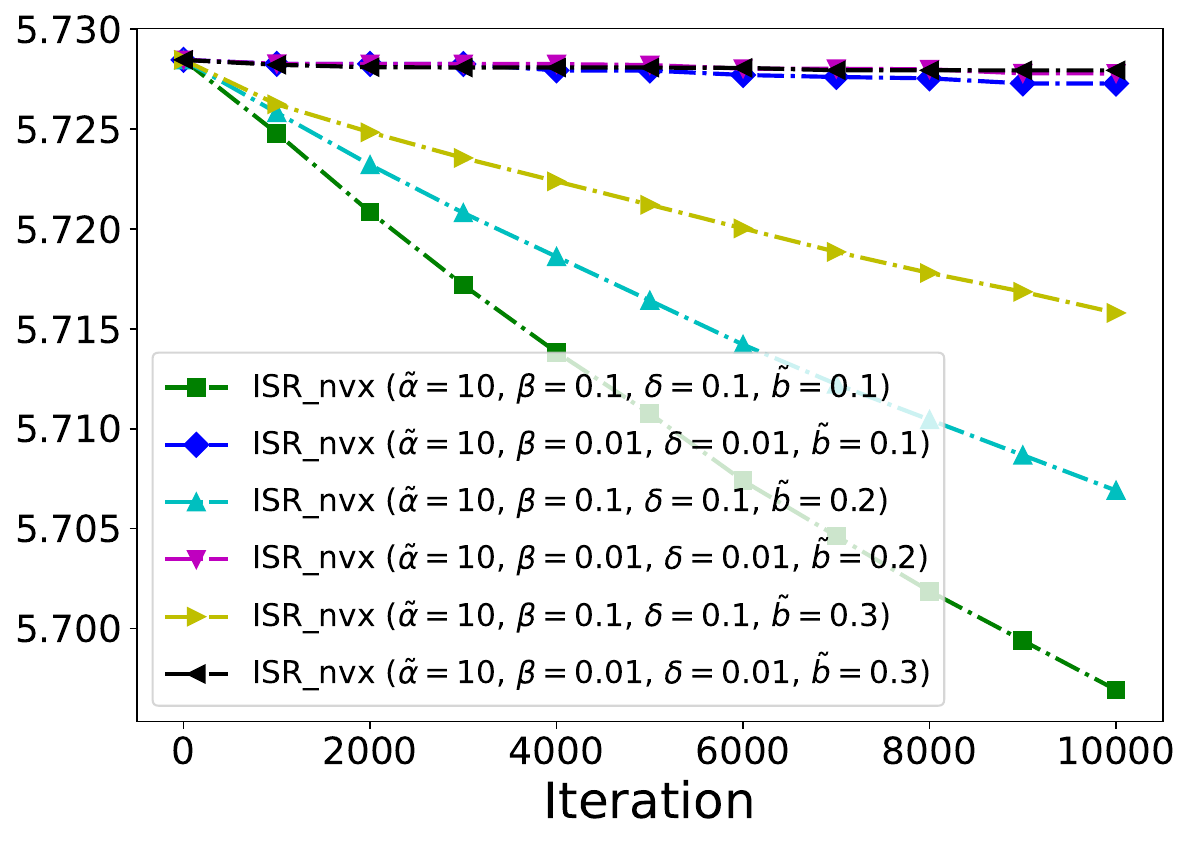}
			\end{minipage}
	\end{tabular} 
	\captionof{figure}{Detailed performance of ISR-ncvx in Fig.~\ref{fig:ncvx-non-cap} in terms of infeasibility error.}\label{fig:ncvx-non-cap-isr}
\end{table*}

}

\section{Concluding remarks}\label{sec:conclude}
In noncooperative game theory, understanding the quality of Nash equilibrium has been a storied area of research and has led to the emergence of popular metrics including the Price of Anarchy and the Price of Stability. The evaluation of these ratios is complicated by the need to compute the worst and the best equilibrium, respectively. In this paper, our goal is to devise a class of iteratively regularized extragradient methods with performance guarantees for computing the optimal equilibrium. To this end, we consider optimization problems with \far{monotone} variational inequality (VI) constraints when the objective function is either (i)  \ssrtwo{convex}, (ii) strongly convex, or (iii) nonconvex. In (i), we considerably improve the existing complexity guarantees. In  (ii) and (iii), we derive new complexity guarantees for solving this class of problems. Notably, this appears to be the first work where nonconvex optimization with monotone VI constraints is addressed with complexity guarantees. We further show that our results in (i) and (ii) can be generalized to address a class of bilevel VIs. Extensions of the results in this work to stochastic regimes are among interesting directions for future research.

\section{Acknowledgments}
\ssrtwo{We would like to thank the two anonymous referees for their constructive comments. We also thank Drs. Uday V. Shanbhag, Rasoul Etesami, and Shisheng Cui for the insightful feedback and suggestions about this work.}

%
%
%



\bibliographystyle{siamplain}
\bibliography{references}

\newpage
\appendix
\section{Supplementary results and proofs}
{\begin{lemma}\label{lem:example1_monotone_F}\em
Consider problem~\eqref{prob:opt_select} and let $X$ and $F$ be given by~\eqref{eqn:def_example1} where the set $Y\subseteq \mathbb{R}^p$ is closed and convex and functions $g$, $h_j$, $j=1,\ldots,q$ are continuously differentiable and convex. Then, the mapping $F$ is  \ssrtwo{monotone} on $X$. 
\end{lemma}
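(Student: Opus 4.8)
The plan is to verify the monotonicity inequality $(F(x)-F(x'))^\top(x-x')\ge 0$ directly from the definition, exploiting the saddle-point structure of $F$. I fix two arbitrary points $x=[y;\lambda]$ and $x'=[y';\lambda']$ in $X=Y\times\mathbb{R}^q_+$. Since $F(x)=[\nabla g(y)+\nabla h(y)\lambda;\,-h(y)]$, expanding the inner product and separating the $y$-block from the $\lambda$-block produces three groups of terms: a pure-gradient term $(\nabla g(y)-\nabla g(y'))^\top(y-y')$, a coupling term $(\nabla h(y)\lambda-\nabla h(y')\lambda')^\top(y-y')$ arising from the $y$-block, and the term $-(h(y)-h(y'))^\top(\lambda-\lambda')$ arising from the $\lambda$-block.

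The pure-gradient term is nonnegative because $g$ is convex, so $\nabla g$ is a monotone map. For the remaining two groups I collect terms index-by-index over $j=1,\ldots,q$. Writing $\nabla h(y)\lambda=\sum_j \lambda_j\nabla h_j(y)$ and $(h(y)-h(y'))^\top(\lambda-\lambda')=\sum_j (h_j(y)-h_j(y'))(\lambda_j-\lambda'_j)$, the combined contribution of index $j$ rearranges into
\[
\lambda_j\big[\nabla h_j(y)^\top(y-y')-h_j(y)+h_j(y')\big]+\lambda'_j\big[-\nabla h_j(y')^\top(y-y')+h_j(y)-h_j(y')\big].
\]
Each bracket is exactly a first-order (gradient) inequality for the convex function $h_j$: the first bracket equals $h_j(y')-\big(h_j(y)+\nabla h_j(y)^\top(y'-y)\big)\ge 0$, and the second equals $h_j(y)-\big(h_j(y')+\nabla h_j(y')^\top(y-y')\big)\ge 0$. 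Since $x,x'\in X$ forces $\lambda_j\ge 0$ and $\lambda'_j\ge 0$, every summand is nonnegative; adding back the pure-gradient term then yields the claim.

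The main obstacle, and really the only subtle point, is that convexity of the $h_j$ alone is insufficient: the coupling terms are controlled only after they are grouped so that the multiplier coefficients $\lambda_j,\lambda'_j$ appear explicitly, at which stage their nonnegativity (guaranteed by the constraint set $X=Y\times\mathbb{R}^q_+$) is precisely what makes each index-$j$ contribution nonnegative. This also clarifies why the nonnegative orthant must be built into $X$: it is exactly the region on which $\mathcal{L}(\cdot,\lambda)$ stays convex in $y$, since $\sum_j\lambda_j h_j$ is convex only when $\lambda\ge 0$, so that $F$ is the monotone saddle operator of a convex--concave function. A more conceptual alternative would be to invoke the general fact that the operator $[\nabla_y\mathcal{L};-\nabla_\lambda\mathcal{L}]$ of a function convex in $y$ and concave in $\lambda$ is monotone; the direct computation above is essentially the specialization of that fact to the present setting, where $\mathcal{L}$ is affine in $\lambda$.
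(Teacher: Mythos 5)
Your proof is correct, and while it follows the same overall strategy as the paper (direct verification of $(F(x)-F(x'))^\top(x-x')\ge 0$ using the first-order convexity inequalities for $g$ and the $h_j$ together with nonnegativity of the multipliers), the grouping of the coupling terms is genuinely different and cleaner. The paper factors the index-$j$ contribution as $(\lambda_{1,j}-\lambda_{2,j})$ times a Bregman-type quantity, which forces a case split into the index sets $\mathcal{J}_+=\{j:\lambda_{1,j}\ge\lambda_{2,j}\}$ and $\mathcal{J}_-=\{j:\lambda_{1,j}<\lambda_{2,j}\}$ and an add-and-subtract of cross terms, because the sign of the Bregman quantity depends on whether the gradient is evaluated at $y_1$ or $y_2$. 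Your symmetric regrouping into $\lambda_j\,D_{h_j}(y',y)+\lambda'_j\,D_{h_j}(y,y')$, where each $D_{h_j}$ is an individually nonnegative Bregman divergence and each multiplier is individually nonnegative, dispenses with the case split entirely and makes the role of the constraint $\lambda\ge 0$ more transparent. Your closing remark that this is just the specialization of the monotonicity of the saddle operator $[\nabla_y\mathcal{L};-\nabla_\lambda\mathcal{L}]$ of a convex--concave Lagrangian is also apt and gives the conceptual reason the computation must work; the paper does not make this connection explicit.
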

\begin{proof}
Suppose $x_1,x_2 \in X$ are arbitrary given vectors. From \eqref{eqn:def_example1}, we can write 
\begin{align*}
 (F(x_1)-F(x_2))\fyy{^\top}(x_1-x_2) &=  (\nabla g(y_1) -\nabla g(y_2) )\fyy{^\top}(y_1-y_2)\\
 &+ \sum_{j=1}^q(\nabla h_j(y_1)\lambda_{1,j}-\nabla h_j(y_2)\lambda_{2,j})\fyy{^\top}(y_1-y_2)\\
 & +\sum_{j=1}^q(h_j(y_2)-h_j(y_1))(\lambda_{1,j}-\lambda_{2,j}),
\end{align*} 
where $\lambda_{1,j}$ and $\lambda_{2,j}$ denote the $j$th dual variable in $x_1$ and $x_2$, respectively. Let us define $\mathcal{J}_{+} \triangleq \{j \in [q]\mid \lambda_{1,j}-\lambda_{2,j} \geq 0\}$ and $\mathcal{J}_{-} \triangleq \{j \in [q]\mid \lambda_{1,j}-\lambda_{2,j} < 0\}$. Then, adding and subtracting the two terms $\sum_{j \in \mathcal{J}_{+}}\nabla h_j(y_1)\lambda_{2,j} $ and  $\sum_{j \in \mathcal{J}_{-}}\nabla h_j(y_2)\lambda_{1,j} $, we obtain
\begin{align*}
& (F(x_1)-F(x_2))\fyy{^\top}(x_1-x_2) =  (\nabla g(y_1) -\nabla g(y_2) )\fyy{^\top}(y_1-y_2)\\
 &+ \sum_{j \in \mathcal{J}_{+}}^q(\nabla h_j(y_1)\lambda_{1,j}-\nabla h_j(y_1)\lambda_{2,j}+\nabla h_j(y_1)\lambda_{2,j}-\nabla h_j(y_2)\lambda_{2,j})\fyy{^\top}(y_1-y_2)\\
  &+ \sum_{j \in \mathcal{J}_{-}}^q(\nabla h_j(y_1)\lambda_{1,j}-\nabla h_j(y_2)\lambda_{1,j}+\nabla h_j(y_2)\lambda_{1,j}-\nabla h_j(y_2)\lambda_{2,j})\fyy{^\top}(y_1-y_2)\\
 & +\sum_{j=1}^q(h_j(y_2)-h_j(y_1))(\lambda_{1,j}-\lambda_{2,j}).
\end{align*}
Rearranging the terms, we obtain 
\begin{align*}
& (F(x_1)-F(x_2))\fyy{^\top}(x_1-x_2) =  (\nabla g(y_1) -\nabla g(y_2) )\fyy{^\top}(y_1-y_2)\\
 &+ \sum_{j \in \mathcal{J}_{+}}^q(\lambda_{1,j}-\lambda_{2,j})\nabla h_j(y_1)\fyy{^\top}(y_1-y_2)+ \sum_{j \in \mathcal{J}_{+}}^q\lambda_{2,j}(\nabla h_j(y_1)-\nabla h_j(y_2))\fyy{^\top}(y_1-y_2)\\
  &+ \sum_{j \in \mathcal{J}_{-}}^q\lambda_{1,j}(\nabla h_j(y_1)-\nabla h_j(y_2))\fyy{^\top}(y_1-y_2)+\sum_{j \in \mathcal{J}_{-}}^q(\lambda_{1,j}-\lambda_{2,j})\nabla h_j(y_2)\fyy{^\top}(y_1-y_2)\\
 & +\sum_{j=1}^q(h_j(y_2)-h_j(y_1))(\lambda_{1,j}-\lambda_{2,j}).
\end{align*}
From the nonnegativity of the Lagrange multipliers and convexity of functions $g$ and $h_j$, we have 
\begin{align*}
& (F(x_1)-F(x_2))\fyy{^\top}(x_1-x_2) \geq  \\
 &+ \sum_{j \in \mathcal{J}_{+}}^q(\lambda_{1,j}-\lambda_{2,j})\left(h_j(y_2)-h_j(y_1)+\nabla h_j(y_1)\fyy{^\top}(y_1-y_2)\right)\\
  &+\sum_{j \in \mathcal{J}_{-}}^q(\lambda_{1,j}-\lambda_{2,j})\left(h_j(y_2)-h_j(y_1)+\nabla h_j(y_2)\fyy{^\top}(y_1-y_2)\right).
\end{align*}
In view of convexity of $h_j$, we have that $h_j(y_2)-h_j(y_1)+\nabla h_j(y_1)\fyy{^\top}(y_1-y_2) \geq 0$ and $h_j(y_2)-h_j(y_1)+\nabla h_j(y_2)\fyy{^\top}(y_1-y_2) \leq 0$.  Invoking the definitions of $\mathcal{J}_{+}$  and $\mathcal{J}_{-} $, we have $(F(x_1)-F(x_2))\fyy{^\top}(x_1-x_2) \geq 0$ and thus, $F$ is  \ssrtwo{monotone} on $X$. 
\end{proof}}
%

\subsection{Proof of Lemma~\ref{lem:bilevelVI_gap_ws}}\label{sec:proof_lemma_ws}
\noindent (i) \far{Let $x \in X$ be a given vector.} From the definition of the dual gap function, for any $\hat{x} \in \mbox{SOL}(X,F)$, we have
\begin{align}\label{eqn:hatx_monotoneH} 
H(\hat{x} )\fyy{^\top}\left(x-\hat{x} \right) \leq  \sup_{ y\in  {\tiny \mbox{SOL}(X,F)}} H(y)\fyy{^\top}(x-y)= \mbox{Gap}\left(x,\mbox{SOL}(X,F),H\right)   .
\end{align}
Invoking the Cauchy-Schwarz inequality, from the preceding inequality we may write $
 -B_H \|x-\hat{x} \| \leq \mbox{Gap}\left(x,\mbox{SOL}(X,F),H\right)$. Now, choosing $\hat{x} :=\Pi_{\tiny \mbox{SOL}(X,F)}(x)$, we obtain
\begin{align*}
 -B_H \ \mbox{dist}\left(x,\mbox{SOL}(X,F)\right) \leq \mbox{Gap}\left(x,\mbox{SOL}(X,F),H\right)       .
\end{align*}
 
\noindent (ii) We show the result for both \far{convex} and strongly convex cases by letting $\mu \geq 0$. Let us define $\hat{x}\triangleq \Pi_{\tiny\mbox{SOL}(X,F)} (x)$. Let $x^*$ denote an optimal solution to problem~\eqref{eqn:optVI}. Since $\hat{x} \in \mbox{SOL}(X,F)$, in view of the optimality condition, we have $\nabla f(x^*)\fyy{^\top}(\hat{x}-x^*) \geq 0$. From (strong) convexity of $f$ and the Cauchy-Schwarz inequality, we may write 
\begin{align*}
f(x) - f(x^*) & \geq \nabla f(x^*)\fyy{^\top}(x-x^*) +\frac{\mu}{2}\|x-x^*\|^2
\\
&=\nabla f(x^*)\fyy{^\top}(x-\hat{x})+\nabla f(x^*)\fyy{^\top}(\hat{x}-x^*)+\frac{\mu}{2}\|x-x^*\|^2\\ 
&\geq -\|\nabla f(x^*)\|\, \mbox{dist}(x,\mbox{SOL}(X,F))+\frac{\mu}{2}\|x-x^*\|^2.
\end{align*}

\noindent (iii) Let $x \in X$ be given. Let $x^*_F \in \mbox{SOL}(X,F)$ be an arbitrary vector. Then, from Definition~\ref{def:weaksharp}, we have \ses{$F(x^*_F)\fyy{^\top}(x- x^*_F) \geq \alpha\,  \mbox{dist}^ \mathcal{M}\left(x,\mbox{SOL}(X,F)\right)$}. Invoking the definition of the dual gap function and the preceding relation, we obtain  
\begin{align*}\mbox{Gap}\left(x,X,F\right) = \sup_{ y\in  {X}} F(y)\fyy{^\top}(x-y) \geq F(x^*_F)\fyy{^\top}(x- x^*_F) \geq \ses{\alpha\,  \mbox{dist}^ \mathcal{M}\left(x,\mbox{SOL}(X,F)\right)}.
\end{align*} 
\subsection {Proof of Lemma~\ref{lem:monotone_lemma_ineq}}\label{sec:proof_lemma_ineq}
(i) Let $x \in X$ be an arbitrary vector. For $k \geq 0$, we have
\begin{align*}
 \|x_{k+1} - x\|^2 &= \|x_{k+1} -x_k + x_k - x\|^2 \\
  & =\|x_{k+1} - x_k\|^2 + \|x_k - x\|^2 + 2(x_{k+1} - x_k)\fyy{^\top}(x_k -x_{k+1} + x_{k+1} - x)  \\
 & =\|x_k - x\|^2 - \|x_{k+1} - x_k\|^2 + 2(x_{k+1} - x_k)\fyy{^\top}(x_{k+1} - x).
\end{align*}
Also, $\|x_{k+1} - x_k\|^2  =
\|x_{k+1} - y_{k+1}\|^2 + \|y_{k+1} - x_k\|^2 + 2(x_{k+1} - y_{k+1})\fyy{^\top}(y_{k+1} - x_k)$. From the preceding relations, we may write 
\begin{align}\label{eqn:ES_main1}
 \|x_{k+1} - x\|^2 & =\|x_k - x\|^2 - \|x_{k+1} - y_{k+1}\|^2 - \|y_{k+1} - x_k\|^2\nonumber\\
 & + 2(y_{k+1}-x_{k} )\fyy{^\top}( y_{k+1}-x_{k+1}) + 2(x_{k+1} - x_k)\fyy{^\top}(x_{k+1} - x).
\end{align}
Next, we invoke ~Lemma~\ref{lem:Projection theorm} for finding upper bounds on $(x_{k+1} - y_{k+1})\fyy{^\top}( x_k-y_{k+1} )$ and $(x_{k+1} - x_k)\fyy{^\top}(x_{k+1} - x)$. Recall that from the projection theorem, for all vectors $u\in\mathbb{R}^n$ and $\hat{x} \in X$ we have $\left(\Pi_X[u]-u\right)\fyy{^\top}\left(\hat{x}-\Pi_X[u]\right) \geq 0$. By substituting $u:=x_k-{\gamma}(F(y_{k+1})+\eta_kH(y_{k+1}))$, $\hat{x}:=x$, and noting that $x_{k+1}=\Pi_X[u]$, we obtain
$\left(x_{k+1}-x_k+{\gamma}(F(y_{k+1})+\eta_kH(y_{k+1}))\right)\fyy{^\top}\left(x-x_{k+1}\right) \geq 0.$ This implies that
\begin{align}\label{eqn:ES_main2}
(x_{k+1} - x_k)\fyy{^\top}(x_{k+1} - x)\leq {\gamma}\left(F(y_{k+1})+\eta_kH(y_{k+1})\right)\fyy{^\top}\left(x-x_{k+1}\right).
\end{align}
Let us invoke the projection theorem again by substituting $u:=x_k-{\gamma}(F(x_k)+\eta_kH(x_k))$, $\hat{x}:=x_{k+1}$, and noting that $y_{k+1}=\Pi_X[u]$. We obtain
\begin{align}\label{eqn:ES_main3}
(y_{k+1}-x_{k} )\fyy{^\top}( y_{k+1}-x_{k+1})\leq {\gamma}\left(F(x_k)+\eta_kH(x_k)\right)\fyy{^\top}\left(x_{k+1}-y_{k+1}\right).
\end{align}
From the equations~\eqref{eqn:ES_main1}, \eqref{eqn:ES_main2}, and \eqref{eqn:ES_main3}, we obtain 
\begin{align*}
 \|x_{k+1} - x\|^2 & \leq\|x_k - x\|^2 - \|x_{k+1} - y_{k+1}\|^2 - \|y_{k+1} - x_k\|^2\nonumber\\
 &+2{\gamma}\left(F(x_k)+\eta_kH(x_k)\right)\fyy{^\top}\left(x_{k+1}-y_{k+1}\right)\\
 & + 2{\gamma}\left(F(y_{k+1})+\eta_kH(y_{k+1})\right)\fyy{^\top}\left(x-x_{k+1}\right).
\end{align*}
Adding and subtracting $y_{k+1}$ inside $\left(x-x_{k+1}\right)$ in the preceding relation, we obtain
\begin{align*}
 \|x_{k+1} - x\|^2 & \leq\|x_k - x\|^2 - \|x_{k+1} - y_{k+1}\|^2 - \|y_{k+1} - x_k\|^2\nonumber\\
 &+2{\gamma}\left(F(x_k)+\eta_kH(x_k)-F(y_{k+1})-\eta_kH(y_{k+1})\right)\fyy{^\top}\left(x_{k+1}-y_{k+1}\right)\\
 & + 2{\gamma}\left(F(y_{k+1})+\eta_kH(y_{k+1})\right)\fyy{^\top}\left(x-y_{k+1}\right).
\end{align*}
Recall that for any $a,b \in \mathbb{R}^n$, $-\|a\|^2+2a\fyy{^\top}b \leq \|b\|^2$. Utilizing this relation, we obtain 
\begin{align*} 
 \|x_{k+1} - x\|^2 & \leq\|x_k - x\|^2   - \|y_{k+1} - x_k\|^2\nonumber\\
 &+ {\gamma}^2\left\|F(x_k)+\eta_kH(x_k)-F(y_{k+1})-\eta_kH(y_{k+1})\right\|^2 \\
 & + 2{\gamma}\left(F(y_{k+1})+\eta_kH(y_{k+1})\right)\fyy{^\top}\left(x-y_{k+1}\right).
\end{align*}
Invoking the Lipschitzian property of $F$ and $H$, we obtain \far{\eqref{eqn:EG_lemma_mm_last}.}

\noindent \far{(ii) Consider equation  \eqref{eqn:EG_lemma_mm_last}.} From ${\gamma}^2(L_F^2+\eta_k^2L_H^2) \leq 0.5$ and invoking the monotonicity of the mappings $F$ and $H$, we obtain $ 
{\gamma}\left(F(x)+\eta_kH(x)\right)\fyy{^\top}\left(y_{k+1}-x\right)    \leq 0.5\|x_k - x\|^2   -0.5\|x_{k+1} - x\|^2 .$
This completes the proof for part \far{(ii).}

\noindent \far{(iii)}  From the convexity of $f$, we have $ \nabla f(y_{k+1})\fyy{^\top}(x-y_{k+1}) \leq f(x) - f(y_{k+1}). $ Consider~\eqref{eqn:EG_lemma_mm_last} where $H(\bullet):=\nabla f(\bullet)$. Invoking \far{the preceding inequality} and the monotonicity of $F$, from \eqref{eqn:EG_lemma_mm_last} we obtain 
\begin{align*} 
 \|x_{k+1} - x\|^2 & \leq\|x_k - x\|^2   - \|y_{k+1} - x_k\|^2+ 2{\gamma}^2(L_F^2+\eta_k^2L_H^2)\|y_{k+1} - x_k\|^2 \\
 & + 2{\gamma} F(x)\fyy{^\top}\left(x-y_{k+1}\right)+2\gamma\eta_k (f(x) - f(y_{k+1})).\notag
\end{align*}
Rearranging the terms, we obtain the inequality in part \far{(iii).}
\subsection{Proof of Theorem~\ref{thm:bilevelVI}}\label{sec_app_thm:bilevelVI}
 \far{{\bf [Case 1]}} Consider the bounds in Theorem~\ref{Thm:asymptotic-m.m} (i) and (ii). Note that (1-i) and (1-ii) follow by $\eta_{K-1}=\frac{\eta_0}{{K}^b}$ and also that $\textstyle\sum_{k=0}^{K-1} \eta_k =\eta_0\textstyle\sum_{k=0}^{K-1} (k+1)^{-b} \leq \frac{\eta_0 K^{1-b}}{1-b}$, for $K\geq 2^{1/(1-b)},$ where the last inequality follows from \cite[Lemma~2.14]{doi:10.1137/20M1357378}. (1-iii) is implied from Lemma~ \ref{lem:bilevelVI_gap_ws} and (1-ii). 

\noindent \far{{\bf [Case 2]} To show (2-i) consider an arbitrary solution $x^* \in X^*$ where $X^*$ denotes the solution set of problem~\eqref{eqn:bilevelVI}. Consider \eqref{ineq:EG_lemma_merelymonotone}. Substituting $x:=x^* $, for a constant regularization parameter $\eta$, we have
\begin{align*} 
2{\gamma}\left(F(x^* )+\eta H(x^* )\right)\fyy{^\top}\left(y_{k+1}-x^* \right)   & \leq  \|x_k - x^* \|^2   -\|x_{k+1} - x^* \|^2 .
\end{align*}
Summing both sides over $k=0,\ldots,K-1$ and divining both sides by $K$, we obtain 
\begin{align}\label{eqn:FHx^*K} 
2{\gamma}\left(F(x^* )+\eta H(x^* )\right)\fyy{^\top}\left(\bar{y}_K-x^* \right)   & \leq  (\|x_0 - x^* \|^2   -\|x_{K} - x^* \|^2)/K .
\end{align}
In view of $x^* \in X^*_F$ and Definition~\ref{def:weaksharp}, we have $F(x^*)\fyy{^\top}(\bar{y}_K-x^*) \geq \alpha \mbox{dist}(\bar{y}_K ,X^*_F)$. Invoking the Cauchy-Schwarz inequality, we have
\begin{align*} 
H(x^* ) \fyy{^\top}\left(\bar{y}_K-x^* \right)   & =   H(x^* ) \fyy{^\top}\left(\bar{y}_K- \Pi_{X^*_F}[\bar{y}_K]+   \Pi_{X^*_F}[\bar{y}_K] -x^* \right)\\
& \geq  -\|H(x^*)\|\|\bar{y}_K- \Pi_{X^*_F}[\bar{y}_K]\|,
\end{align*}
where we used $ H(x^* ) \fyy{^\top}\left(   \Pi_{X^*_F}[\bar{y}_K] -x^* \right) \geq 0$ in view of $ \Pi_{X^*_F}[\bar{y}_K] \in X^*_F$ and that $x^*$ solves $\mbox{VI}(X^*_F,H)$.  Thus, from \eqref{eqn:FHx^*K}, we obtain 
\begin{align*} 
2{\gamma} \left( \alpha \mbox{dist}(\bar{y}_K ,X^*_F)- \eta\|H(x^*)\|\mbox{dist}(\bar{y}_K ,X^*_F)\right)   & \leq  (\|x_0 -x^*\|^2   -\|x_{K} - x^*\|^2)/K .
\end{align*}
This implies that $
2{\gamma}  (\alpha- \eta \|H(x^*)\|) \mbox{dist}(\bar{y}_K ,X^*_F)     \leq  \|x_0 - x^*\|^2 /K.$  
Recalling $\eta \leq \frac{\alpha}{2\|H(x^*)\|}$, the preceding relation implies that $\mbox{dist}(\bar{y}_K,X^*_F) \leq \frac{\|x_0-x^*\|^2}{{\gamma}\alpha K}$. 
To show  (2-ii), by invoking Lemma~\ref{lem:bilevelVI_gap_ws},  $ \mbox{Gap}\left(\bar{y}_K, X^*_F,H\right)   \geq - B_H\, \frac{\|x_0-x^*\|^2}{ {\gamma} \alpha K}$.  Combining this with the bound in part (1-i) for $b=0$, we obtain the result. }
\subsection{Proof of Corollary~\ref{cor:nested}}\label{sec:proof_lemma_cor:nested}
\far{
The proof can be done by invoking~\eqref{ineq:EG_lemma_merelymonotone2} and following similar steps to those in the proof of Theorem~\ref{thm:bilevelVI}. Hence, it is omitted.
}
\subsection{Proof of Lemma~\ref{lem:s_monotone_lemma_ineq}}\label{sec:proof_lemma_lem:s_monotone_lemma_ineq}
\noindent (i) From the monotonicity of $F$ and the strong monotonicity of $H$, we have
\begin{align*}
2{\gamma}\left(F(y_{k+1})+ {{\far{\eta_k}}}H(y_{k+1})\right)\fyy{^\top}\left(  x-y_{k+1}  \right) &\leq 2{\gamma}\left(F(x)+ {{\far{\eta_k}}}H(x)\right)\fyy{^\top}\left( x -y_{k+1}\right) \\
&- 2{\gamma}\far{\eta_k}\mu_H\|y_{k+1}-x\|^2.
\end{align*}
We can also write $-2\|y_{k+1}-x\|^2 \leq -\|x_k-x\|^2+ 2\|x_k-y_{k+1}\|^2.$ From \far{equation}~\eqref{eqn:EG_lemma_mm_last} and the preceding two relations, we obtain 
\begin{align*} 
2{\gamma}\left(F(x)+\far{\eta_k} H(x)\right)\fyy{^\top}\left(y_{k+1}-x\right)   & \leq (1-{\gamma}\far{\eta_k}\mu_H)\|x_k - x\|^2   -\|x_{k+1} - x\|^2 \\
&-(1-2{\gamma}\far{\eta_k}\mu_H-2{\gamma}^2(L_F^2+\far{\eta_k}^2L_H^2)) \|x_k-y_{k+1}\|^2.
\end{align*}
From the assumptions, $1-2{\gamma}\far{\eta_k}\mu_H-2{\gamma}^2(L_F^2+\far{\eta_k}^2L_H^2)\geq 0$. This completes the proof.  

\noindent (ii) From the strong convexity of $f$, we have 
$$ \nabla f(y_{k+1})\fyy{^\top}(x-y_{k+1}) +\frac{\mu}{2}\|y_{k+1}-x\|^2\leq f(x) - f(y_{k+1}). $$
From the monotonicity of $F$ and the preceding relation, we have
\begin{align*}
2{\gamma}\left(F(y_{k+1})+ {{\far{\eta_k}}}\nabla f(y_{k+1})\right)\fyy{^\top}\left(  x-y_{k+1}  \right) &\leq 2{\gamma} F(x)\fyy{^\top}\left( x -y_{k+1}\right)\\
& +2\gamma\far{\eta_k} (f(x)-f(y_{k+1})) - {\gamma}\far{\eta_k}\mu\|y_{k+1}-x\|^2.
\end{align*}
We can also write $-\|y_{k+1}-x\|^2 \leq -0.5\|x_k-x\|^2+ \|x_k-y_{k+1}\|^2.$ From \far{equation}~\eqref{eqn:EG_lemma_mm_last} and the preceding two relations, we obtain 
\begin{align*} 
2{\gamma}( F(x)\fyy{^\top}\left(y_{k+1}-x\right) +\far{\eta_k}(f(y_{k+1})-f(x)))    & \leq(1-0.5{\gamma}\far{\eta_k}\mu)\|x_k - x\|^2   -\|x_{k+1} - x\|^2 \\
-(1-{\gamma}\far{\eta_k}\mu-2{\gamma}^2&(L_F^2+\far{\eta_k}^2L^2)) \|x_k-y_{k+1}\|^2.
\end{align*}
But we assumed that $1-{\gamma}\far{\eta_k}\mu-2{\gamma}^2(L_F^2+\far{\eta_k}^2L^2)\geq 0$. This completes the proof.
\subsection{Proof of Lemma~\ref{lem:ave_REG}}\label{sec:proof_lemma_lem:ave_REG}
\far{
We use induction on $K\geq 1$. For $K=1$, we may write $\sum_{k=0}^0 {\lambda_{k,1}} y_{k+1} = \lambda_{0,1}y_{1} = y_{1},$ where we used $\lambda_{0,1}=1$. From Algorithm~\ref{alg:IR-EG-s} and the initialization $\Gamma_0 =0$, we have
$\bar y_{1}:= (\Gamma_0 \bar y_0+\eta_0\theta_0 y_{1})/\Gamma_{1}=(0+\eta_0\theta_0 y_{1})/(\Gamma_{0}+\eta_0\theta_0) = y_{1}.$ From the preceding two relations, the hypothesis statement holds for $K=1$. Next, suppose the relation holds for some $K\geq 1$. From $\Gamma_{K}=\sum_{k=0}^{K-1}\eta_k\theta_k$, we may write
\begin{align*}
	\bar{y}_{K+1} &= \frac{\Gamma_K\bar{y}_K + \eta_K\theta_K y_{K+1}}{\Gamma_{K+1}} 
	 = \frac{\left(\sum_{k=0}^{K-1}\eta_k\theta_k\right)\sum_{k=0}^{K-1} \lambda_{k,K} y_{k+1}+ \eta_K\theta_K y_{K+1}}{\Gamma_{K+1}}\\
	 &= \frac{\sum_{k=0}^{K}\eta_k\theta_k y_{k+1}}{\sum_{j = 0}^{K}\eta_j\theta_j} = \sum_{k=0}^{K}\left(\tfrac{\eta_k\theta_k }{\sum_{j = 0}^{K}\eta_j\theta_j}\right)y_{k+1}= \sum_{k=0}^{K} \lambda_{k,K+1} y_{k+1},
	\end{align*}
implying that the induction hypothesis holds for $K+1$. In view of $\sum_{k=0}^{K-1} \lambda_{k,K}=1$, under the convexity of the set $X$, we have $\bar y_K \in X$.  
}
\subsection{Proof of Theorem~\ref{theorem: H is SC}}\label{sec_app_theorem: H is SC}
\far{ {\bf [Case 1]} First, we show that $\{\eta_k\}$ and  $\gamma$ meet the condition of Theorem~\ref{prop:unify_IR-EG_sm}. For any $k \geq 0$, we have 
\begin{align*}
&{\gamma}^2L_F^2+{\gamma}\eta_k\mu_H+{\gamma}^2\eta_k^2L_H^2 \leq 
 0.25+{\gamma}\eta_0\mu_H+{\gamma}^2\eta_0^2L_H^2 \\
&\leq 0.25 + \frac{1}{\eta_{0,l}} + \frac{L_H^2}{\mu_H^2(5L_H\mu_H^{-1})^2}
\leq  0.25 + 0.2+0.04 \leq 0.5. 
\end{align*}
Next, we estimate the terms $\sum_{k=0}^{K-1}\eta_k\theta_k $ and $\sum_{k=0}^{K-1}\eta_k^2\theta_k $. From the update rule of $\eta_k$, we obtain  
\begin{align*}
 \prod_{t=0}^{k}(1- \gamma \eta_t \mu_H ) =  \prod_{t=0}^{k}\left(1- \frac{1}{t+\eta_{0,l}} \right) = \prod_{t=0}^{k}\left(\frac{t+\eta_{0,l}-1}{t+\eta_{0,l}} \right) = \frac{\eta_{0,l}-1}{k+\eta_{0,l}}. 
\end{align*}
This implies that $\theta_k \triangleq \far{\frac{1}{\prod_{t=0}^{k}(1- \gamma \eta_t \mu_H )}}=\frac{k+\eta_{0,l}}{\eta_{0,l}-1}$, for $k\geq 0$. We obtain 
\begin{align}
& \sum_{k=0}^{K-1}\eta_k \theta_k = \sum_{k=0}^{K-1}\frac{\eta_{0,u}}{\eta_{0,l}-1} = \frac{\eta_{0,u} K}{\eta_{0,l}-1} =\frac{K}{\gamma  (5L_H-\mu_H)} \label{eqn:theta_sums_IREG1}\\ 
\hbox{and} \quad  & \sum_{k=0}^{K-1}\eta_k^2 \theta_k = \sum_{k=0}^{K-1}\frac{\eta_{0,u}^2}{(\eta_{0,l}-1)(k+\eta_{0,l})}\leq \frac{\eta_{0,u}^2}{(\eta_{0,l}-1)}\left(\eta_{0,l}^{-1}+\ln\left(\frac{K+\eta_{0,l}-1}{\eta_{0,l}}\right)\right),\label{eqn:theta_sums_IREG2}
\end{align}
where we invoked~\cite[Lemma 9]{yousefian2017smoothing} in the preceding relation.  \fyy{Note that, in view of \eqref{eqn:theta_sums_IREG1} and \eqref{eqn:theta_sums_IREG2}, the conditions of \cref{prop:unify_IR-EG_sm} (iii) are met, implying that $\{ \bar y_k\}$ converges to the unique solution of problem~\cref{eqn:bilevelVI}.}

\noindent  {\bf (1-i)} This result can be obtained by invoking Theorem~\ref{prop:unify_IR-EG_sm} and utilizing \eqref{eqn:theta_sums_IREG1}. 
 
 \noindent {\bf (1-ii)} This result follows by invoking Theorem~\ref{prop:unify_IR-EG_sm} and utilizing \eqref{eqn:theta_sums_IREG1} and \eqref{eqn:theta_sums_IREG2}. 

\noindent {\bf (1-iii)} This result is obtained by applying Lemma~ \ref{lem:bilevelVI_gap_ws} and using the bound in (1-ii).

\noindent {\bf [Case 2]} First, we show that ${\gamma}^2L_F^2+{\gamma}\eta\mu_H+{\gamma}^2\eta^2L_H^2\leq 0.5$. From $\gamma \leq \frac{1}{2L_F}$ and $\eta := \frac{(p+1)\ln(K)}{ \gamma\mu_H K}$, we have 
\begin{align*}
 {\gamma}^2L_F^2+{\gamma}\eta\mu_H+{\gamma}^2\eta^2L_H^2 &\leq 0.25 + \tfrac{(p+1)\ln(K)}{  K} + \tfrac{(p+1)^2\ln(K)^2L_H^2}{ \mu_H^2 K^2} \leq 0.49 \leq 0.5,
 \end{align*}
where we used ${K}/{\ln(K)} \geq {5(p+1) L_H/\mu_H}$. This implies that the condition of Theorem~\ref{prop:unify_IR-EG_sm} is met.   

\noindent {\bf  (2-i)} Invoking Theorem~\ref{prop:unify_IR-EG_sm} (i), with a constant $\eta$, we have 
\begin{align*}
 \ssrtwo{- B_H\, \mbox{dist}\left(\bar{y}_K,\mbox{SOL}(X,F)\right) \leq } \mbox{Gap}\left(\bar{y}_K,\mbox{SOL}(X,F),H\right) \leq (\eta\gamma)^{-1} D_X^2/\left(\textstyle\sum_{j=0}^{K-1} \theta_j\right)\leq \frac{D_X^2}{ \gamma \eta}  (1- \gamma \eta \mu_H)^K\ssrtwo{,}
\end{align*}
where we used $\sum_{j=0}^{K-1} \theta_j \geq \theta_{K-1}=\frac{1}{(1-\gamma\eta\mu_H)^K}$. Invoking $\eta := \frac{(p+1)\ln(K)}{ \gamma\mu_H K}$, we have
\begin{align*}
&(1-\eta\gamma\mu_H)^K = \left(1-\tfrac{(p+1)\ln(K)}{K}\right)^K
=  \left(\left(1-{(p+1)\ln(K)}/{K}\right)^{\tfrac{K}{(p+1)\ln(K)}}\right)^{\ln(K^{(p+1)})}\\
&\leq \left(\lim_{K\to \infty}\left(1-{(p+1)\ln(K)}/{K}\right)^{\tfrac{K}{(p+1)\ln(K)}}\right)^{\ln(K^{(p+1)})} = e^{{-\ln(K^{(p+1)})}}
= \tfrac{1}{K^{(p+1)}}.
\end{align*}
  The \ssrtwo{bounds} in (2-i) follows from the two preceding relations.  
  
\noindent {\bf  (2-ii)}  Invoking Theorem~\ref{prop:unify_IR-EG_sm} (ii) and $\sum_{j=0}^{K-1} \theta_j \geq \theta_{K-1}=\frac{1}{(1-\gamma\eta\mu_H)^K}$, we obtain $ 0 \leq \mbox{Gap}\left(\bar{y}_K,X,F\right)       \leq \frac{D_X^2}{ \gamma}  (1- \gamma \eta \mu_H)^K+  \sqrt{2} C_HD_X\eta$. Substituting $(1-\eta\gamma\mu_H)^K$ by $\tfrac{1}{K^{(p+1)}}$ and $\eta$ by $\frac{(p+1)\ln(K)}{ \gamma\mu_H K}$, we arrive at (2-ii). 
 
 \noindent {\bf (2-iii)}  This result follows by applying Lemma~ \ref{lem:bilevelVI_gap_ws} and using the bound in (2-ii).



\noindent {\bf [Case 3]}   {\bf (3-i)}  Consider \eqref{ineq:EG_lemma_smonotone}. Substituting $x:=x^*$, for a constant regularization parameter $\eta$, we have
\begin{align*}
2\gamma\left(F(x^*)+\eta H(x^*)\right)\fyy{^\top}\left(y_{k+1}-x^*\right) &\leq   (1-\gamma\eta\mu_H)\|x_k - x^*\|^2 -  \|x_{k+1} -  x^*\|^2.
\end{align*}
In view of $x^* \in X^*_F$ and Definition~\ref{def:weaksharp}, we have $F(x^*)\fyy{^\top}(y_{k+1}-x^*) \geq \alpha\, \mbox{dist}(y_{k+1} ,X^*_F)$. Invoking the Cauchy-Schwarz inequality, we have
\begin{align*} 
H(x^* ) \fyy{^\top}\left(y_{k+1}-x^* \right)   & =   H(x^* ) \fyy{^\top}\left(y_{k+1}- \Pi_{X^*_F}[y_{k+1}]+   \Pi_{X^*_F}[y_{k+1}] -x^* \right)\\
& \geq  -\|H(x^*)\|\|y_{k+1}- \Pi_{X^*_F}[y_{k+1}]\|,
\end{align*}
where we used $ H(x^* ) \fyy{^\top}\left(   \Pi_{X^*_F}[y_{k+1}] -x^* \right) \geq 0$ in view of $ \Pi_{X^*_F}[y_{k+1}] \in X^*_F$ and that $x^*$ solves $\mbox{VI}(X^*_F,H)$.  Thus, we obtain  
\begin{align*} 
2{\gamma}\left(F(x^* )+\eta H(x^* )\right)\fyy{^\top}\left(y_{k+1}-x^* \right)  \geq 2{\gamma} \left( \alpha  - \eta\|H(x^*)\|\right)\mbox{dist}(y_{k+1} ,X^*_F).
\end{align*}
This implies that 
\begin{align*}
2{\gamma} \left( \alpha  - \eta \|H(x^*)\|\right)\, \mbox{dist}(y_{k+1} ,X^*_F)&\leq   (1-\gamma\eta\mu_H)\|x_k - x^*\|^2 -  \|x_{k+1} -  x^*\|^2.
\end{align*}
Multiplying both sides of the preceding inequality by $\theta_k=1/{(1-\gamma \eta\mu_H)}^{k+1}$, for $k\geq 0$, we obtain 
\begin{align*}
2{\gamma}(\alpha- \eta \|H(x^*)\| )\theta_k\, \mbox{dist}(y_{k+1} ,X^*_F) &\leq    \frac{ \|x_k - x^* \|^2}{(1-\gamma \eta\mu_H)^{k}}  - \frac{\|x_{k+1} - x^*\|^2}{ (1-\gamma \eta\mu_H)^{k+1}} .
\end{align*}
Summing the both sides over $k=0,\ldots, K-1$, for $K\geq 1$, we obtain 
\begin{align*}
2{\gamma}(\alpha- \eta \|H(x^*)\| )\textstyle\sum_{k=0}^{K-1}\theta_k \mbox{dist}(y_{k+1} ,X^*_F)&\leq      \|x_0 - x^*\|^2  -  \theta_{K-1}\|x_{K} - x^*\|^2  .
\end{align*}
Invoking the Jensen's inequality, we obtain    
\begin{align*}
  2{\gamma}(\alpha- \eta \|H(x^*)\| ) (\textstyle\sum_{j=0}^{K-1} \theta_j)\mbox{dist}(\bar{y}_K,X^*_F) \leq    \|x_0 - x^*\|^2   .
\end{align*}
Invoking $\eta \leq \frac{\alpha}{2\|H(x^*)\|}$ and $\textstyle\sum_{j=0}^{K-1} \theta_j \geq \theta_{K-1} =\frac{1}{(1-\gamma\eta\mu_H)^K}$, we obtain 
\begin{align*}
  {\gamma}\alpha\, \mbox{dist}(\bar{y}_K,X^*_F) \leq   (1-\gamma \eta \mu_H)^K \|x_0 - x^*\|^2     .
\end{align*}

\noindent  {\bf (3-ii)} From the preceding relation and invoking Lemma~\ref{lem:bilevelVI_gap_ws},  we obtain $$ \mbox{Gap}\left(\bar{y}_K, X^*_F,H\right)   \geq - \tfrac{B_H \|x_0 - x^*\|^2}{{\gamma}\alpha} (1-\gamma \eta \mu_H)^K  . $$  Combining this with Theorem~\ref{prop:unify_IR-EG_sm} (i), we obtain the result in (3-ii). 
}
\far{\subsection{Proof of Corollary~\ref{theorem: H is SC2}}\label{sec:proof_lemma_theorem: H is SC2}
{\bf [Case 1]} The proofs of (1-i) and (1-ii) are analogous to those of Theorem~\ref{theorem: H is SC} (1-i) and (1-ii), and the details are omitted. Instead, we highlight some minor distinctions. 

 \noindent {\bf (1-i)} Consider~\eqref{ineq:EG_lemma_smonotone2}. In the proof of Theorem~\ref{prop:unify_IR-EG_sm} (i), let $x^*_F \in \mbox{SOL}(X,F)$ be the unique optimal solution to problem~\eqref{eqn:optVI}, denoted by $x^*$. Following similar steps in that proof, we obtain the bound in (1-i). 

 \noindent {\bf (1-ii)} The bound in (1-ii) follows from Theorem~\ref{theorem: H is SC} (1-ii), where in view of the difference between~\eqref{ineq:EG_lemma_smonotone} and~\eqref{ineq:EG_lemma_smonotone2} in the coefficient of $\|x_k-x\|^2$, we substitute $\mu_H:=0.5\mu$ and $L_H:=L$. 

 \noindent {\bf (1-iii)} This follows directly by invoking Lemma~ \ref{lem:bilevelVI_gap_ws} and the upper bounds in (1-i) and (1-ii).

 \noindent   {\bf [Case 2]} First, we show that ${\gamma}^2L_F^2+0.5{\gamma}\eta\mu+{\gamma}^2\eta^2L^2\leq 0.5$. We have 
 \begin{align*}
 {\gamma}^2L_F^2+0.5{\gamma}\eta\mu+{\gamma}^2\eta^2L^2 &\leq 0.25 + \tfrac{(p+1)\ln(K)}{  K} + \tfrac{4(p+1)^2\ln(K)^2L^2}{ \mu^2 K^2} \leq 0.39 \leq 0.5,
 \end{align*}
 where we used the assumption ${K}/{\ln(K)} \geq 10(p+1) {L}/{\mu} $.  
 The proofs of (2-i) and (2-ii) are analogous to those of Theorem~\ref{theorem: H is SC} (2-i) and (2-ii), and are omitted. Some minor distinctions are highlighted next. 

 \noindent {\bf (2-i)} Consider~\eqref{ineq:EG_lemma_smonotone2}. In the proof of Theorem~\ref{prop:unify_IR-EG_sm} (i), let $x^*_F \in \mbox{SOL}(X,F)$ be the unique optimal solution to problem~\eqref{eqn:optVI}, denoted by $x^*$. Following similar steps in that proof, we obtain the bound in (2-i). 

 \noindent {\bf (2-ii)} The bound in (2-ii) follows from Theorem~\ref{theorem: H is SC} (2-ii), where in view of the difference between~\eqref{ineq:EG_lemma_smonotone} and~\eqref{ineq:EG_lemma_smonotone2} in the coefficient of $\|x_k-x\|^2$, we substitute $\mu_H:=0.5\mu$ and $L_H:=L$. 

 \noindent {\bf (2-iii)} This follows directly by invoking Lemma~ \ref{lem:bilevelVI_gap_ws} and the upper bounds in (2-i) and (2-ii).

    \noindent   {\bf [Case 3]}   {\bf (3-i)} The proof follows directly from Corollary~\ref{theorem: H is SC2} (3-i) where we substitute $\mu_H:=0.5\mu$.
     
     \noindent {\bf (3-ii)} Consider \eqref{ineq:EG_lemma_smonotone2}. In the proof of Theorem~\ref{prop:unify_IR-EG_sm} (i), let $x^*_F \in \mbox{SOL}(X,F)$ be the unique optimal solution to problem~\eqref{eqn:optVI}, denoted by $x^*$. Following similar steps in that proof, we obtain $f(\bar{y}_K) -f(x^*) \leq \frac{\|x_0-x^*\|^2}{2\eta \gamma \sum_{j=1}^{K-1}\theta_j}$ where $\theta_j = \frac{1}{(1-0.5\gamma\eta\mu)^{j+1}}$ for any $j\geq 0$.
 Using $\sum_{j=1}^{K-1}\theta_j \geq \theta_{K-1}$, we obtain  $f(\bar{y}_K) -f(x^*) \leq \frac{\|x_0-x^*\|^2}{2\eta \gamma}(1-0.5\gamma\eta\mu)^K$. From this bound, Lemma~ \ref{lem:bilevelVI_gap_ws}, and (3-i), we have 
\begin{align*}
\frac{\mu}{2}\|\bar{y}_K-x^*\|^2 &\leq f(\bar{y}_K) -f(x^*)+ \|\nabla f(x^*)\|\,\mbox{dist}(\bar{y}_K,\mbox{SOL}(X,F)) \\
& \leq  \tfrac{\|x_0-x^*\|^2}{2\eta \gamma}(1-0.5\gamma\eta\mu)^K +\|\nabla f(x^*)\|\tfrac{  \|x_0 - x^*\|^2}{{\gamma}\alpha}(1-0.5\gamma \eta \mu )^K.
\end{align*}
Using the assumption $\|\nabla f(x^*)\| \leq \frac{\alpha}{2\eta}$, we obtain the result. 
\subsection{Proof of Lemma~\ref{subsection:error-metric}}\label{sec:proof_lemma_subsection:error-metric}
From Definition~\ref{def:res_map} and~Definition~\ref{def:IPREG_terms}, we have
$$G_{{1}/{\hat\gamma}}(\hat x_k) = \tfrac{1}{\hat\gamma}{\left(\hat x_k -   \Pi_{X^*_F}\left[\hat x_k - \hat\gamma \nabla{f\left(\hat x_k\right)}\right] \right)} = \tfrac{1}{\hat\gamma}{\left(\hat x_k - {\hat x_{k+1} + \delta_k}\right)} .$$
The result follows by invoking $\|u+v\|^2 \leq 2\|u\|^2+2\|v\|^2$ for all $u,v \in \mathbb{R}^n$.
}
\subsection{Proof of Proposition~\ref{prop:upper_bound_gradient}}\label{sec:proof_lemma_prop:upper_bound_gradient}
Note that $y:=\Pi_{{X^*_F}}[\hat{x}_k] \in {X^*_F}$. In view of this, from Lemma~\ref{lem:Projection theorm}, for any $k\geq 0$ we have  
\begin{equation*}
   \left( \Pi_{X^*_F}[z_k] - \Pi_{X^*_F}[\hat x_k]\right) \fyy{^\top}  \left(z_k- \Pi_{X^*_F}[z_k]\right) \geq 0.
\end{equation*}
Recall that $\delta _k =\hat x_{k+1} -\Pi_{X^*_F}[z_k]  $ and
$e_k = \hat x_k - \Pi_{X^*_F}[\hat x_k]$. We obtain
\begin{equation*}
\left(\left(\hat x_{k+1} - \delta_k\right) -\left(\hat x_k - e_k\right) \right)\fyy{^\top} \left( z_k - \left(\hat x_{k+1} - \delta_k \right)\right) \geq 0.
\end{equation*}
From the preceding inequality, we have
\begin{equation}
\left(\hat x_{k+1} - \hat x_{k}\right) \fyy{^\top} \left(z_k - \hat x_{k+1}\right) + \left(\hat x_{k+1} - \hat x_{k}\right) \fyy{^\top} \delta_k +   \left(e_k - \delta_k\right) \fyy{^\top} \left(z_k - \hat x_{k+1} + \delta_k\right)
\geq 0.
\end{equation}
From Algorithm~\ref{alg:ncvx-IR-GD}, we have that $z_k = \hat x_k - \hat{\gamma} \nabla f(\hat x_k)$. We obtain
\begin{align*}
&\underbrace{\left(\hat x_{k+1} - \hat x_{k}\right) \fyy{^\top} \left(\hat x_k - \hat{\gamma} \nabla f(\hat x_k) - \hat x_{k+1}\right) }_\text{Term 1} + \underbrace{\left(\hat x_{k+1} - \hat x_{k}\right) \fyy{^\top} \delta_k}_\text{Term 2} \\
&+  \underbrace{  \left(e_k - \delta_k\right) \fyy{^\top} \left(\hat x_k - \hat{\gamma} \nabla f(\hat x_k) - \hat x_{k+1}\right) }_\text{Term 3} +  \underbrace{\left( e_k -\delta_k \right) \fyy{^\top}  \delta_k}_\text{Term 4}
\geq 0.
\end{align*}
Here we expand Term~1, apply the inequality $u\fyy{^\top} v \leq \frac{1}{2\theta} \|u\|^2 + \frac{\theta}{2} \|v\|^2$ in bounding Term~2 and  Term~3, for some arbitrary $\theta>0$, and apply the inequality $u\fyy{^\top} v \leq \frac{1}{2} \|u\|^2 + \frac{1}{2} \|v\|^2$ in bounding Term~4. We have
\begin{align*}
&\underbrace{-\hat{\gamma}\nabla f(\hat x_k)\fyy{^\top}(\hat x_{k+1}-\hat x_{k}) - \|\hat x_{k+1}-\hat x_k\|^2}_\text{Term 1} 
 +\underbrace{\frac{1}{2\theta} \|\hat x_{k+1} - \hat x_k\|^2 +\frac{\theta}{2}  \|\delta_k\|^2}_{  \text{Term 2} \leq}  
\\ &
 +\underbrace{\theta \| e_k-\delta_k \|^2 + \frac{1}{2\theta}\|\hat x_{k+1} - \hat x_k\|^2 + \frac{\hat{\gamma}^2}{2\theta}\|\nabla f(\hat x_k)\|^2   }_{  \text{Term 3} \leq} 
+ \underbrace{\frac{1}{2}\| e_k -\delta_k\|^2 + \frac{1}{2}\|\delta_k\|^2 }_{  \text{Term 4} \leq}
\geq 0.
\end{align*}
Hence, we have 
\begin{align*}
 \hat \gamma\nabla f(\hat x_k)\fyy{^\top}(\hat x_{k+1}-\hat x_{k}) &\leq \left(\theta^{-1} -1 \right)\|\hat x_{k+1}-\hat x_k\|^2+ \frac{\hat\gamma^2}{2\theta } \| \nabla f(\hat x_k)\|^2 + 0.5\left( \theta  +1 \right)  \| \delta _k\|^2 \\
 & + \left(\theta +0.5 \right ) ( 2\| e_k \|^2+2\| \delta _k\|^2).
\end{align*}
Let us choose {$\theta := \frac{2}{\hat\gamma L }$}. Note that from the assumption $\hat\gamma\leq \tfrac{1}{2L}$, we have $\theta\geq 4$. Before we substitute $\theta$, using $\theta\geq 4$ we obtain
\begin{align*}
 \hat \gamma\nabla f(\hat x_k)\fyy{^\top}(\hat x_{k+1}-\hat x_{k}) &\leq \left(\theta^{-1} -1 \right)\|\hat x_{k+1}-\hat x_k\|^2+ \frac{\hat\gamma^2C_f^2}{2\theta }   + 5\theta ( \| \delta _k\|^2  + \| e_k \|^2) .
\end{align*}
Dividing both sides by $\hat{\gamma}$ and substituting $\theta := \frac{2}{\hat\gamma L }$ we obtain
\begin{align*}
\nabla f(\hat x_k)\fyy{^\top}(\hat x_{k+1}-\hat x_{k}) \leq
\left(\tfrac{L}{2}-\tfrac{1}{\hat{\gamma}}\right  )\|\hat x_{k+1}-\hat x_k\|^2+ \tfrac{L\hat\gamma^2C_f^2}{4 }  +\tfrac{10}{L\hat{\gamma}^2} ( \| \delta _k\|^2  + \| e_k \|^2).
\end{align*}
From the $L$-smoothness property of $f$, we have 
\begin{equation*}
f(\hat x_{k+1}) \leq f(\hat x_k) + \nabla f(\hat x_k)\fyy{^\top} (\hat x_{k+1} - \hat x_k) + \tfrac{L}{2} \|\hat x_{k+1} - \hat x_k\|^2.
\end{equation*}
From the two preceding relations, we obtain
%
%
%
%
\begin{equation*}
\left(\tfrac{1}{\hat{\gamma}}-L\right) \|\hat x_{k+1}-\hat x_k\|^2 \leq f(\hat x_k) - f(\hat x_{k+1})+ \tfrac{L\hat\gamma^2C_f^2}{4 } +\tfrac{10}{L\hat{\gamma}^2} ( \| \delta _k\|^2  + \| e_k \|^2).
\end{equation*}
Note that $\tfrac{1}{2\hat{\gamma}} \leq \left(\tfrac{1}{\hat{\gamma}}-L\right) $. Utilizing this bound, then adding $\tfrac{1}{2\hat{\gamma}} \| \delta_k\|^2$ to  both sides, and invoking Lemma~\ref{subsection:error-metric}, we obtain 
\begin{equation*}
 \tfrac{\hat\gamma}{4}  \| G_{{1}/{\hat\gamma}}(\hat x_k) \|^2  \leq f(\hat x_k) - f(\hat x_{k+1})+ \tfrac{L\hat\gamma^2C_f^2}{4 } +\tfrac{20}{L\hat{\gamma}^2} ( \| \delta _k\|^2  + \| e_k \|^2),
\end{equation*}
where we used $\tfrac{1}{2\hat{\gamma}} \leq \tfrac{10}{L\hat{\gamma}^2} $ in view of the assumption $\hat\gamma\leq \tfrac{1}{2L}$.
%
%
%
%
%
\subsection{Proof of Proposition~\ref{Upper bound for e_k}}\label{sec_app_Upper bound for e_k}
\noindent \far{(1-i)} Invoking $\|e_k\| = \mbox{dist}(\hat{x}_k,\mbox{SOL}(X,F))$ and Lemma~\ref{lem:bilevelVI_gap_ws}, we have
\begin{align}\label{eqn:e_k_gap}
\|e_k\| \leq \ses{\sqrt[\mathcal{M}]{\left(\alpha^{-1}\mbox{Gap}(\hat{x}_k,X,F)\right)}}, \qquad \hbox{for all } k\geq 0.
\end{align} 
From Algorithm~\ref{alg:ncvx-IR-GD}, we have $\hat{x}_k=\bar y_{{k-1}, T_{k-1}}$ for $k \geq 1$, where $\bar y_{{k-1}, T_{k-1}}$ is the output of \far{IR-EG$_{{\texttt{s,m}}}$} after $T_{k-1}$ number of iterations employed for solving the projection problem \far{$\min_{x \in X^*_F} \ \tfrac{1}{2}\|x-z_k\|^2$. Consider the projection problem let} $k\geq 1$ be fixed. We apply Corollary~\ref{theorem: H is SC2} (case 2) where we choose $p:=2$ and $K:=T_{k-1}$. Note that the condition $\frac{T_{k-1}}{\ln(T_{k-1})} \geq 10(p+1)L/\mu=30$ is met as $L=\mu=1$ for the objective function of the projection problem and  $T_{k-1} \geq 151$ (see Algorithm~\ref{alg:ncvx-IR-GD}). Let us define $h(x) \triangleq \tfrac{1}{2}\|x-z_k\|^2$. Therefore, we have 
$${\mbox{Gap}}\left(\bar y_{{k-1}, T_{k-1}},X,F\right)       \leq\left(\tfrac{D_X^2}{\gamma}\right)\tfrac{1}{ T_{k-1}^3}+\left(\tfrac{6\sqrt{2} C_h  D_X}{ \gamma } \right)\tfrac{\ln(T_{k-1})}{T_{k-1}}, \qquad \hbox{for all } k\geq 1,$$
where $C_h \triangleq \sup_{x \in X}\|\nabla h(x)\|$. Recalling $z_k \triangleq \hat x_k - \hat{\gamma} \nabla f(\hat x_k)$, we have 
$$C_h = \sup_{x \in X} \|x-z_k\| \leq  \sup_{x \in X} \|x-\hat{x}_k\| + \hat{\gamma}\| \nabla f(\hat x_k)\| \leq  \sqrt{2}D_X + \hat{\gamma}C_f.$$ From the preceding relation and the equation~\eqref{eqn:e_k_gap}, we have for $k \geq 1$ 
\ses{\begin{align*}
\|e_k\|  &\leq \sqrt[\mathcal{M}]{ \alpha^{-1}\left(
 \tfrac{D_X^2}{\gamma T_{k-1}^3} +\tfrac{\left(12D_X^2+6\sqrt{2}C_fD_X\hat{\gamma}\right)\ln(T_{k-1})}{\gamma T_{k-1}} \right)  }\\&
\leq 
\sqrt[\mathcal{M}]{
\left(\tfrac{12.5 D_X^2+6\sqrt{2}C_fD_X\hat{\gamma}}{\alpha\gamma}\right) \tfrac{\ln(T_{k-1})}{T_{k-1}}
}
,
\end{align*}}  where we used $\tfrac{1}{ T_{k-1}^3}\leq\tfrac{0.5\ln(T_{k-1})}{T_{k-1}}$ in view of $T_{k-1} \geq 151$. Next, we relate the preceding bound to $T_k$ (This is done to simplify the presentation of the next results). Note that from \ses{$T_k:=\max\{k^{1.5 \mathcal{M}},151\}$},  $\frac{\ln(T_{k-1})}{T_{k-1}} \leq 2\frac{\ln(T_{k-1})}{T_{k}} \leq 2\frac{\ln(T_{k})}{T_{k}}$ for all $k \geq 1$.


\noindent \far{(1-ii)} Let us define $\far{y^*_{k,h}}\triangleq \Pi_{X^*_F}[z_k]$. Invoking Corollary~\ref{theorem: H is SC2} for the projection problem (where $L=\mu=1$, $p=2$) and using the bound in part (i), we obtain
\begin{align}\label{bound_delta_in_proof_lemma}
 \|\bar{y}_{k,T_{k}}-\far{y^*_{k,h}}\|^2 &\leq  \left(\tfrac{2 D_X^2}{ 3  }\right)\tfrac{1}{\ln(T_{k})T_{k}^2}    +   2\|\nabla h(\far{y^*_{k,h}})\| \ses{
\sqrt[\mathcal{M}]{
 \left(\tfrac{25 D_X^2+17C_fD_X\hat{\gamma}}{\alpha\gamma}\right) \tfrac{\ln(T_{k})}{T_{k}} 
} 
}.
\end{align}
 Note that $\bar{y}_{k,T_{k}}-\far{y^*_{k,h}} = \hat x_{k+1} -\Pi_{X^*_F}[z_k] =\delta_k$ and $\nabla h(\far{y^*_{k,h}})=\far{\far{y^*_{k,h}}-z_k}$.  Next, we derive a bound on $\|\far{y^*_{k,h}}-z_k\|$. Using $z_k=\hat{x}_k -\hat{\gamma}\nabla f(\hat{x}_k)$, we have
\begin{align*}
  \|z_k - \far{y^*_{k,h}}\|  &= \| z_k - \Pi_{X^*_F}[z_k] \|  \leq \| z_k - \Pi_{X^*_F}[\hat{x}_k] \| = \| \hat{x}_k -\hat{\gamma}\nabla f(\hat{x}_k) - \Pi_{X^*_F}[\hat{x}_k] \| \\
& \leq \| \hat{x}_k - \Pi_{X^*_F}[\hat{x}_k] \|+\hat{\gamma}\|\nabla f(\hat{x}_k)\|  \leq \|e_k\| + \hat{\gamma} C_f,
\end{align*}
where we used Definition~\ref{def:IPREG_terms} and  $\hat{x}_k \in X$. This is because $\hat{x}_k = y_{k-1,T_{k-1}}$ and that, from Lemma~\ref{lem:ave_REG},  $y_{k-1,T_{k-1}} \in X$. Thus, from \eqref{bound_delta_in_proof_lemma} we have 
\begin{align*} 
 \|\delta_k \|^2 &\leq  \left(\tfrac{ 2D_X^2}{ 3  }\right)\tfrac{1}{\ln(T_{k})T_{k}^2}    + 2 (\|e_k\| + \hat{\gamma} C_f) \ses{\sqrt[\mathcal{M}]{
 \left(\tfrac{25 D_X^2+17C_fD_X\hat{\gamma}}{\alpha\gamma}\right) \tfrac{\ln(T_{k})}{T_{k}} 
} }.
\end{align*}
The result is obtained from part (i) and  $\tfrac{1}{\ln(T_{k})T_{k}^2} \leq \tfrac{(\ln(T_{k}))^2}{T_{k}^2} $ for $k \geq 1$.

\noindent  {(2-i)} \far{We consider applying Corollary~\ref{theorem: H is SC2} (case 3) where we choose $K:=T_{k-1}$.  We first show that the condition $\eta \leq \frac{\alpha}{2\|\nabla h(y^*_{k,h})\|}$ is met for all $k\geq 1$ for Algorithm~\ref{alg:ncvx-IR-GD}. Note that $\nabla h(y^*_{k,h})=y^*_{k,h}-z_k$.  Next, we derive a bound on $\|y^*_{k,h}-z_k\|$. Using $z_k=\hat{x}_k -\hat{\gamma}\nabla f(\hat{x}_k)$, we have
\begin{align*}
  \|z_k - y^*_{k,h}\|  &= \| z_k - \Pi_{X^*_F}[z_k] \|  \leq \| z_k - \Pi_{X^*_F}[\hat{x}_k] \| = \| \hat{x}_k -\hat{\gamma}\nabla f(\hat{x}_k) - \Pi_{X^*_F}[\hat{x}_k] \| \\
& \leq \| \hat{x}_k - \Pi_{X^*_F}[\hat{x}_k] \|+\hat{\gamma}\|\nabla f(\hat{x}_k)\|  \leq \sqrt{2}D_X+  \tfrac{C_f}{2L},
\end{align*}
where we used Definition~\ref{def:IPREG_terms}, $\hat{\gamma} \leq \frac{1}{2L}$, and  $\hat{x}_k \in X$.  This implies due to $\eta \leq \frac{\alpha L}{2\sqrt{2}D_X L+  C_f}$, the condition $\eta \leq \frac{\alpha}{2\|\nabla h(y^*_{k,h})\|}$ is met for all $k\geq 1$.  Thus,  from Corollary~\ref{theorem: H is SC2} we have 
$$\|e_k\| = \mbox{dist}(\bar y_{{k-1}, T_{k-1}},X^*_F) \leq  \tfrac{ 2D_X^2}{{\gamma}\alpha}(1-0.5\gamma \eta  )^{T_{k-1}}\qquad \hbox{for all } k\geq 1.$$

\noindent {(2-ii)} Note that $\bar{y}_{k,T_{k}}-y^*_{k,h} = \hat x_{k+1} -\Pi_{X^*_F}[z_k] =\delta_k$.   Invoking Corollary~\ref{theorem: H is SC2} we have $\|\delta_k\|^2 = \|\bar{y}_{k,T_{k}}-y^*_{k,h}\|^2 \leq \tfrac{4 D_X^2}{  \gamma \eta}(1- 0.5\gamma \eta  )^{T_k}.$} 

\far{\subsection {Proof of Lemma~\ref{num-proof-mon-F}}\label{sec:proof_lemma_num-proof-mon-F}
It suffices to show that for all $x,y \in \mathbb{R}^{29}_+$ we have
$y\fyy{^\top} \left(\frac{J(x) + J\fyy{^\top}(x)}{2}\right) y \geq 0,$ where $J$ denotes the Jacobian of mapping $F$ ($ J_{i,j} = \frac{\partial F_i}{\partial x_j} $). Recall that $\Omega  \in \mathbb{R}^{4 \times 25}$ and $\Delta \in \mathbb{R}^{19 \times 25}$. Let
$\delta_a \in \mathbb{R}^{25 \times 1}$ denote a column vector of entries of the $a$th row of $\Delta$ for $a= 1,\ldots, 19$. 
Also, let $\Delta_p  \in \mathbb{R}^{19 \times 1}$ denote the $p$th column of $\Delta$ and $\Delta_{a,p}$ denote the $(a,p)$ entry of $\Delta$, for $a= 1 ,\ldots, 19$ and $p=1, \ldots,25$. Let us define the function $g_a(z)$ as $g_a(z) \triangleq t_a^0 (1+ 0.15 (\frac{z}{cap_a})^{n_a})$ for $z \geq 0$. We have 
\begin{align*} 
C(h)&= \Delta\fyy{^\top} \begin{bmatrix} g_1(\delta_1\fyy{^\top}h);  \ldots;  g_{19}(\delta_{19}\fyy{^\top}h)\end{bmatrix}\\
& =\begin{bmatrix}  \Delta_1\fyy{^\top} \begin{bmatrix}    g_1(\delta_1\fyy{^\top}h);\ldots;  g_{19}(\delta_{19}\fyy{^\top}h)\end{bmatrix} ; \ldots; \Delta_{25}\fyy{^\top}  \begin{bmatrix}  g_1(\delta_1\fyy{^\top}h);\ldots;  g_{19}(\delta_{19}\fyy{^\top}h)\end{bmatrix}\end{bmatrix} \\
&= \begin{bmatrix} \sum_{a=1}^{19}\Delta_{a,1} g_a(\delta_a\fyy{^\top} h)  ;\ldots;  \sum_{a=1}^{19}\Delta_{a,25} g_a(\delta_a\fyy{^\top} h)\end{bmatrix}.
\end{align*}
By invoking the chain rule of calculus and $\nabla_z g_a(z) = 0.15\, t_a^0 \, n_a \, \frac{z^{n_a-1}}{cap_a^{n_a}}$, we obtain
\begin{align*}
\nabla_h \left(\sum_{a=1}^{19}\Delta_{a,p} g_a(\delta_a\fyy{^\top} h)\right)= 0.15\,\sum_{a=1}^{19}\Delta_{a,p}  \, t_a^0 \, n_a \, \frac{(\delta_a\fyy{^\top}h)^{n_a-1}}{cap_a^{n_a}}\delta_a, \qquad \hbox{for all } p = 1, \ldots,25.
\end{align*}
Therefore, we obtain
\[
J(x) = \left[
\begin{array}{c|c}
    \begin{matrix}
   \sum_{a=1}^{19} \frac{0.15\, \Delta_{a,1}  \, t_a^0 \, n_a \, (\delta_a\fyy{^\top} h)^{n_a-1}}{cap_a^{n_a}}\delta_a, \ldots ,  \sum_{a=1}^{19} \frac{0.15\,\Delta_{a,25}  \, t_a^0 \, n_a \, (\delta_a\fyy{^\top} h)^{n_a-1}}{cap_a^{n_a}}\delta_a \\
    \end{matrix} & \Omega\fyy{^\top} \\
    \hline
    -\Omega & 0 
\end{array}
\right].
\]
Let us define $o_{a,p} \in \mathbb{R}_+$ such that $o_{a,p} =0.15 \, \Delta_{a,p} \, t_a^0 \, n_a \, \frac{(\delta_a\fyy{^\top} h)^{n_a-1}}{cap_a^{n_a}} $ for $a=1, \ldots, 19$ and $p=1, \ldots, 25$. Also, let $O_p\in \mathbb{R}_+^{19 \times 1}$ such that $o_{a,p}$ denotes the $a$th entry of $O_p$. Therefore, we obtain 
$$\frac{J(x)+J\fyy{^\top}(x)}{2} = \frac{1}{2} \, \left[
\begin{array}{c|c}
    \begin{matrix}
   \Delta\fyy{^\top} \begin{bmatrix} O_1 \cdots O_{25}  \end{bmatrix}+ \begin{bmatrix} O_1 \cdots O_{25}  \end{bmatrix}\fyy{^\top} \Delta \\
    \end{matrix} & 0_{25 \times 4} \\
    \hline
    0_{4 \times 25} & 0_{4 \times 4}
\end{array}
\right]_{29 \times 29} .$$
Note that the entries of $\Delta$ and $O_p$ for all $p=1, \ldots, 25$ are nonnegative. Thus, for all  $x,y \in \mathbb{R}^{29}_+$, $y\fyy{^\top} (\frac{J(x) + J\fyy{^\top}(x)}{2}) y \geq 0,$ which implies monotonicity of $F$ for any arbitrary $n_a  \geq  0$ for $a=1, \ldots, 19.$
}
\far{\subsection {Proof of Lemma~\ref{num-proof-cvx-f}}\label{sec:proof_lemma_num-proof-cvx-f}
 Let
$\delta_a \in \mathbb{R}^{25 \times 1}$ denote a column vector of entries of the $a$th row of $\Delta$ for $a= 1,\ldots, 19$, and and $\delta_{a,p}$ denote the $p$th entry of vector $\delta_p$, for $p=1,\ldots,25.$ Let us define the function $g_a(z)$ as $g_a(z) \triangleq t_a^0 (1+ 0.15 (\frac{z}{cap_a})^{n_a})$ for $z \geq 0$. Note that $c_a(\Delta h) = g_{a}(\delta_{a}\fyy{^\top}h)$. We obtain 
\begin{align*}
f(x) &= \mathbf{1}_{25}\fyy{^\top} C(h) =  (\Delta \mathbf{1}_{25} )\fyy{^\top} \begin{bmatrix} g_1(\delta_1\fyy{^\top}h);\ldots;  g_{19}(\delta_{19}\fyy{^\top}h)\end{bmatrix}  \\
& = \sum_{a=1}^{19} (\Delta\mathbf{1}_{25} )_a g_a(\delta_a\fyy{^\top}h) = \sum_{a=1}^{19}  (\delta_a\fyy{^\top}\mathbf{1}_{25})    g_a(\delta_a\fyy{^\top}h).
\end{align*}
Note that $\nabla_z g_a(z) = 0.15\, t_a^0 \, n_a \, \frac{z^{n_a-1}}{cap_a^{n_a}}$. Let us define $\beta_a=   \frac{(\delta_a\fyy{^\top} \mathbf{1}_{25})    n_a  t_a^0  }{cap_a^{n_a}}$. We obtain
\begin{align*}&\nabla_x  f(x)  = \begin{bmatrix}
0.15\sum_{a=1}^{19}  (\delta_a\fyy{^\top} \mathbf{1}_{25})    n_a  t_a^0     \frac{(\delta_a\fyy{^\top}h)^{n_a-1}}{cap_a^{n_a}}\ \delta_a\\ 0_{4\times 1}\end{bmatrix}= \begin{bmatrix}
0.15\sum_{a=1}^{19}   \beta_a   (\delta_a\fyy{^\top}h)^{n_a-1}\ \delta_a\\ 0_{4\times 1}\end{bmatrix}.
\end{align*}
Note that $ \nabla^2_x f(x) =
\left[ \nabla^2_h f(x) , 0_{25 \times 4} ;
    0_{4 \times 25} , 0_{4 \times 4}
\right]$. To show that $f$ is convex, it suffices to show that $h\fyy{^\top}\nabla^2_h f(x)h \geq 0$ for all $h \in \mathbb{R}^{25}_+$. We have 
\begin{align*}&\nabla_h  f(x)  =\begin{bmatrix}
0.15\sum_{a=1}^{19}   \beta_{a}   (\delta_a\fyy{^\top}h)^{n_a-1}\ \delta_{a,1} ;\ldots; 0.15\sum_{a=1}^{19}   \beta_a   (\delta_a\fyy{^\top}h)^{n_a-1}\ \delta_{a,25} \end{bmatrix}.
\end{align*}
The gradient of each entry of $\nabla_h  f(x)$ with respect to $h$ is as follows, for   $p=1,\ldots,25$, 
$$\nabla_h  \left(0.15\sum_{a=1}^{19}   \beta_{a}  (\delta_a\fyy{^\top}h)^{n_a-1}\ \delta_{a,p}\right) = 0.15\sum_{a=1}^{19}   \beta_{a} (n_a -1) (\delta_a\fyy{^\top}h)^{n_a-2} \delta_{a,p}  \delta_a.$$
Therefore, we obtain
\begin{align*}
&\nabla^2_h  f(x)  =0.15\sum_{a=1}^{19}   \beta_{a} (n_a -1) (\delta_a\fyy{^\top}h)^{n_a-2}\begin{bmatrix} 
 \begin{bmatrix}
  \delta_{a,1}  \delta_a \end{bmatrix}_{25 \times 1}   \cdots    \begin{bmatrix}  \delta_{a,25}  \delta_a  \end{bmatrix}_{25 \times 1}  \end{bmatrix}\\
  &=0.15\sum_{a=1}^{19}   \beta_{a} (n_a -1) (\delta_a\fyy{^\top}h)^{n_a-2}(\delta_a \delta_a\fyy{^\top})\fyy{^\top}=0.15\sum_{a=1}^{19}   \beta_{a} (n_a -1) (\delta_a\fyy{^\top}h)^{n_a-2}(\delta_a \delta_a\fyy{^\top}) .
\end{align*}
For any given $h \in \mathbb{R}^{25\times 1}_+$, we obtain
\begin{align*}
&h\fyy{^\top} \nabla^2_h f(x) h =  0.15\sum_{a=1}^{19}   \beta_{a} (n_a -1) (\delta_a\fyy{^\top}h)^{n_a-2} \|\delta_a\fyy{^\top} h \|^2 \geq 0,
\end{align*} 
where we used $n_a \geq 1$ and $\delta_a \geq 0$, for all $a$. This completes the proof. 
}
\end{document}